\newtheorem{theorem}{Theorem}[section]
\newtheorem{proposition}[theorem]{Proposition}
\newtheorem{corollary}[theorem]{Corollary}
\newtheorem{lemma}[theorem]{Lemma}
\theoremstyle{definition}
\newtheorem{definition}[theorem]{Definition}
\newcommand{\ii}{\mathrm{i}}
\title{Weighted $L^p$ Estimates for the Bergman and Szeg\H{o} Projections on Strongly Pseudoconvex Domains with Near Minimal Smoothness}
\author{
Nathan A. Wagner\footnote{Supported by NSF GRF, grant number DGE-1745038}\\
Department of Mathematics and Statistics\\
Washington University in St. Louis\\
{\tt nathanawagner@wustl.edu}\vspace{3mm}\\
Brett D. Wick \footnote{Supported by NSF grants DMS-1800057 and DMS-1560955, as well as ARC DP190100970.}\\
Department of Mathematics and Statistics\\
Washington University in St. Louis\\
{\tt wick@math.wustl.edu}\\
}
\begin{document}

\maketitle

\begin{abstract} We prove the weighted $L^p$ regularity of the ordinary Bergman and Cauchy-Szeg\H{o} projections on strongly pseudoconvex domains $D$ in $\mathbb{C}^n$ with near minimal smoothness for appropriate generalizations of the $B_p/A_p$ classes. In particular, the $B_p/A_p$ Muckenhoupt type condition is expressed relative to balls in a quasi-metric that arises as a space of homogeneous type on either the interior or the boundary of the domain $D$.

\end{abstract}

\vspace{10pt}

\section{Introduction}\label{theproblem}
\subsection{The Problem}
Let $D$ be a bounded, strongly pseudoconvex domain in $\mathbb{C}^n$ with sufficiently smooth (say $C^k$, with $k \geq 2$) boundary, and let $bD$ denote the boundary of $D$. Thus, there is a real-valued, $C^k$, and strictly plurisubharmonic defining function $\rho$ such that $D=\{z: \rho(z)<0\}$ and $\nabla\rho \neq 0$ on $bD$. Recall that the Bergman projection is the orthogonal projection $\mathcal{B}: L^2(D) \rightarrow A^2(D)$, where $A^2(D)$ is the Bergman space, which is the space consisting of those holomorphic functions on $D$ which are square-integrable. Similarly, we recall that the Cauchy-Szeg\H{o} (or simply Szeg\H{o}) projection $\mathcal{S}: L^2(bD)\rightarrow H^2(bD)$ is the orthogonal projection of $L^2(bD)$ onto the holomorphic Hardy space $H^2(bD)$. We define $H^2(bD)$ to be the following closure in $L^2(bD)$:
$$H^2(bD):= \overline{\{f \in L^2(bD): f=F \lvert_{bD}, F \in \text{Hol}(D) \hspace{0.2 cm} \text{and} \hspace{0.2 cm} F \in C^0(\overline{D})\}}.$$

It is of interest to determine when the Bergman and Szeg\H{o} projections extend to bounded operators on $L^p.$ In the simpler case that $D$ has smooth ($C^\infty$) boundary, it has been been known for decades that $\mathcal{B}$ and $\mathcal{S}$ extend to bounded operators  on $L^p$ if $1<p<\infty$ because direct estimates can be obtained on the Bergman and Szeg\H{o} kernels in both cases (see \cite{PS}). If the domain is less regular, a more indirect approach is needed because it is hard to obtain direct estimates on the kernels. Kerzman and Stein around the same time developed a powerful idea that allowed them to relate the Szeg\H{o} projection $\mathcal{S}$ to a ``Cauchy" integral operator $\mathcal{C}$ via an operator equation (see \cite{KS1,KS2} for the one variable and several variable cases, respectively). The essential idea, exploited in \cite{KS1,KS2} as well as numerous other papers in the literature, involves constructing  an auxiliary operator $\mathbf{C}$ that also produces and reproduces holomorphic functions inside $D$ from boundary data, and defining $\mathcal{C}$ to be a restriction of $\mathbf{C}$ to the boundary in an appropriate sense, so that $\mathcal{C}$ is a singular integral operator. This operator $\mathbf{C}$ is given as a sum, $\mathbf{C}_1+\mathbf{C}_2$, where $\mathbf{C}_1$ is constructed using the theory of Cauchy-Fantappi\'e integrals and $\mathbf{C}_2$ is a correction term obtained by solving a $\overline{\partial}$ problem on a strongly pseudoconvex, smoothly bounded domain that contains $\overline{D}$ (see, for example, \cite{KS2,LS1,R}). Importantly, $\mathbf{C}_1$ has a completely explicit kernel. The operator $\mathcal{C}^*-\mathcal{C}$ then roughly measures the ``error" introduced by considering $\mathcal{C}$ instead of $\mathcal{S}.$\footnote{Throughout this paper, we use the symbol $*$ to denote the adjoint of an operator on $L^2(bD)$. Importantly, the adjoint is taken on the \emph{unweighted} Lebesgue space.} A similar trick can be employed for the Bergman projection.
 
Since (Levi) pseudoconvexity is formulated in terms of second derivatives, if we are to restrict our attention to strongly pseudoconvex domains (which are the domains on which the above auxiliary operators can be constructed), we must at minimum assume that the boundary of our domain $D$ is $C^2$ for these questions to make sense. By passing through these auxiliary operators, which have ``non-canonical" kernels and are constructed using the theory of holomorphic integral representations, it is possible to obtain boundedness properties for the operators $\mathcal{B}$ and $\mathcal{S}$ without relying on explicit bounds for the Bergman or Szeg\H{o} kernels. This indirect approach was employed by Lanzani and Stein in \cite{LS1,LS2} to study these problems in the case that $D$ has $C^2$ boundary. These results are the best possible on strongly pseudoconvex domains. 

In harmonic analysis, it is very common to consider the boundedness of integral operators on weighted spaces. The consideration of these problems goes back to the formulation of the $A^p$ condition for the Hilbert transform by Hunt, Muckenhoupt, and Wheeden, see \cite{HMW}.  In the context of the Szeg\H{o} and Bergman projections, there seems to be two distinct questions that one could ask. One could consider the \emph{weighted} Szeg\H{o} (respectively Bergman) projection, which is the projection from $L^2_\sigma(bD)$ to $H^2_\sigma(bD)$, where $\sigma$ is a weight, and try to determine for which weights this projection is bounded on $L^p(bD)$ or $L^p_\sigma(bD)$. Alternately, one could consider the \emph{ordinary} Szeg\H{o} (Bergman) projection acting as an operator on weighted spaces $L^p_\sigma(bD)$. It is the latter question we address in this paper.

The main results in the literature pertaining to the boundedness of the Bergman projection on weighted spaces are due to B\'{e}koll\`{e} and Bonami and consider the underlying domain to be the unit ball $\mathbb{B}_n$ \cite{Bek1,Bek2}. The correct condition for the weights, which turns out to be both necessary and sufficient for boundedness on $L^p_{\sigma}(\mathbb{B}_n)$, is referred to as the  B\'{e}koll\`{e}-Bonami, or $B_p$, condition. This weight class is defined using a Muckenhoupt-type condition, but it is slightly altered to reflect the fact that the behavior of the weight away from the boundary is not important. The correct generalization of their condition for an arbitrary domain $D$ is as follows: we say that a weight $\sigma$ belongs to the class $B_p$ if 

$$\sup_{B(w,R); R>d(w,bD)} \left(\frac{1}{V(B(w,R))}\int_{B(w,R)} \sigma \mathop{dV}\right)\left(\frac{1}{V(B(w,R))}\int_{B(w,R)} \sigma^{-1/(p-1)} \mathop{dV}\right)^{p-1}<\infty.$$
Here $V$ refers to Euclidean volume measure. The balls $B(w,R)$ are taken in a quasi-metric that is defined in the interior of the domain $D$ that reflects the boundary geometry.

Considering the Szeg\H{o} projection, there appear to be few weighted results that appear explicitly in the literature. However, from a heuristic point of view, since the Szeg\H{o} projection involves integration on the boundary and is a true singular integral, the correct class of weights should be an adaptation of the $A_p$ Muckenhoupt class in Euclidean harmonic analysis. Therefore, the correct weight condition for the Szeg\H{o} projection to be bounded on $L^p_\sigma(bD)$ should be for $\sigma$ to belong to an $A_p$ class on the boundary, where the non-isotropic boundary ``balls"   reflect the geometry of the domain. In other words, we consider weights $\sigma$ where the following quantity is finite:

$$\sup_{B \subset bD} \left(\frac{1}{\mu(B)}\int_{B}\sigma\mathop{d\mu}\right)\left(\frac{1}{\mu(B)}\int_{B}\sigma^{\frac{-1}{p-1}}\mathop{d\mu}\right)^{p-1}<\infty.$$
The measure $\mu$ in the definition is defined to be Lebesgue surface measure on $bD$. Here $B$ is used to denote a ball in an appropriate quasi-metric that reflects the boundary geometry. In the case that $D=\mathbb{D}$, the ``balls" on the boundary are simply intervals on the circle $\mathbb{T}$, and the $A_p$ condition is the classical one for the boundedness of the Hilbert transform on the circle (see, for example, \cite{Nik}).  We remark that analogous weighted results for the the ball are likely known to the experts. 

\subsection{Statement of Main Results}

The main results of this paper are sufficient conditions on the weights $\sigma$ for the $L^p_\sigma$ boundedness of the Szeg\H{o} and Bergman projections on domains with near-minimal smoothness. This condition on the weights is precisely the $A_p$ condition ($B_p$ condition, respectively) in the setting of spaces of homogeneous type with the appropriate quasi-metric on $bD$ (respectively $D$). We will precisely define these metric quantities in Sections \ref{the problem szego} and \ref{theproblemBergman}. We are able to obtain the result for the Szeg\H{o} projection in the minimal smoothness ($C^2$) case. In the Bergman case, because of a techincal obstruction, we must assume that our domain possesses a $C^4$ boundary. Our two principal results are as follows:

\begin{theorem}\label{szego main2}
Let $D$ be strongly pseudoconvex with $C^2$ boundary. Then for $1<p<\infty$ and $\sigma \in A_p$, the Szeg\H{o} projection $\mathcal{S}$ extends to a bounded operator on $L^p_\sigma(bD)$.

\end{theorem}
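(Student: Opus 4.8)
The plan is to realize $\mathcal{S}$ via the Kerzman--Stein machinery described in the introduction and then import the weighted Calder\'on--Zygmund theory for the relevant space of homogeneous type. Following Lanzani and Stein \cite{LS1,LS2}, fix an admissible Cauchy--Fantappi\'e operator $\mathcal{C} = \mathbf{C}\lvert_{bD} = \mathcal{C}_1 + \mathcal{C}_2$, where $\mathcal{C}_1$ has an explicit kernel built from Cauchy--Fantappi\'e data and $\mathcal{C}_2$ is the $\overline{\partial}$-correction. Because $\mathbf{C}$ both produces and reproduces holomorphic functions on $D$, one has on $L^2(bD)$ the Kerzman--Stein identity $\mathcal{S}\bigl(I - (\mathcal{C}^{\ast} - \mathcal{C})\bigr) = \mathcal{C}$; since $A := \mathcal{C}^{\ast} - \mathcal{C}$ is skew-adjoint on $L^2(bD)$, the operator $I - A$ is invertible there (so $\lVert (I-A)^{-1}\rVert_{L^2\to L^2}\le 1$), and hence $\mathcal{S} = \mathcal{C}(I - A)^{-1}$. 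The whole proof then reduces to showing that $\mathcal{C}$ and $(I-A)^{-1}$ each extend boundedly to $L^p_\sigma(bD)$; the resulting operator is automatically the unique bounded extension of $\mathcal{S}$, since $L^\infty(bD) \subset L^2(bD) \cap L^p_\sigma(bD)$ is dense in $L^p_\sigma(bD)$ for $\sigma \in A_p$ and the two operators agree there.

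For the first factor, I would begin by recording that $(bD, d, \mu)$ — with $d$ the quasi-metric on $bD$ reflecting the boundary geometry (the non-isotropic pseudoballs of the introduction) and $\mu$ surface measure — is a space of homogeneous type, i.e.\ $\mu$ is doubling for the $d$-balls; for strongly pseudoconvex $D$ this holds even in the $C^2$ category, where the $d$-balls are only H\"older-regular. The $C^2$ results of Lanzani and Stein give that $\mathcal{C}$ is bounded on $L^2(bD)$ and that the kernel of its principal part $\mathcal{C}_1$ is a Calder\'on--Zygmund kernel with respect to $d$ (the correction $\mathcal{C}_2$ being governed by a more regular, essentially fractional-integral-type kernel). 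Thus $\mathcal{C}$ is an $L^2$-bounded Calder\'on--Zygmund operator on the space of homogeneous type $(bD, d, \mu)$, and the boundedness $\mathcal{C}\colon L^p_\sigma(bD) \to L^p_\sigma(bD)$ for every $1 < p < \infty$ and every $\sigma$ in the $A_p$ class attached to $d$ follows from the Coifman--Weiss theory together with its weighted extension (via sparse domination, or via $A_p$ extrapolation in this generality). The same reasoning applies to $A = \mathcal{C}^{\ast} - \mathcal{C}$: Lanzani and Stein show that on a $C^2$ domain its kernel is again Calder\'on--Zygmund, in fact with an extra smoothing gain, and $A$ is $L^2$-bounded, so $A$ is bounded on $L^p_\sigma(bD)$ as well.

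The main obstacle is upgrading the invertibility of $I - A$ from $L^2(bD)$ to $L^p_\sigma(bD)$: skew-adjointness is an $L^2$ phenomenon, and $\lVert A\rVert_{L^p_\sigma \to L^p_\sigma}$ need not be small across the whole $A_p$ scale, so a bare Neumann series is unavailable. My plan is to exploit the extra regularity of the kernel of $A$ to prove that $A$ is \emph{compact} on $L^p_\sigma(bD)$ — approximating it in operator norm by truncations with bounded kernels supported away from the diagonal, using that its kernel is dominated pointwise by one of fractional-integral type which maps $L^p_\sigma$ compactly into a strictly larger weighted Lebesgue space — so that $I - A$ is Fredholm of index zero on $L^p_\sigma(bD)$. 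Injectivity then transfers from $L^2$: if $(I-A)f = 0$ with $f \in L^p_\sigma(bD)$, the smoothing/mapping properties of $A$ force $f \in L^2(bD)$ by bootstrapping, whence $f = 0$; hence $I - A$ is invertible on $L^p_\sigma(bD)$ and $\mathcal{S} = \mathcal{C}(I-A)^{-1}$ is bounded there. Should the weighted compactness prove awkward in the $C^2$ category, the fallback is to construct a family of admissible operators $\mathcal{C}^\epsilon$ whose errors $A^\epsilon = (\mathcal{C}^\epsilon)^{\ast} - \mathcal{C}^\epsilon$ have operator norm on $L^p_\sigma(bD)$ tending to $0$ as the Cauchy--Fantappi\'e data is refined, invert $I - A^\epsilon$ by a Neumann series for a suitably refined $\epsilon$, and absorb the discrepancy $\mathcal{C}^\epsilon - \mathcal{C}$ into the weighted Calder\'on--Zygmund bounds of the previous step. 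Either way, I expect the careful bookkeeping of the H\"older-regular pseudoballs near a merely $C^2$ boundary — and the verification that all kernel estimates survive with constants depending only on the $A_p$ characteristic — to be the technical heart of the argument.
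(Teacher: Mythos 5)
Your main plan — showing that $\mathcal{C}^{*}-\mathcal{C}$ is compact on $L^p_\sigma(bD)$, running a Fredholm/index-zero argument, and transferring injectivity from $L^2$ — is exactly what the paper does, but only under the \emph{stronger} hypothesis that $bD$ is $C^3$ (Theorem \ref{szego main detailed}). That argument cannot be made to work when $bD$ is merely $C^2$, for two concrete reasons coming out of Lemma \ref{kernel difference}: the proof that the kernel of $(\mathcal{C}^\sharp)^{*}-\mathcal{C}^\sharp$ is ``less singular'' (i.e.\ bounded by $d(w,z)^{-2n+1}$) requires both (a) the Lipschitz regularity of the Leray--Levi density $\Lambda$, and (b) the cubic Taylor remainder $|g(w,z)-\overline{g(z,w)}|\lesssim|w-z|^{3}$ of Proposition \ref{diff of Levi}. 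Both of these consume one full extra derivative of $\rho$; with only $C^2$ regularity, $\Lambda$ is merely continuous and the Taylor remainder degrades to $\mathcal{O}(\varepsilon|w-z|^{2})$ after replacing $\partial^2\rho$ by $C^1$ approximants $\tau^\varepsilon_{j,k}$. So your assertion that ``Lanzani and Stein show that on a $C^2$ domain its kernel is \ldots\ Calder\'on--Zygmund with an extra smoothing gain'' is not what happens: in the $C^2$ category there is no uniformly smoothing kernel estimate for the full error $\mathcal{C}_\varepsilon^{*}-\mathcal{C}_\varepsilon$, and hence no path to weighted compactness of $I-(\mathcal{C}_\varepsilon^{*}-\mathcal{C}_\varepsilon)$ in the manner you outline.

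Your fallback plan is closer in spirit to the paper's proof (Section \ref{szego minimal}), but it contains a gap that is precisely the ``major technical obstruction'' flagged in the introduction: one \emph{cannot} arrange that the whole error $A^\varepsilon=(\mathcal{C}_\varepsilon)^{*}-\mathcal{C}_\varepsilon$ has operator norm tending to zero. What the paper actually does is further split $A^\varepsilon=\mathcal{A}_\varepsilon+\mathcal{D}_\varepsilon$, where $\mathcal{A}_\varepsilon=(\mathcal{C}^s_\varepsilon)^{*}-\mathcal{C}^s_\varepsilon$ is the near-diagonal truncation (whose $L^2_\sigma$ norm can be driven below $1$ by Lemma \ref{small norm operator}, using the $T(1)$-type estimates and the cube decomposition Lemma \ref{cube decomp} to handle the commutator with the merely continuous $\Lambda$), while $\mathcal{D}_\varepsilon=(\mathcal{R}^s_\varepsilon)^{*}-\mathcal{R}^s_\varepsilon$ is the far-diagonal remainder, whose norm can \emph{blow up} as $\varepsilon\to0$ but which has a bounded kernel and hence maps $L^2_\sigma\to L^\infty$. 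Only $I-\mathcal{A}_\varepsilon$ is inverted (by Neumann series), and the Kerzman--Stein equation yields $\mathcal{S}=\mathcal{C}_\varepsilon(I-\mathcal{A}_\varepsilon)^{-1}+\mathcal{S}\mathcal{D}_\varepsilon(I-\mathcal{A}_\varepsilon)^{-1}$. The second summand is then controlled by composing the known \emph{unweighted} $L^p(bD)$ boundedness of $\mathcal{S}$ (Lanzani--Stein) with the reverse H\"older property of $A_2$ weights, which gives an embedding $L^p(bD)\hookrightarrow L^2_\sigma(bD)$ for $p$ large depending on $\sigma$ (Proposition \ref{reverse Holder}). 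This detour through unweighted $L^p$ and reverse H\"older is the genuine new idea in the $C^2$ case and is entirely absent from your outline. (A smaller point: the paper proves only $p=2$ directly and reaches general $p$ by Rubio de Francia extrapolation rather than directly running the argument on $L^p_\sigma$; this also explains why it is the reverse H\"older property of $A_2$, and not $B_p$, that is used — indeed the failure of reverse H\"older for $B_p$ is the stated reason the same minimal-smoothness result is unavailable for the Bergman projection.)
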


\begin{theorem}\label{bergman main}
Let $D$ be strongly pseudoconvex with $C^4$ boundary. Then for $1<p<\infty$ and $\sigma \in B_p$ the Bergman projection $\mathcal{B}$ extends to a bounded operator on $L^p_\sigma(D).$
\end{theorem}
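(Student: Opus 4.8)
The plan is to adapt the Kerzman--Stein strategy described above to the Bergman setting and then run it on the weighted space $L^p_\sigma(D)$. Using the theory of Cauchy--Fantappiè integrals one constructs an auxiliary operator $\mathbf{B} = \mathbf{B}_1 + \mathbf{B}_2$ that produces and reproduces functions in $A^2(D)$: here $\mathbf{B}_1$ has an (essentially) explicit kernel built from the defining function $\rho$ and its Levi polynomial, while $\mathbf{B}_2$ is a correction obtained by solving a $\bar\partial$-problem on a smoothly bounded strongly pseudoconvex domain containing $\overline D$, and $\mathbf{B}$ is bounded on $L^2(D)$. Since $\mathcal{B}$ is the self-adjoint $L^2(D)$-projection onto $A^2(D)$ while $\mathbf{B}$ fixes $A^2(D)$ and maps into it, one has $\mathbf{B}\mathcal{B} = \mathcal{B}$ and $\mathcal{B}\mathbf{B} = \mathbf{B}$; taking the adjoint of the first identity on unweighted $L^2(D)$ and subtracting yields the operator equation
\begin{equation*}
\mathcal{B}\bigl(I - (\mathbf{B}^* - \mathbf{B})\bigr) = \mathbf{B},
\qquad\text{hence}\qquad
\mathcal{B} = \mathbf{B}\bigl(I - (\mathbf{B}^* - \mathbf{B})\bigr)^{-1}
\end{equation*}
as soon as the inverse makes sense as a bounded operator on $L^p_\sigma(D)$. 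The theorem therefore follows from two assertions: (a) $\mathbf{B}$ extends to a bounded operator on $L^p_\sigma(D)$ for every $\sigma \in B_p$; and (b) $I - (\mathbf{B}^* - \mathbf{B})$ is invertible on $L^p_\sigma(D)$.

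For (a) I would argue that, after discarding a harmless smoothing remainder, the kernel $K_{\mathbf{B}}(z,w)$ is dominated by the model Bergman-type kernel attached to the quasi-metric $d$ on $D$ whose balls $B(w,R)$ appear in the definition of $B_p$; concretely one establishes the size bound $|K_{\mathbf{B}}(z,w)| \lesssim V\bigl(B(z,d(z,w))\bigr)^{-1}$ (with the expected improvement as $z$ or $w$ approaches $bD$) together with a Hölder-type Calderón--Zygmund regularity estimate in each variable, all relative to the space of homogeneous type $(D,d,dV)$. One then dominates $\mathbf{B}$ pointwise by maximal/sparse operators adapted to $d$ --- or, exploiting that the relevant kernel is essentially positive, runs a Schur-type test against a suitable power of $|\rho|$ --- and invokes the weighted estimates for such operators on $(D,d,dV)$, which hold exactly under the Muckenhoupt-type testing condition defining $B_p$. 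It is at this stage that the $C^4$ hypothesis enters: one needs sufficiently many derivatives of $\rho$ both to write down $K_{\mathbf{B}_1}$ and to extract its regularity.

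For (b) I would use the Kerzman--Stein cancellation: the leading singularities of $K_{\mathbf{B}}(z,w)$ and of $\overline{K_{\mathbf{B}}(w,z)}$ agree, so the kernel of $A := \mathbf{B}^* - \mathbf{B}$ acquires an extra gain of order $\varepsilon>0$ in the quasi-metric, $|K_A(z,w)| \lesssim d(z,w)^{\varepsilon}\, V\bigl(B(z,d(z,w))\bigr)^{-1}$ together with the corresponding boundary decay. A kernel of this fractional type defines an operator that is bounded --- in fact smoothing --- on $L^p_\sigma(D)$ for $\sigma \in B_p$, by the reverse-Hölder self-improvement of $B_p$ weights on $(D,d,dV)$; moreover, localizing $A$ with a finite partition of unity into small coordinate patches makes the norm of each localized piece as small as desired (the $\varepsilon$-gain produces a factor tending to $0$ with the size of the patch), so that $I - A$ is invertible on $L^p_\sigma(D)$ by a Neumann series assembled over the cover. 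Alternatively, $A$ is compact on $L^p_\sigma(D)$, while $I-A$ is injective there --- this can be seen by a bootstrapping argument that upgrades any solution of $(I-A)f=0$ in $L^p_\sigma(D)$ to the unweighted $L^2(D)$, where $I-A$ is already known to be invertible --- so Fredholm theory closes the argument.

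The principal obstacle is (a): developing the Calderón--Zygmund / sparse machinery for the non-canonical Cauchy--Fantappiè Bergman kernel on the space of homogeneous type $(D,d,dV)$ and squeezing the sharp weighted bound out of the $B_p$ condition, together with the careful bookkeeping needed to see the $\varepsilon$-gain in $\mathbf{B}^* - \mathbf{B}$ --- which is the true reason the argument requires $C^4$ rather than $C^2$ regularity.
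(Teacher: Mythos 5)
Your overall strategy is the same as the paper's: construct an auxiliary Cauchy--Fantappi\`e operator $\mathcal{T}$ that produces and reproduces holomorphic functions, derive the Kerzman--Stein identity $\mathcal{B}(I-(\mathcal{T}^*-\mathcal{T}))=\mathcal{T}$, prove $\mathcal{T}$ is bounded on $L^p_\sigma(D)$ by viewing it (up to smoothing remainder) as a Calder\'on--Zygmund operator on the space of homogeneous type $(D,d,dV)$, show that $\mathcal{T}^*-\mathcal{T}$ has a kernel with a $d(z,w)^{1/2}$ gain, deduce compactness on $L^p_\sigma(D)$, and then use a bootstrap (via the $L^p$-improving property of the integrable kernel) to rule out $1$ from the spectrum, closing with Fredholm theory. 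Your second alternative for step (b) is exactly the paper's argument, and you correctly identify that $C^4$ regularity is what makes the $|w-z|^3$ cancellation in $g(w,z)-\overline{g(z,w)}$ available.

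However, two of the alternatives you float as fallbacks would not work, and one of them rests on a false premise. First, a Schur test ``against a suitable power of $|\rho|$'' is what establishes unweighted $L^p$ bounds for $\mathcal{T}$, but it does not extend to general $B_p$ weights --- Schur's test only handles radial-type weights, which is exactly why the Calder\'on--Zygmund route is necessary here. Second, and more seriously, you invoke ``the reverse-H\"older self-improvement of $B_p$ weights'' to conclude boundedness of the error term and to set up a Neumann-series argument over a cover. This is false: $B_p$ weights on the ball (and hence in this setting) do \emph{not} satisfy a reverse H\"older inequality (Borichev, \emph{Math.\ Ann.}\ 2004), and this failure is precisely the reason the paper cannot extend the Szeg\H{o}-style $C^2$ argument to the Bergman projection and must assume $C^4$. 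The correct substitute is the factorization $\sigma=\sigma_1/\sigma_2^{p-1}$ with $\sigma_1,\sigma_2\in B_1$ (Rubio de Francia), combined with the weighted Schur-type compactness lemma: one controls integrals of $B_1$ weights against the smoothing kernel on balls $B(z,\delta)$ by $(\delta^{1/2}+d(z,bD)^{1/2})\sigma(z)$, truncates the kernel near the diagonal and near the boundary, and shows the truncated operators converge in $L^p_\sigma$-operator norm. Since you already listed the compactness/Fredholm alternative, your plan survives --- but you should discard the reverse-H\"older route entirely; it is not merely unnecessary, it is unavailable for $B_p$.
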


We remark that in the case that $D$ has $C^3$ boundary, our results for the Szeg\H{o} projection can be considerably sharpened. In fact, in this case it is possible to explicitly relate the extension of the Szeg\H{o} projection on the weighted space to the auxiliary operator $\mathcal{C}$ using an operator equation. See Theorem \ref{szego main detailed} in the beginning of Section \ref{szego section} for more details. See also Theorem \ref{bergman main detailed} in the beginning of Section \ref{Bergman section} for a more detailed version of Theorem \ref{bergman main}. 

Note that these theorems only give sufficient conditions, not necessary conditions. Notably, our methods are only suited to proving the sufficiency of the $A_p$/$B_p$ condition, not the necessity. To obtain any results concerning the necessity of the $A_p$/$B_p$ condition, it seems likely one would instead have to study the operator $\mathcal{S}$ or $\mathcal{B}$ directly and obtain novel estimates on the kernel function.

\subsection{An Outline of the Proof} \label{Intro Proof Outline}
For the remainder of the introduction, we provide a broad strokes outline of the method of proof so the reader has an idea of how the various pieces will fit together. Recall that via an idea of Kerzman and Stein, the Szego projection $\mathcal{S}$ can be related to a ``Cauchy integral" $\mathcal{C}$. It can be shown that the operator $\mathcal{C}$ is a (non-orthogonal) projection from $L^2(bD)$ to $H^2(bD)$. Thus, we obtain the following two operator identities relating $\mathcal{S}$ and $\mathcal{C}$ on $L^2(bD)$:
$$\mathcal{S}\mathcal{C}=\mathcal{C}, \quad \mathcal{C} \mathcal{S}=\mathcal{S}.$$

Taking adjoints of the second identity, subtracting from the first and some further manipulation yields the following operator identity:

\begin{equation}\mathcal{S}(I-(\mathcal{C}^*-\mathcal{C}))=\mathcal{C}. \label{1}\end{equation}

We will subsequently refer to \eqref{1} as the \emph{Kerzman-Stein equation}. Note that if $(I-(\mathcal{C}^*-\mathcal{C}))$ is invertible on $L^2(bD)$ (this is true in the case $D$ is $C^\infty$, see \cite{KS2}), we arrive at an explicit formula for $\mathcal{S}$ in terms of $\mathcal{C}$: 
$$\mathcal{S}=\mathcal{C}(I-(\mathcal{C}^*-\mathcal{C}))^{-1}.$$

At this point, it should be noted that a completely analogous approach can be employed for the Bergman projection in which the Cauchy-Fantappi\'e integral, which we denote by $\mathcal{T}$, is taken over the solid domain rather than the boundary (this approach was used to prove certain regularity properties of the Bergman projection; see for example \cite{Lig1,Lig2}). Now perhaps the reader can see the utility of such an approach in proving $L^p$ estimates. To prove that the Szeg\H{o} (or Bergman) projection extends to a bounded operator on $L^p$, one must prove the following two facts concerning $\mathcal{C}$ (respectively $\mathcal{T}$):
\begin{enumerate}
\item The operator $\mathcal{C}$ is bounded on $L^p$;
\item The operator $(I-(\mathcal{C}^*-\mathcal{C}))$ is invertible on $L^p.$
\end{enumerate}

The regularity of the domain is crucial in assessing whether the operator $(I-(\mathcal{C}^*-\mathcal{C}))$ is invertible on $L^p$. If this operator is to be invertible, the ``error" $\mathcal{C}^*-\mathcal{C}$ must be small in some appropriate sense (for example, compact, smoothing, and/or with norm less than $1$). In particular, for the Szeg\H{o} projection, we will require the domain to be $C^3$ (for this method of inversion), while for the Bergman projection we will require the domain to be $C^4$.

As mentioned previously, in \cite{LS1,LS2}, Lanzani and Stein considered the situation of minimal regularity and proved that the Cauchy-Szeg\H{o} and Bergman projections are bounded on $L^p$ for $1<p<\infty$. In the case of the Szeg\H{o} projection they transfer the question of boundedness to real-variable singular integral theory via the theory of spaces of homogeneous type. Recall that a space of homogeneous type is a triple $(X,d, \mu)$ where $X$ is a set, $d$ is a quasi-metric on $X$, and $\mu$ is a measure on $X$ that is doubling on the balls induced by the quasi-metric. Lanzani and Stein show that the kernel of the operator $\mathcal{C}$ satisfies the appropriate size and smoothness estimates with respect to this quasi-metric (more precisely, they consider the kernel of the ``main part" of the operator, $\mathcal{C}^\sharp$; there is an error term they also must handle). The celebrated $T(1)$ Theorem in harmonic analysis is then invoked to establish that the operator $\mathcal{C}$ is bounded on $L^2(bD)$. This result together with the kernel estimates of course implies that $\mathcal{C}$ is bounded on $L^p(bD)$ for $1<p<\infty$. With appropriate control on the ``error term" $\mathcal{C}^*-\mathcal{C}$, Lanzani and Stein establish that $\mathcal{S}$ is bounded on $L^p(bD)$ for $1<p<\infty$. The approach to the Bergman projection is similar insofar as it uses the Kerzman-Stein operator equation, but it is simpler because singular integral theory is not required. Instead, Schur's test for positive operators is a major ingredient in the proof. 

 We follow the general program of Lanzani and Stein in the weighted setting in the next section. In particular, we use the same construction of the auxiliary operator that goes back to Kerzman, Stein, and Ligocka in \cite{KS1,KS2,Lig2}, and we obtain the Kerzman-Stein equation. In the case of the Szeg\H{o} projection, we obtain weighted $L^p$ bounds on the auxiliary operator $\mathcal{C}$ using the same real-variable singular integral approach in \cite{LS1}. The weights belong to an $A_p$ class induced by the quasi-metric on the boundary of $D$.

 In the case of the Bergman projection, Schur's Test is ill-equipped to deal with weights other than radial weights, so a new approach is needed. In particular, to prove the operator $\mathcal{T}$ is boudned on $L^p_\sigma(D)$, we must use a modified singular integral theory and view the Bergman projection as a kind of Calder\'{o}n-Zygmund operator with respect to an appropriate quasi-metric. This idea was precisely the one used by B\'{e}koll\`{e} and Bonami when they obtained weighted $L^p$ estimates for the Bergman projection on the ball when the weight belongs to the $B_p$ class (see \cite{Bek1,Bek2}). Notably, we use key ideas developed by McNeal in \cite{Mc1,Mc2} and other papers that show the Bergman projection can be viewed as a singular integral operator for several important classes of pseudoconvex domains. Combining ideas from these papers, we define a $B_p$ class of weights adapted to our domain and prove that the auxiliary operator $\mathcal{T}$ is bounded on weighted $L^p$. The authors recently took a similar approach when studying the Bergman projection directly in the case when $D$ has smooth boundary, see \cite{HWW,HWW2}. In particular, our results for the Bergman projection in this paper constitute a generalization of the result in \cite{HWW}, because the quasi-metric is the same as the one in that paper. 

In both cases, to show that the operator $(I-(\mathcal{C}^*-\mathcal{C}))$ (or $I-(\mathcal{T}^*-\mathcal{T}))$  is invertible on $L^p_\sigma$ when $\sigma \in A_p$ (or $B_p$), we prove that $\mathcal{C}^*-\mathcal{C}$ (respectively  $\mathcal{T}^*-\mathcal{T}$) is compact on $L^p_\sigma$ for $\sigma\in A_p$ (respectively $B_p$) and also ``improves" $L^p$ spaces. Using the Kerzman-Stein equation, this grants the boundedness of $\mathcal{S}$ (respectively $\mathcal{B}$) on $L^p_\sigma$.

Because Lanzani and Stein assume less regularity, our approach entails an application of their arguments in a simpler setting, so some technical obstructions in their paper can be ignored. In particular, Lanzani and Stein consider an entire family of Cauchy-Fantappi\'{e} type operators $\mathcal{C}_\varepsilon$ with parameter $\varepsilon$, while we only need to consider a single auxiliary operator $\mathcal{C}$ (this can be viewed as a special case of the operators in \cite{LS1} with $\varepsilon=0$). A major technical obstruction in their papers is that the operator $(I-(\mathcal{C}_\varepsilon^*-\mathcal{C}_\varepsilon))$ is no longer invertible on $L^p$, so they must split it appropriately. 

As mentioned previously, we are actually able to obtain the same result for the Szeg\H{o} projection in the case of minimal ($C^2$) smoothness. Here we follow the approach in \cite{LS2} of ``partially inverting" $(I-(\mathcal{C}_\varepsilon^*-\mathcal{C}_\varepsilon))$ by writing $$\mathcal{C}_\varepsilon^*-\mathcal{C}_\varepsilon=\mathcal{A}_\varepsilon+\mathcal{D}_\varepsilon,$$ where $\mathcal{A}_\varepsilon$ has small norm for sufficiently small $\varepsilon$ so $I-\mathcal{A}_\varepsilon$  is invertible on $L^2_\sigma(bD)$ using a Neumann series. We only focus on $p=2$; the general result may be obtained via extrapolation (see \cite{Ru}; extrapolation still holds in spaces of homogeneous type). The result can also be obtained directly without extrapolation, but no new significant information is obtained. The operator $\mathcal{D}_\varepsilon$ may in general be unbounded in norm as $\varepsilon \rightarrow 0$, but it does map $L^2_\sigma(bD)$ to $L^\infty(bD)$, which turns out to be enough. The reverse H\"{o}lder property of $A_p$ weights is the only key property we use in the proof. However, because $B_p$ weights do not satisfy a reverse H\"{o}lder inequality (see \cite{Bori}), we are unable to obtain the same minimal regularity result for the Bergman projection.

This paper is organized as follows. Sections \ref{szego section} and \ref{szego minimal}  are devoted to the Szeg\H{o} projection while Section \ref{Bergman section} focuses on the Bergman projection. Section \ref{szego section} focuses on the case where $D$ is $C^3$ and sharper results can be obtained, while Section \ref{szego minimal} focuses on the minimal smoothness case and proves the full strength of Theorem \ref{szego main2}. At the beginning of each section, the first subsection introduces the background material and the construction of the relevant integral operators. The latter subsections deal with the proofs.

\section{The Szeg\H{o} Projection on $C^3$ domains}\label{szego section}
In this section, we assume that $D$ is a strongly pseudoconvex domain of class $C^3$. We aim to prove the following theorem, which corresponds to a special case of Theorem \ref{szego main2} but also provides more detailed information about the connection between the main and auxiliary operators that is unavailable in the minimal smoothness case.
\begin{theorem}\label{szego main detailed}
Let $D$ be strongly pseudoconvex with $C^3$ boundary. Then for $1<p<\infty$ and $\sigma \in A_p$, the following hold:
\begin{enumerate}
\item The operator $\mathcal{C}^*-\mathcal{C}$ is compact on $L^p_\sigma(bD)$.
\item The operator $I-(\mathcal{C}^*-\mathcal{C})$ is invertible on $L^p_\sigma(bD)$.
\item The Szeg\H{o} projection $\mathcal{S}$ extends to a bounded operator on $L^p_\sigma(bD)$ and satisfies

$$\mathcal{S}=\mathcal{C}(I-(\mathcal{C}^*-\mathcal{C}))^{-1}.$$

\end{enumerate}
\end{theorem}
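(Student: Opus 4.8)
The plan is to establish the three assertions in order, deducing each from the previous one together with the structural facts already recorded.

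\emph{Step 1: weighted boundedness of $\mathcal{C}$ and $\mathcal{C}^*$, and compactness of the error.} I would first note that $\mathcal{C}$ and its $L^2(bD)$-adjoint $\mathcal{C}^*$ are bounded on $L^p_\sigma(bD)$ for every $\sigma\in A_p$: both are Calder\'{o}n--Zygmund operators relative to the quasi-metric space of homogeneous type on $bD$ (the main part $\mathcal{C}^\sharp$ satisfies the required size and smoothness estimates, and on a $C^3$ domain the error term is more regular still), so the standard weighted norm inequalities for Calder\'{o}n--Zygmund operators on spaces of homogeneous type apply; in particular $\mathcal{C}^*-\mathcal{C}$ is bounded on $L^p_\sigma(bD)$. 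For compactness, the crucial input is the \emph{improved kernel estimate} on $C^3$ domains: the kernel of $\mathcal{C}^*-\mathcal{C}$ is dominated by $d(w,z)^{-(n-\alpha)}$ for some fixed $\alpha>0$ (with $n$ the homogeneous dimension of $bD$), i.e. $\mathcal{C}^*-\mathcal{C}$ is controlled by a fractional integral operator of positive order, the gain being exactly where the extra derivative of regularity beyond $C^2$ is spent. Granting this, I would split the kernel at scale $\delta$: the truncated piece has a bounded kernel on a finite measure space, maps $L^p_\sigma(bD)$ into $L^\infty(bD)$, and is a norm limit of finite-rank operators, hence defines a compact operator on $L^p_\sigma$; while the remaining piece has $L^p_\sigma\to L^p_\sigma$ operator norm tending to $0$ as $\delta\to 0$ by the weighted fractional-integral estimate (this is where the reverse H\"older inequality for $A_p$ weights enters). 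Thus $\mathcal{C}^*-\mathcal{C}$ is a norm limit of compact operators, so it is compact on $L^p_\sigma(bD)$.

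\emph{Step 2: invertibility.} Since $\mathcal{C}^*-\mathcal{C}$ is compact on $L^p_\sigma(bD)$, the operator $I-(\mathcal{C}^*-\mathcal{C})$ is Fredholm of index zero there, so it suffices to prove injectivity. Two ingredients combine. On $L^2(bD)$ the operator $\mathcal{C}^*-\mathcal{C}$ is \emph{skew-adjoint}, $(\mathcal{C}^*-\mathcal{C})^*=-(\mathcal{C}^*-\mathcal{C})$, so $\|(I-(\mathcal{C}^*-\mathcal{C}))f\|_{L^2(bD)}^2=\|f\|_{L^2(bD)}^2+\|(\mathcal{C}^*-\mathcal{C})f\|_{L^2(bD)}^2\ge\|f\|_{L^2(bD)}^2$; the same bound applied to the adjoint $I+(\mathcal{C}^*-\mathcal{C})$ shows $I-(\mathcal{C}^*-\mathcal{C})$ is bounded below with dense range, hence invertible on $L^2(bD)$. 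Second, the improved kernel estimate shows $\mathcal{C}^*-\mathcal{C}$ maps $L^q_\sigma(bD)$ into $L^{q_1}_\sigma(bD)$ for some $q_1>q$ (weighted fractional integration, legitimate since $A_p\subset A_q$ for $q>p$), and iterating finitely many times shows that any $f\in L^p_\sigma(bD)$ with $(\mathcal{C}^*-\mathcal{C})f=f$ lies in $L^Q_\sigma(bD)$ for arbitrarily large $Q$, hence in $L^2(bD)$ (using $\sigma^{-1/(p-1)}\in L^1(bD)$ and that $bD$ has finite measure). Such an $f$ then lies in the kernel of $I-(\mathcal{C}^*-\mathcal{C})$ on $L^2(bD)$, so $f=0$. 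Therefore $I-(\mathcal{C}^*-\mathcal{C})$ is injective, hence invertible, on $L^p_\sigma(bD)$.

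\emph{Step 3: the formula for $\mathcal{S}$.} On $L^2(bD)$, the Kerzman--Stein equation \eqref{1} together with the $L^2$-invertibility from Step 2 gives $\mathcal{S}=\mathcal{C}(I-(\mathcal{C}^*-\mathcal{C}))^{-1}$. Interpreting the inverse on $L^p_\sigma(bD)$, the right-hand side is bounded on $L^p_\sigma(bD)$ by Step 1 (boundedness of $\mathcal{C}$) and Step 2. It remains to check that this operator agrees with $\mathcal{S}$ on a dense subspace: for $f\in L^\infty(bD)\subset L^2(bD)\cap L^p_\sigma(bD)$, let $h=(I-(\mathcal{C}^*-\mathcal{C}))^{-1}f$ computed in $L^2(bD)$; applying the bootstrapping of Step 2 to $h=f+(\mathcal{C}^*-\mathcal{C})h$ shows $h\in L^p_\sigma(bD)$, so by uniqueness $h$ is also the $L^p_\sigma$-inverse applied to $f$, whence $\mathcal{S}f=\mathcal{C}h=\mathcal{C}(I-(\mathcal{C}^*-\mathcal{C}))^{-1}f$ with the inverse on $L^p_\sigma$. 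Since $L^\infty(bD)$ is dense in $L^p_\sigma(bD)$ (as $\sigma\in L^1(bD)$) and both sides are bounded operators on $L^p_\sigma(bD)$, they coincide; this is precisely the assertion that $\mathcal{S}$ extends boundedly to $L^p_\sigma(bD)$ with the stated formula.

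\emph{Main obstacle.} The soft functional analysis above (Fredholm theory, skew-adjointness on $L^2(bD)$, density) is routine once two facts are in hand; the real work is precisely those two facts, namely the upgrade from the \emph{unweighted} smoothing of $\mathcal{C}^*-\mathcal{C}$ --- the improved kernel estimate, which on $C^3$ domains is essentially the $\varepsilon=0$ specialization of the estimates of Lanzani and Stein --- to its \emph{weighted} consequences: compactness on $L^p_\sigma(bD)$ and the $L^p_\sigma$-improving property, uniformly for $1<p<\infty$ and $\sigma\in A_p$. Carrying this out requires combining the fractional-type kernel bound with the reverse H\"older inequality for $A_p$ weights and the weighted mapping properties of fractional integrals on the space of homogeneous type $bD$, and it is here that the $C^3$ hypothesis is genuinely used.
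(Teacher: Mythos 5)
Your proposal follows essentially the same strategy as the paper: establish weighted boundedness of $\mathcal{C}$ via Calder\'on--Zygmund theory on the space of homogeneous type, prove compactness of $\mathcal{C}^*-\mathcal{C}$ on $L^p_\sigma$ from the improved kernel bound $|K(z,w)|\lesssim d(w,z)^{-2n+1}$ (your ``$d(w,z)^{-(n-\alpha)}$ with $n$ the homogeneous dimension''), use skew-adjointness plus a bootstrapping/regularity-lifting argument to rule out $1$ as an eigenvalue, and close the loop with the Kerzman--Stein equation. The overall architecture is the same as the paper's, and your explicit density/consistency check in Step 3 is a welcome clarification that the paper glosses over.

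One point where you diverge on the technical toolkit: you repeatedly attribute the weighted compactness and $L^p_\sigma$-improving estimates to the reverse H\"older inequality for $A_p$ weights. The paper does not use reverse H\"older in the $C^3$ section at all (it appears only in the $C^2$ section, Proposition~\ref{reverse Holder}). Instead, the compactness proof (Lemma~\ref{compactness}) factors $\sigma=\sigma_1/\sigma_2^{p-1}$ with $\sigma_1,\sigma_2\in A_1$ via Rubio de Francia's factorization theorem and then runs a weighted Schur-type argument, using the key estimate $\int_{B(z,\delta)}|K(z,w)|\sigma'(w)\,d\mu(w)\lesssim\delta\,\sigma'(z)$ for $\sigma'\in A_1$ (Proposition~\ref{A_1 control}). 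This gives the $C(\delta)\to 0$ decay of the truncated tail directly and uniformly, without passing through a weighted fractional-integral theorem. Your reverse-H\"older route could presumably be made to work (it can yield a self-improvement in the exponent and hence a weighted fractional-integration bound), but it would require genuine extra development, whereas the $A_1$-factorization Schur argument is essentially self-contained. You should replace the appeal to reverse H\"older by the $A_1$-factorization argument, or at least flag that reverse H\"older is not what makes the tail small in the paper. Everything else in your outline --- including the Fredholm index-zero framing of Step~2, which is an equivalent phrasing of the paper's compact-operator spectral argument in Lemma~\ref{spectrum} --- is sound.
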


\subsection{Background and Setup for $C^3$ Domains}\label{the problem szego}

The first step, following the approach of Lanzani and Stein as well as many other authors, is to construct an integral operator that reproduces and produces holomorphic functions from integration of their boundary values. To begin with, define the Levi polynomial at $w \in bD$:
$$P_w(z):= \sum_{j=1}^{n} \dfrac{\partial \rho}{\partial w_j}(w)(z_j-w_j) +\frac{1}{2}\sum_{j,k=1}^{n}\dfrac{\partial^2 \rho}{\partial w_j\partial w_k}(w)(z_j-w_j)(z_k-w_k).$$

Using the strict pseudoconvexity of $D$, it is possible to choose a $C^\infty$ cutoff function $\chi$ and a constant $c$  so that $\chi \equiv 1$ when $|z-w|\leq c/2$ and $\chi \equiv 0$  when $|z-w| \geq c$ so that the function

$$ g(w,z):= \chi(-P_w(z))+(1-\chi)|w-z|^2 $$

\noindent satisfies \begin{equation} \text{Re}(g(w,z)) \gtrsim -\rho(z)+ |w-z|^2 \label{2} \end{equation} for $z \in D$ (see \cite{LS2}). 

Recall that a \emph{generating form} $\eta(w,z)$ is a form of type $(1,0)$ in $w$ with $C^1$ coefficient functions such that $\langle \eta(w,z), w-z \rangle=1$ for all $z \in D$ and $w$ in a neighborhood of $bD$ \cite{LS4}. Here $\langle \cdot, \cdot \rangle$ denotes the action of a $1$-form on a vector in $\mathbb{C}^n$. The importance of generating forms lies in the construction of Cauchy-Fantappi\'e integrals. The upshot of \eqref{2} is that we can construct a generating form as follows: define the following $(1,0)$ form in $w$
$$G(w,z):=\chi \left(\sum_{j=1}^{n} \frac{\partial \rho}{\partial w_j}(w) \mathop{dw_j}-\frac{1}{2}\sum_{j,k=1}^{n} \frac{\partial^2 \rho}{\partial w_j \partial w_k}(w)(w_k-z_k)\mathop{dw_j}\right)+(1-\chi)\sum_{j=1}^{n} (\overline{w}_j-\overline{z}_j)\mathop{dw_j}.$$

Then define for $w \in bD$, $z \in D$ $$\eta(w,z):=\frac{G(w,z)}{\langle G(w,z), w-z \rangle}=\frac{G(w,z)}{g(w,z)}.$$

Then it is immediate that $\eta$ is a generating form. As in \cite{LS1,R}, define the associated Cauchy-Fantappi\'e integral operator

$$\mathbf{C}_1(f)(z):= \frac{1}{(2 \pi \ii)^n} \int_{w \in bD}  f(w)  j^* (\eta \wedge (\overline{\partial}
\eta)^{n-1})(w,z)=\frac{1}{(2 \pi \ii)^n} \int_{w \in bD} \frac{f(w) j^*\left(G \wedge (\overline{\partial}G)^{n-1}(w,z)\right)}{(g(w,z))^{n}},$$
where $j:bD \hookrightarrow \mathbb{C}^n$ is the inclusion map. The point is that this operator reproduces holomorphic functions that are continuous up the boundary, as made precise in the following proposition (see \cite{LS1}):
\begin{proposition}\label{reproduce holo}
Let $F$ be holomorphic on $D$ and continuous on $\overline{D}$, and let $f=F \lvert_{bd}$. Then there holds for $z \in D$
$$\mathbf{C}_1(f)(z)=F(z).$$
\end{proposition}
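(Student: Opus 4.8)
The plan is to establish the reproducing formula $\mathbf{C}_1(f)(z) = F(z)$ via the standard Cauchy–Fantappi\`e machinery, localized by the cutoff $\chi$. First I would recall the abstract Cauchy–Fantappi\`e framework: for any generating form $\eta(w,z)$ satisfying $\langle \eta(w,z), w - z\rangle = 1$, the form $\Omega^0_\eta := \eta \wedge (\overline\partial_{w}\eta)^{n-1}$ is a Cauchy–Fantappi\`e kernel, and the classical Cauchy–Fantappi\`e–Leray formula (see \cite{R}) gives, for any $F \in \mathrm{Hol}(D) \cap C^0(\overline D)$ and $z \in D$,
$$
F(z) = \frac{1}{(2\pi\ii)^n}\int_{w \in bD} F(w)\, j^*\!\big(\eta \wedge (\overline\partial_w \eta)^{n-1}\big)(w,z) - \frac{1}{(2\pi\ii)^n}\int_{w \in D} \overline\partial_w F(w) \wedge (\cdots),
$$
where the second (solid) integral involves a homotopy term built from $\eta$ and the Bochner–Martinelli form. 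Since $F$ is holomorphic on $D$, $\overline\partial_w F \equiv 0$ on $D$, so the solid integral vanishes and only the boundary term survives. Care must be taken because $F$ is only assumed continuous up to $bD$, not smooth there; the standard remedy is to prove the identity first on $D_\epsilon := \{\rho < -\epsilon\}$ (where $F$ is smooth up to the boundary), obtaining $F(z) = \frac{1}{(2\pi\ii)^n}\int_{bD_\epsilon} F \, j^*_\epsilon(\eta \wedge (\overline\partial_w\eta)^{n-1})$, and then let $\epsilon \to 0^+$, using continuity of $F$ on $\overline D$ together with the fact that $g(w,z)$ stays bounded away from $0$ for fixed $z \in D$ and $w$ near $bD$ (a consequence of \eqref{2}, which gives $\mathrm{Re}(g(w,z)) \gtrsim -\rho(z) > 0$), so the kernel is uniformly integrable on the approximating boundaries.

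Next I would verify that the abstract framework applies to the specific $\eta = G/g$ constructed here. The two things to check are: (i) $\eta$ is genuinely a generating form with $C^1$ coefficients in $w$ on a neighborhood of $bD$ (for fixed $z \in D$) — this follows because $\rho \in C^3$ makes $G$ a $C^1$ $(1,0)$-form in $w$, because $g(w,z) = \langle G(w,z), w - z\rangle$ by direct computation (the $dw_j$ are paired against $w_j - z_j$), and because $\mathrm{Re}(g) \gtrsim -\rho(z) > 0$ keeps $g$ nonvanishing; and (ii) the boundary kernel $j^*(\eta \wedge (\overline\partial_w\eta)^{n-1})$ simplifies to the stated form $\frac{j^*(G \wedge (\overline\partial_w G)^{n-1})}{g^n}$. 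Point (ii) is the one genuinely computational step: expanding $\overline\partial_w \eta = \overline\partial_w(G/g) = \frac{\overline\partial_w G}{g} - \frac{G \wedge \overline\partial_w g}{g^2}$ and forming the $(n-1)$-fold wedge, every term containing a factor of $G \wedge \overline\partial_w g$ gets wedged (after multiplying by the leading $\eta = G/g$) against another copy of $G$, and $G \wedge G = 0$ by antisymmetry of the wedge; hence only the term $\frac{\eta}{1} \wedge \big(\frac{\overline\partial_w G}{g}\big)^{n-1} = \frac{G \wedge (\overline\partial_w G)^{n-1}}{g^n}$ survives, which is exactly the claimed identity.

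The main obstacle I anticipate is \emph{not} the algebraic simplification but the justification of the limiting argument and, more subtly, the role of the cutoff $\chi$: because $G$ is defined piecewise via $\chi$ and $1-\chi$, it is only $C^1$ (not $C^\infty$) across the transition region $c/2 \le |w - z| \le c$, and one must confirm that the Cauchy–Fantappi\`e–Leray formula — which is usually stated for smooth generating forms — remains valid under this reduced regularity. The standard resolution (as in \cite{LS1,LS2,R}) is that $C^1$ regularity of $\eta$ in $w$ is exactly enough for $\overline\partial_w \eta$ to be a well-defined continuous form and for Stokes' theorem to apply to $d_w(F \cdot \eta \wedge (\overline\partial_w\eta)^{n-1} \wedge (\text{homotopy}))$; alternatively one notes that $\mathbf{C}_1$ and the true Bochner–Martinelli operator differ by an operator with a smooth solid-integral correction that annihilates holomorphic $F$. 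I would invoke this established fact rather than reprove it. The remaining details — that the approximating integrals over $bD_\epsilon$ converge to the integral over $bD$, and that $F$ restricted to $bD_\epsilon$ converges uniformly to $f$ — are routine given the uniform continuity of $F$ on $\overline D$ and the uniform bound on the kernel for $w$ near $bD$ and $z$ fixed in the interior.
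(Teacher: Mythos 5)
The paper does not include a proof of this proposition; it simply cites Lanzani--Stein \cite{LS1} (and the same material appears in Range \cite{R}). Your argument reproduces exactly the standard Cauchy--Fantappi\`e--Leray proof from those sources: the algebraic collapse $\eta\wedge(\overline\partial_w\eta)^{n-1}=G\wedge(\overline\partial_w G)^{n-1}/g^n$ via $\eta\wedge\eta=0$, the vanishing of the solid integral for holomorphic $F$, and the exhaustion by $D_\varepsilon=\{\rho<-\varepsilon\}$ together with uniform integrability of the kernel (since $\mathrm{Re}\,g(w,z)\gtrsim-\rho(z)>0$ for $z$ fixed in $D$) to handle $F$ that is only continuous up to $bD$. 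This is the correct route and I see no gap.

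One small factual correction: the reduced regularity of $G$ has nothing to do with the cutoff $\chi$, which is $C^\infty$; it comes from the fact that $G$ involves the second derivatives of $\rho$ and $\rho$ is only assumed $C^3$ in Section \ref{szego section}, so the coefficients of $G$ are $C^1$ everywhere (not merely across the transition annulus $c/2\le|w-z|\le c$). Your conclusion that $C^1$ suffices to apply Stokes' theorem is nonetheless correct, so this misattribution does not affect the validity of the proof.
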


The problem now is that $\mathbf{C}_1$ does not necessarily produce holomorphic functions, as the form $\eta$ is not necessarily holomorphic in $z$. This difficulty can be overcome by solving a $\overline{\partial}$ problem on a strongly pseudoconvex, smooth domain $\Omega$ that contains $D$ (see \cite{LS1}, or for more details \cite{R}). One has the following:

\begin{proposition} \label{reproduce and produce} There exists an $(n,n-1)$ form (in $w$) $C_2(w,z)$  that is $C^1$ in $w$ and depends smoothly on the parameter $z \in \overline{D}$ so that the following hold for the operator $\mathbf{C}=\mathbf{C}_1+\mathbf{C}_2$:

\begin{enumerate}[label=(\roman*)]
\item $\mathbf{C}(f)(z)=F(z)$ for $F$  holomorphic on $D$ and continuous on $\overline{D}$, where $f=F \lvert_{bd}$;

\item $\mathbf{C}(f)(z)$ is holomorphic for $f \in L^1(bD).$

\end{enumerate}

Here, $$\mathbf{C}_2(f)(z)=\int_{w \in bD}  f(w)  C_2(w,z).$$

\end{proposition}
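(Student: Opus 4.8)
The plan is to take $\mathbf{C}_2:=-\,T_\Omega\circ\overline{\partial}_z\mathbf{C}_1$, where $T_\Omega$ is a $\overline{\partial}$-solving operator on an auxiliary smooth strongly pseudoconvex domain $\Omega\supset\overline{D}$. The key observation — the one that forces the correction term to be regular — is that the generating form $\eta(w,z)=G(w,z)/g(w,z)$ is \emph{holomorphic in $z$ in a neighbourhood of the diagonal}: where $|w-z|\le c/2$ one has $\chi\equiv 1$, so $g(w,z)=-P_w(z)$ and $G(w,z)=\sum_j\frac{\partial\rho}{\partial w_j}(w)\,dw_j-\tfrac12\sum_{j,k}\frac{\partial^2\rho}{\partial w_j\partial w_k}(w)(w_k-z_k)\,dw_j$, and the Levi polynomial and the coefficients of $G$ are affine — hence holomorphic — in $z$. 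The only part of $G$ that fails to be holomorphic in $z$, namely $(1-\chi)\sum_j(\overline{w}_j-\overline{z}_j)\,dw_j$, is supported in $\{|w-z|\ge c/2\}$. Consequently the form-valued kernel $\mathcal{K}(w,z):=\overline{\partial}_z\!\left[j^*\!\left(\eta\wedge(\overline{\partial}\eta)^{n-1}\right)(w,z)\right]$ vanishes identically for $|w-z|\le c/2$; it is therefore non-singular, is $C^\infty$ in $z$ (its $z$-dependence on $\{|w-z|\ge c/2\}$ enters only through the affine Levi polynomial and the smooth cutoff $\chi$), and is as regular in $w$ as the Cauchy-Fantappi\'e kernel is away from the diagonal.

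Granting this, differentiating under the integral sign — valid for $z\in D$ since there the integrand is smooth in $z$ uniformly for $w\in bD$ — gives $\overline{\partial}_z\mathbf{C}_1(f)(z)=\int_{bD}f(w)\,\mathcal{K}(w,z)=:\alpha_f(z)$. Using the lower bound \eqref{2} one checks that $g(w,z)\neq 0$ on the support of $\mathcal{K}$ even for $z$ slightly outside $\overline{D}$, so $\mathcal{K}$, and hence $\alpha_f$, is a $(0,1)$-form that is $C^\infty$ in $z$ on a neighbourhood $U$ of $\overline{D}$, with $\|\alpha_f\|_{C^m(\overline U)}\lesssim_m\|f\|_{L^1(bD)}$; moreover $\alpha_f$ is $\overline{\partial}_z$-closed on $U$, which is immediate since $\mathcal{K}$ is already of the form $\overline{\partial}_z(\cdots)$. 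Fix a bounded smooth strongly pseudoconvex $\Omega$ with $\overline{D}\subset\Omega\subset\overline{\Omega}\subset U$ (such $\Omega$ exists: smooth out a strictly plurisubharmonic function agreeing with $\rho$ near $bD$ and take a slightly enlarged sublevel set), and let $T_\Omega$ be a linear $\overline{\partial}$-solution operator on $\Omega$, bounded on continuous $(0,1)$-forms, regular in the $C^\infty$ scale, with $\overline{\partial}T_\Omega\beta=\beta$ for $\overline{\partial}$-closed $\beta$ (see \cite{R}). Set $\mathbf{C}_2(f):=-\,T_\Omega(\alpha_f)$ on $D$; commuting $T_\Omega$ past the Bochner integral in $w$ gives $\mathbf{C}_2(f)(z)=\int_{bD}f(w)\,C_2(w,z)$ with $C_2(w,z)=-\,T_\Omega\!\left(\mathcal{K}(w,\cdot)\right)(z)$, which is $C^\infty$ in $z\in\overline{D}$ because $T_\Omega$ preserves $C^\infty$.

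Finally we verify the claims. For (ii): for any $f\in L^1(bD)$ and $z\in D$, $\overline{\partial}_z\mathbf{C}(f)=\overline{\partial}_z\mathbf{C}_1(f)+\overline{\partial}_z\mathbf{C}_2(f)=\alpha_f-\overline{\partial}_zT_\Omega(\alpha_f)=\alpha_f-\alpha_f=0$, and since $\mathbf{C}(f)\in C^1(D)$ this makes $\mathbf{C}(f)$ holomorphic. For (i): if $f=F|_{bD}$ with $F$ holomorphic on $D$ and continuous on $\overline{D}$, Proposition \ref{reproduce holo} gives $\mathbf{C}_1(f)=F$ on $D$, so $\alpha_f=\overline{\partial}_zF=0$ on $D$, hence $\alpha_f\equiv 0$ on $U$ by continuity, hence $\mathbf{C}_2(f)=-T_\Omega(0)=0$ and $\mathbf{C}(f)=F$. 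The remaining — and only genuinely technical — point is the assertion that $C_2(w,z)$ is $C^1$ in $w$: since the $w$-dependence of $C_2$ is inherited entirely from $\mathcal{K}(w,\cdot)$, this reduces to a careful accounting of the regularity in $w$ of the $j^*$-restricted Cauchy-Fantappi\'e kernel off the diagonal under the hypothesis $\rho\in C^3$. I expect this bookkeeping, together with pinning down the precise $\overline{\partial}$-solution operator $T_\Omega$ on $\Omega$ with mapping properties compatible with $L^1$ data and the stated smoothness of $C_2$, to be the main obstacle; the identities in (i) and (ii) are then purely formal consequences of $\overline{\partial}T_\Omega=\mathrm{id}$ and the reproducing property of $\mathbf{C}_1$, and are carried out as in \cite{LS1,R}.
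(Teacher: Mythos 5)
Your overall construction---setting $\mathbf{C}_2=-\,T_\Omega\circ\overline{\partial}_z\mathbf{C}_1$ on an auxiliary smooth strongly pseudoconvex $\Omega\supset\overline{D}$, and observing that $\mathcal{K}:=\overline{\partial}_z\bigl[j^*(\eta\wedge(\overline{\partial}\eta)^{n-1})\bigr]$ vanishes for $|w-z|\le c/2$ because $g=-P_w(z)$ and the coefficients of $G$ are polynomial, hence holomorphic, in $z$ there---is the standard Kerzman--Stein/Range construction that the paper invokes by citation to \cite{LS1,R}. Your verification of part (ii) is a correct formal consequence of $\overline{\partial}T_\Omega=\mathrm{id}$ on $\overline{\partial}$-closed forms.

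The verification of part (i), however, has a genuine gap. From $\mathbf{C}_1(f)=F$ on $D$ you correctly deduce $\alpha_f=\overline{\partial}_zF=0$ on $D$, and by continuity $\alpha_f=0$ on $\overline{D}$. But the step ``hence $\alpha_f\equiv 0$ on $U$ by continuity'' is not justified: $U$ is an \emph{open} neighbourhood of $\overline{D}$, and vanishing on the closed set $\overline{D}$ says nothing about the shell $U\setminus\overline{D}$. For $z$ in that shell the identification $\alpha_f(z)=\overline{\partial}_z\mathbf{C}_1(f)(z)$ breaks down---indeed $\mathbf{C}_1(f)(z)$ need not even be defined there, since \eqref{2} only guarantees $\mathrm{Re}\,g(w,z)>0$ for $z\in D$, and for $z$ just outside $\overline{D}$ with $w\in bD$ nearby one can have $g(w,z)=0$. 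The form $\alpha_f$, defined on $U$ by the non-singular integral $\int_{bD}F(w)\mathcal{K}(w,z)$, has no reason to vanish in the shell. Since $T_\Omega(\alpha_f)\big|_D$ depends on $\alpha_f$ over all of $\Omega$ and not just over $D$, knowing $\alpha_f|_D=0$ only tells you that $\mathbf{C}_2(f)=-T_\Omega(\alpha_f)\big|_D$ is \emph{some} holomorphic function on $D$, not that it is zero. So (i) is precisely \emph{not} a purely formal consequence of $\overline{\partial}T_\Omega=\mathrm{id}$ plus Proposition~\ref{reproduce holo}, contrary to what you assert in your closing paragraph; it is the one point, beyond the regularity bookkeeping you flag, where a genuine additional argument (as carried out in \cite{LS1,R}) is needed.
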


Note that importantly 
\begin{equation} \sup_{z \in \overline{D}, w \in bD} |C_2(w,z)|<\infty. \label{3} \end{equation} 

Thus, $\mathbf{C}$ is an operator that produces and reproduces holomorphic functions from boundary data. 

\vspace{0.25 cm}

Next, we proceed to define the relevant quasi-metric on the boundary of $D$ for our analysis. Let $d(w,z)=|g(w,z)|^{1/2}$. Then $d(w,z)$ satisfies all the properties of a quasi-metric or quasi-distance. In particular, one has the following, as in \cite{LS1}:
\begin{proposition}
Let $d(w,z)=|g(w,z)|^{1/2}$. Then the following hold for $w,z,\zeta \in bD$:
\begin{enumerate}[label=(\roman*)]
\item $d(w,z) \geq 0$ and $d(w,z)=0$ iff $w=z$;

\item $d(w,z) \approx d(z,w)$;

\item $d(w,z)\lesssim d(w,\zeta)+d(\zeta,z).$
\end{enumerate}
\end{proposition}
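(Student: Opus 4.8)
The plan is to reduce the verification of (i)--(iii) to a small neighborhood of the diagonal and there exploit a sharp two-sided estimate for $g$ obtained from a second-order Taylor expansion of $\rho$. First I would record two elementary facts. Since $\rho \equiv 0$ on $bD$, inequality \eqref{2} (which extends to $\overline{D}$ by continuity) gives $\text{Re}\, g(w,z) \gtrsim |w-z|^2$ for all $w,z \in bD$; and since $\overline{D}$ is bounded and $g$ is continuous, $|g(w,z)| \lesssim 1$. These crude bounds already dispose of the case in which at least one of the Euclidean distances $|w-z|, |w-\zeta|, |\zeta-z|$ exceeds a fixed constant: then at least one of the quantities appearing on the right-hand side of (iii) is bounded below and all are bounded above, so (ii) and (iii) are immediate and (i) is clear. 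It therefore remains to treat the case in which $w, z, \zeta$ all lie within a fixed small distance $\delta \le c/2$ of each other, so that the cutoff equals $1$ and $g(w,z) = -P_w(z)$, $g(z,w) = -P_z(w)$, $g(w,\zeta) = -P_w(\zeta)$, $g(\zeta,z) = -P_\zeta(z)$.

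The technical core is the sharp estimate
$$|g(w,z)| \approx |w-z|^2 + |\text{Im}\, g(w,z)| \qquad (w,z \in bD,\ |w-z| < \delta).$$
The lower bound is immediate from $|g| \ge \text{Re}\, g \gtrsim |w-z|^2$ together with $|g| \ge |\text{Im}\, g|$. For the upper bound I would Taylor-expand $\rho(z)$ about $w$ to second order; using $\rho(w)=\rho(z)=0$ and organizing the holomorphic and Hermitian pieces, one finds $\text{Re}(-P_w(z)) = \tfrac12 \mathcal{L}_w(z-w) + o(|z-w|^2)$, where $\mathcal{L}_w$ is the Levi form. Strong pseudoconvexity gives $\mathcal{L}_w(z-w) \approx |z-w|^2$, so $\text{Re}\, g(w,z) \lesssim |w-z|^2$, whence $|g| \le |\text{Re}\, g| + |\text{Im}\, g| \lesssim |w-z|^2 + |\text{Im}\, g(w,z)|$. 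Part (i) then follows at once: $d \ge 0$ always, while $d(w,z) = 0$ forces $|w-z|^2 \lesssim |g(w,z)| = 0$, and $g(w,w) = -P_w(w) = 0$.

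For (ii), I would expand $\partial\rho$ and the complex Hessian of $\rho$ about $w$ and substitute into $-P_z(w)$; the Hermitian (Levi-form) contributions are real, so the computation yields $-P_z(w) = -\big(-P_w(z)\big) + \mathcal{L}_w(z-w) + o(|z-w|^2)$, hence $\text{Im}\, g(z,w) = -\text{Im}\, g(w,z) + o(|z-w|^2)$. Feeding this into the sharp estimate gives $|g(z,w)| \lesssim |w-z|^2 + |\text{Im}\, g(w,z)| \approx |g(w,z)|$, and the reverse inequality follows by symmetry of the roles of $w$ and $z$. For (iii), the analogous expansion about $w$ produces the approximate cocycle identity
$$-P_w(z) = -P_w(\zeta) + \big(-P_\zeta(z)\big) + O\!\big(|z-\zeta|\,(|w-z| + |w-\zeta| + |z-\zeta|)\big).$$
Taking imaginary parts, bounding $|\text{Im}(-P_w(\zeta))| \le |g(w,\zeta)|$ and $|\text{Im}(-P_\zeta(z))| \le |g(\zeta,z)|$, and controlling the error term by AM--GM together with the Euclidean triangle inequality and the lower bound $|a-b|^2 \lesssim |g(a,b)|$ (e.g. $|z-\zeta||w-z| \le |z-\zeta|^2 + |w-z|^2 \lesssim |g(\zeta,z)| + |g(w,\zeta)|$), one obtains $|\text{Im}\, g(w,z)| \lesssim |g(w,\zeta)| + |g(\zeta,z)|$. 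Combining with $|w-z|^2 \lesssim |w-\zeta|^2 + |\zeta-z|^2 \lesssim |g(w,\zeta)| + |g(\zeta,z)|$ and the sharp estimate for $|g(w,z)|$ yields $|g(w,z)| \lesssim |g(w,\zeta)| + |g(\zeta,z)|$, i.e. $d(w,z)^2 \lesssim d(w,\zeta)^2 + d(\zeta,z)^2 \le \big(d(w,\zeta)+d(\zeta,z)\big)^2$, which is (iii).

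The main obstacle is the bookkeeping in (iii): one must verify that every error term produced by the two second-order Taylor expansions is genuinely dominated by $|g(w,\zeta)| + |g(\zeta,z)|$, and do so with constants uniform in $w,z,\zeta \in bD$. Uniformity is exactly where the hypotheses enter — compactness of $bD$ and uniform continuity of the second derivatives of the (here $C^3$, though $C^2$ suffices) defining function make all the $o(|z-w|^2)$ remainders uniform. A minor point to watch is the ``seam'' where $\chi$ transitions between $0$ and $1$, where $|w-z| \approx 1$; there every relevant quantity is comparable to a constant, so the crude estimates from the first step apply.
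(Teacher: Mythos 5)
The paper does not give a proof of this proposition: it states the result ``as in \cite{LS1}'' and delegates the argument to Lanzani--Stein. Your proposal is therefore a genuinely independent proof, and it is correct. The central tool, the two-sided estimate $|g(w,z)|\approx |w-z|^2 + |\mathrm{Im}\, g(w,z)|$ for $w,z\in bD$ near the diagonal, is established exactly as you describe from the second-order Taylor expansion of $\rho$ together with $\rho(w)=\rho(z)=0$ and the coercivity of the Levi form. Your expansion $-P_z(w)=P_w(z)+\mathcal{L}_w(z-w)+o(|z-w|^2)$ checks out (the Hermitian cross-terms produce exactly the Levi form and the purely holomorphic second-order pieces recombine into $+\tfrac12\sum\rho_{j,k}(w)(z_j-w_j)(z_k-w_k)$), so the symmetry (ii) follows. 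For (iii), the approximate cocycle identity and the bound of the error by $|g(w,\zeta)|+|g(\zeta,z)|$ via AM--GM, the Euclidean triangle inequality, and the lower bound $|a-b|^2\lesssim |g(a,b)|$ all go through. The reduction to the near-diagonal regime and the treatment of the cutoff transition zone are also fine.

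One remark worth making is that the argument in \cite{LS1,LS2} is usually organized around the slightly cleaner equivalent estimate $|g(w,z)|\approx |\mathrm{Im}\langle\partial\rho(w),w-z\rangle| + |w-z|^2$ on $bD$, i.e. using the \emph{linear} form rather than $\mathrm{Im}\, g$ itself. That formulation makes the decomposition underlying (iii) exactly linear,
$$\langle\partial\rho(w),w-z\rangle = \langle\partial\rho(w),w-\zeta\rangle + \langle\partial\rho(\zeta),\zeta-z\rangle + \langle\partial\rho(w)-\partial\rho(\zeta),\zeta-z\rangle,$$
so the only error term to control is the single commutator $\langle\partial\rho(w)-\partial\rho(\zeta),\zeta-z\rangle = O(|w-\zeta||\zeta-z|)$, avoiding the expansion of the quadratic pieces of the Levi polynomial. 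The two routes differ only by terms of size $O(|w-z|^2)$, which are absorbed anyway, so the distinction is purely cosmetic. Your observation that $C^2$ regularity (uniform continuity of the second derivatives, giving uniform $o(|w-z|^2)$ remainders) is already enough is also correct and is precisely why the same proposition survives in Section 3 with $g$ replaced by $g_\varepsilon$.
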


By considering the equivalent metric $d(w,z)+d(z,w)$, we might as well assume property (ii) holds with equality (and we make this assumption henceforth). Denote a ball in $bD$ in the quasi-metric with center $z$ and radius $\delta$ by $B(z,\delta)$. It is a fact that 
\begin{equation} \mu(B(z,\delta)) \approx \delta^{2n}.\label{4} \end{equation} where $\mu$ denotes induced Lebesgue surface measure on $bD$.

We also have the important estimates in \cite{LS1}:

\begin{equation} |w-z| \lesssim d(w,z) \lesssim |w-z|^{1/2}. \label{5} \end{equation}

We now introduce the Leray-Levi measure $\lambda$ on $bD$. This measure is defined 

$$\mathop{d\lambda}(w)=j^*(\partial \rho \wedge (\overline{\partial}\partial \rho)^{n-1})/(2 \pi \ii)^n.$$

The use of this measure is crucial in Lanzani and Stein's paper in the computation of an adjoint operator (they do not have apriori boundedness so the existence of the adjoint is not clear), but it turns out to be equivalent to Lebesgue measure in a certain strong sense. In particular, we have \begin{equation}\mathop{d \lambda}(w)=\Lambda(w) \mathop{d \mu}(w) \label{6}, \end{equation} where $\Lambda(w)$ is a function bounded above and below for all $w \in bD$. More explicitly, the function $\Lambda$ is given by

$$\Lambda(w)=(n-1)!(4 \pi)^{-n} |\det{\rho(w)}||\nabla \rho(w)|$$

\noindent where $\det(\rho(w))$ is the determinant of the $(n-1) \times (n-1)$ matrix of second derivatives:

$$\left\{\frac{\partial^2 \rho}{\partial{z_j} \partial{\overline{z}_k}}(w)\right\}_{j,k=1}^{n-1}$$

\noindent and $z=(z_1,z_2,\dots,z_n)$ is computed in a special coordinate system (see \cite{LS1} for details). Crucially, note that $\Lambda$ is Lipschitz, since $\rho$ is of class $C^3$. The importance of this fact will become clear in the proof of Lemma \ref{kernel difference}.

 An important result, also in \cite{LS1}, is as follows:
\begin{proposition}
The triple $(bD, d, \mathop{d \lambda})$ forms a space of homogeneous type (in the sense of the theory of singular integrals).
\end{proposition}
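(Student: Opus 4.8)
The plan is to verify the two defining ingredients of a space of homogeneous type: that $d$ is a quasi-metric on $bD$, and that $\lambda$ is a Borel measure which is doubling with respect to the $d$-balls $B(z,\delta)$. The first is already in hand, being precisely the statement of the preceding proposition (with the normalization making property (ii) an equality now assumed), so the work is entirely in checking measurability of the balls and the doubling condition for $\lambda$.

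First I would observe that, since $g(w,z)$ has $C^1$ (in particular continuous) coefficients, the set $B(z,\delta)=\{w\in bD:\,|g(w,z)|^{1/2}<\delta\}$ is open in $bD$, hence Borel; and by \eqref{6}, $\lambda$ is comparable to the induced surface measure $\mu$ through the two-sided bounded density $\Lambda$, so $\lambda$ is a positive Borel measure with $0<\lambda(bD)<\infty$ and
$$\lambda(B(z,\delta)) \approx \mu(B(z,\delta)) \qquad (z\in bD,\ \delta>0),$$
with constants depending only on $\inf_{bD}\Lambda$ and $\sup_{bD}\Lambda$. This reduces the doubling bound for $\lambda$ to one for $\mu$.

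Next I would establish $\mu(B(z,2\delta)) \lesssim \mu(B(z,\delta))$ uniformly in $z$ and $\delta$. For $\delta$ below a fixed threshold $\delta_0$ (which may be taken comparable to the $d$-diameter of the compact set $bD$), estimate \eqref{4} gives $\mu(B(z,\delta)) \approx \delta^{2n}$, so $\mu(B(z,2\delta)) \approx (2\delta)^{2n}=2^{2n}\delta^{2n} \approx \mu(B(z,\delta))$. For $\delta\geq \delta_0$ one has $B(z,2\delta)=bD$, so $\mu(B(z,2\delta))=\mu(bD)$ while $\mu(B(z,\delta))\geq \mu(B(z,\delta_0))\gtrsim \delta_0^{2n}$ is bounded below; thus the ratio stays bounded. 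Transferring back through the comparison with $\lambda$ gives $\lambda(B(z,2\delta))\leq C\,\lambda(B(z,\delta))$ for all $z,\delta$, which is the doubling property, and completes the verification that $(bD,d,\mathop{d\lambda})$ is a space of homogeneous type.

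As for obstacles: there is no substantial one internal to this argument — the real content is hidden in the volume estimate \eqref{4} and the density comparison \eqref{6}, both imported from \cite{LS1}. The only point calling for mild care is the split between small radii, where \eqref{4} applies directly, and large radii, where $d$-balls exhaust the compact boundary $bD$; once one records that \eqref{4} holds up to radii of order $\operatorname{diam}(bD)$ in the quasi-metric, the doubling estimate follows in every regime.
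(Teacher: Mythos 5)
Your argument is correct and is the standard route: the paper itself states this proposition with only a citation to \cite{LS1}, and the verification there likewise reduces doubling to the uniform volume estimate \eqref{4} together with the two-sided bound on the density $\Lambda$ from \eqref{6}. One small refinement: it is a bit cleaner to choose the threshold $\delta_0$ not at the scale of $\operatorname{diam}(bD)$ but at the (possibly smaller) scale where \eqref{4} is known to hold for $\delta\leq 2\delta_0$; then for $\delta>\delta_0$ one uses only $\mu(B(z,\delta))\geq\mu(B(z,\delta_0))\gtrsim\delta_0^{2n}$ and $\mu(B(z,2\delta))\leq\mu(bD)<\infty$, which avoids having to assert that \eqref{4} extends all the way to radii of order the diameter.
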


Note we could replace the Leray-Levi measure by induced Lebesgue measure and the above result would still be true, since the function $\Lambda(w)$ is bounded above and below uniformly. Below, for a measurable set $S \subset bD$, when we write $\mu(S)$, we refer to its Lebesgue surface measure, but in every case we could replace it by the Leray-Levi measure and the result would still be true. 

\vspace{0.25 cm}

We now want to essentially consider the restriction of the operator $\mathbf{C}$ to the boundary $bD$ and obtain a singular integral operator $\mathcal{C}$ that maps $L^p(bD)$ to $L^p(bD)$. Explicitly, Lanzani and Stein define

$$\mathcal{C}(f)(z)=\mathbf{C}(f)(z)\lvert_{bD}$$

\noindent when $f$ satisfies a type of H\"{o}lder continuity, namely 

$$|f(w_1)-f(w_2)|\lesssim d(w_1,w_2)^\alpha$$ for some $\alpha$ with $0<\alpha \leq 1$. In this case one can show $\mathbf{C}(f)$ extends to a continuous function on $\overline{D}$, so the above definition makes sense. The operator $\mathcal{C}$, while initially defined only on certain functions, actually extends to a bounded linear operator on $L^p(bD)$ (this is proven in \cite{LS1} using the $T(1)$ theorem). 

\vspace{0.25 cm}

Now, it is useful to break the operator $\mathbf{C}$ into a main term and an error term as follows: $$\mathbf{C}=\mathbf{C}^\sharp+\mathbf{R},$$ where $$\mathbf{C}^\sharp(f)(z)=\int_{bD} \frac{f(w)}{g(w,z)^n}\mathop{d \lambda}(w)$$ and $\mathbf{R}$ absorbs the error from replacing the numerator of the Cauchy-Fantappi\'e integral with the Leray-Levi measure as well as the error from the operator $\mathbf{C}_2$, which in fact has a bounded kernel by (\ref{2}). If we let $R(w,z)$ denote the kernel of the operator $\mathbf{R}$, we can obtain the crucial estimate (see \cite{LS1} again):
\begin{equation} |R(w,z)| \lesssim d(w,z)^{-2n+1}. \label{7} \end{equation}

Note that it is immediately obvious that the kernel of $\mathbf{C}^\sharp$ is bounded above by a multiple of $d(w,z)^{-2n}$, so we see that the operator $\mathbf{R}$ is ``less singular" in a sense than the operator $\mathbf{C}^\sharp$. 

\vspace{0.25 cm}

As before, for functions that satisfy the H\"{o}lder continuity condition as above, we can define
$$\mathcal{C}^\sharp(f)=\mathbf{C}^\sharp(f)\lvert_{bD}$$

\noindent and thus obtain the decomposition for the operator $\mathcal{C}$

$$\mathcal{C}=\mathcal{C}^\sharp+\mathcal{R}.$$

Finally, we define the so-called $A_p$ classes of weights on $bD$ for $1<p<\infty$ with respect to the quasi-metric:
\begin{definition}  A function $\sigma \in L^1(bD)$ that is positive almost everywhere is said to belong to the class $A_p$ if the following quantity is finite:

$$[\sigma]_p:= \sup_{B \subset bD} \left(\frac{1}{\mu(B)}\int_{B}\sigma\mathop{d\mu}\right)\left(\frac{1}{\mu(B)}\int_{B}\sigma^{\frac{-1}{p-1}}\mathop{d\mu}\right)^{p-1}$$

\noindent where $B$ is a ball in the quasi-metric $d$. 
\end{definition}

Additionally, we can define a suitable maximal function with respect to this quasi-metric on $bD$:

\begin{definition}
The \emph{Hardy-Littlewood Maximal Function} is defined, for $f \in L^1(bD)$
$$\mathcal{M}(f)(z)=\sup_{B\ni z}\frac{1}{\mu(B)}\int_{B}|f(w)|\mathop{d \mu(w)}$$

\noindent where as before $B$ is a ball in the quasi-metric $d$. 
\end{definition}

We also define $A_1$ weights with respect to the same quasi-metric:

\begin{definition}  A function $\sigma \in L^1(bD)$ that is positive almost everywhere is said to belong to the class $A_1$ if the following estimate holds for all $z \in bD$:
$$\mathcal{M}(\sigma)(z) \lesssim \sigma(z).$$
\end{definition}

We have now set up all the machinery we need to prove Theorem \ref{szego main detailed}.
\subsection{The Main Term}
We proceed to analyze the ``main term" $\mathcal{C}^\sharp$. It should be noted in what follows that in the $C^2$ case considered in \cite{LS1}, certain implicit constants depend on $\varepsilon$ and can even blow up as $\varepsilon \rightarrow 0$. This is not the case in the $C^3$ case, as there is only one $\varepsilon$, namely $\varepsilon=0$, for which there is no analog in the $C^2$ case. We have the following size and smoothness estimates for the kernel of  $\mathcal{C}^\sharp$ given in \cite{LS1}:
\begin{proposition}\label{kernel estimates}
Let $K(z,w)=g(w,z)^{-n}$ denote the kernel of $\mathcal{C}^\sharp$ with respect to the Leray-Levi measure. Then there holds:
\begin{enumerate}[label=(\roman*)]

\item $|K(z,w)|\lesssim d(w,z)^{-2n};$

\item $|K(z,w)-K(z,w')| \lesssim \frac{d(w,w')}{d(w,z)^{2n+1}}$ for $d(w,z) \geq c d(w,w');$

\item $|K(z,w)-K(z',w)| \lesssim \frac{d(z,z')}{d(w,z)^{2n+1}}$ for $d(w,z) \geq c d(z,z'),$

\end{enumerate}

\noindent where $c$ is some appropriately large constant.
\end{proposition}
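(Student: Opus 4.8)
The plan is to reduce the smoothness estimates (ii) and (iii) to one Lipschitz-type estimate on $g$, using the elementary identity $a^{-n}-b^{-n}=(b-a)(ab)^{-n}\sum_{m=0}^{n-1}a^m b^{n-1-m}$. The size estimate (i) needs no argument: by the definition $d(w,z)=|g(w,z)|^{1/2}$ (together with the symmetrization made just before the statement), $|K(z,w)|=|g(w,z)|^{-n}\approx d(w,z)^{-2n}$. For (ii), take $a=g(w,z)$ and $b=g(w',z)$. The first point to record is the comparability $|g(w,z)|\approx|g(w',z)|$ whenever $d(w,z)\ge c\,d(w,w')$; this follows at once from the quasi-triangle inequality provided $c$ exceeds a fixed multiple of the quasi-triangle constant, and this is what the ``appropriately large constant $c$'' in the statement should be taken to be. Granting this together with the Lipschitz-type bound
$$|g(w,z)-g(w',z)|\lesssim d(w,w')\,d(w,z),$$
valid under the same hypothesis $d(w,z)\ge c\,d(w,w')$, the identity gives $|K(z,w)-K(z,w')|\lesssim|b-a|\,|a|^{-n-1}\lesssim d(w,w')\,d(w,z)\cdot d(w,z)^{-2n-2}=d(w,w')\,d(w,z)^{-2n-1}$, which is (ii); estimate (iii) follows the same way with the two variables interchanged, using the corresponding Lipschitz bound $|g(w,z)-g(w,z')|\lesssim d(z,z')\,d(w,z)$.

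So the content is the displayed Lipschitz bound. For $d(w,z)\ge\delta_0$, with $\delta_0$ a fixed small constant, it is trivial: $g$ is Lipschitz in its first variable on the relevant region (this uses $\rho\in C^{2,1}$, hence $C^3$), so $|g(w,z)-g(w',z)|\lesssim|w-w'|\lesssim d(w,w')\lesssim d(w,w')\,d(w,z)$ by \eqref{5}. When $d(w,z)<\delta_0$, the hypothesis $d(w,w')\le d(w,z)/c$ forces $|z-w|$, $|z-w'|$, $|w-w'|$ all to be small, so $\chi\equiv 1$ at the relevant pairs and $g(w,z)=-P_w(z)$, $g(w',z)=-P_{w'}(z)$. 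I would then expand $P_w(z)-P_{w'}(z)$ term by term, bounding the differences of coefficients of $\rho$ by $|\partial\rho/\partial w_j(w)-\partial\rho/\partial w_j(w')|\lesssim|w-w'|$ and $|\partial^2\rho/\partial w_j\partial w_k(w)-\partial^2\rho/\partial w_j\partial w_k(w')|\lesssim|w-w'|$, and splitting each factor as $z_j-w_j=(z_j-w'_j)-(w_j-w'_j)$. The second coefficient bound is exactly where $\rho\in C^3$ is used — it says the second-derivative coefficients of $P_w$ are Lipschitz in $w$ — so that the term $[\partial^2\rho/\partial w_j\partial w_k(w)-\partial^2\rho/\partial w_j\partial w_k(w')](z_j-w_j)(z_k-w_k)$ is $O(|w-w'|\,|z-w|^2)\lesssim d(w,w')\,d(w,z)$ rather than merely $O(|z-w|^2)$.

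After this bookkeeping, every term is manifestly $O(|w-w'|(|z-w|+|z-w'|))\lesssim d(w,w')\,d(w,z)$ — using \eqref{5} and $d(w',z)\approx d(w,z)$ — with exactly one exception: the first-order term $\langle\partial\rho(w'),w-w'\rangle$. The observation that makes the whole estimate work is that this term is quadratically small in $d(w,w')$: since $\chi\equiv 1$ at $(w',w)$, one has $g(w',w)=-P_{w'}(w)=-\langle\partial\rho(w'),w-w'\rangle-\tfrac12\sum_{j,k}\tfrac{\partial^2\rho}{\partial w'_j\partial w'_k}(w')(w_j-w'_j)(w_k-w'_k)$, while $|g(w',w)|\approx d(w,w')^2$; hence $|\langle\partial\rho(w'),w-w'\rangle|\lesssim d(w,w')^2+|w-w'|^2\lesssim d(w,w')^2\lesssim d(w,w')\,d(w,z)$, the last step because $d(w,w')\le d(w,z)/c$. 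Summing gives $|g(w,z)-g(w',z)|\lesssim d(w,w')\,d(w,z)$. For (iii) the analysis is slightly easier, since $z$ enters $P_w(z)$ only through the factors $z_j-w_j$ and the coefficients of $\rho$ are untouched; the lone dangerous term is $\langle\partial\rho(w),z-z'\rangle$, which I would write as $\langle\partial\rho(z),z-z'\rangle+\langle\partial\rho(w)-\partial\rho(z),z-z'\rangle$ and control using $|\langle\partial\rho(z),z-z'\rangle|\lesssim d(z,z')^2$ (the same estimate, now at the pair $(z,z')$) together with $O(|w-z|\,|z-z'|)\lesssim d(w,z)\,d(z,z')$.

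I expect the one genuinely non-formal ingredient — and hence the main obstacle — to be the boundary estimate $|\langle\partial\rho(\zeta),\zeta-\zeta'\rangle|\lesssim d(\zeta,\zeta')^2$ for $\zeta,\zeta'\in bD$, together with the fiddly (but routine) verification that $d(w,z)\ge c\,d(w,w')$ really confines the entire computation to the region $\chi\equiv 1$, where $g$ is literally $-P$. Everything else is Taylor expansion and applications of \eqref{5}. This is, in essence, the computation in \cite{LS1} (and, earlier, \cite{R}), specialized to the single generating form $\eta$ and the parameter value $\varepsilon=0$ relevant here.
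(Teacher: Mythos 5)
The paper does not prove this proposition: it is stated as a result ``given in \cite{LS1},'' and the authors simply cite Lanzani--Stein (who in turn follow Range) without reproducing the argument. Your sketch is a correct reconstruction of that cited argument, and all the right ingredients are present: the elementary identity for $a^{-n}-b^{-n}$, the comparability $|g(w,z)|\approx|g(w',z)|$ under $d(w,z)\ge c\,d(w,w')$ (a consequence of the quasi-triangle inequality), the Lipschitz-type bound $|g(w,z)-g(w',z)|\lesssim d(w,w')\,d(w,z)$, and, crucially, the boundary estimate $|\langle\partial\rho(\zeta),\zeta-\zeta'\rangle|\lesssim d(\zeta,\zeta')^2$ that tames the otherwise dangerous first-order term. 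The one place you used $C^3$ (Lipschitz continuity of the second derivatives $\rho_{j,k}$) is exactly where this hypothesis is needed, which is consistent with the fact that in the $C^2$ case Lanzani--Stein must instead work with the mollified $\tau_{j,k}^\varepsilon$ and track $\varepsilon$-dependence. One small slip of wording: you write ``$\rho\in C^{2,1}$, hence $C^3$,'' which is backwards; you mean that $\rho\in C^3$ implies $\rho\in C^{2,1}$, which is what the argument uses.
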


Lanzani and Stein also prove the following result by invoking the $T(1)$ theorem:
\begin{theorem}\label{T(1)}
The operator $\mathcal{C}^\sharp$ is bounded on $L^2(bD)$.
\end{theorem}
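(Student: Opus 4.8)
The plan is to deduce the $L^2(bD)$ boundedness of $\mathcal{C}^\sharp$ from the $T(1)$ theorem for singular integral operators on spaces of homogeneous type, applied to $(bD,d,\lambda)$ (equivalently $(bD,d,\mu)$, by \eqref{6}). The kernel conditions are already in hand: combining Proposition \ref{kernel estimates} with the volume estimate \eqref{4}, the kernel $K(z,w)=g(w,z)^{-n}$ is a standard Calder\'on--Zygmund kernel on this space, since $|K(z,w)|\lesssim d(z,w)^{-2n}\approx \lambda\bigl(B(z,d(z,w))\bigr)^{-1}$ and parts (ii)--(iii) of Proposition \ref{kernel estimates} provide the H\"ormander-type smoothness in each variable separately. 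Thus the $T(1)$ theorem reduces the boundedness of $\mathcal{C}^\sharp$ to three assertions: (a) $\mathcal{C}^\sharp$ satisfies the weak boundedness property; (b) $\mathcal{C}^\sharp(1)\in\mathrm{BMO}(bD)$; and (c) $(\mathcal{C}^\sharp)^*(1)\in\mathrm{BMO}(bD)$.

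For (b) I would trade the main term for the error term using the reproducing property of the full operator. Write $\mathcal{C}^\sharp=\mathcal{C}-\mathcal{R}$; by \eqref{7} the error kernel satisfies $|R(z,w)|\lesssim d(z,w)^{-2n+1}$, and a dyadic decomposition of $bD$ into $d$-annuli about $z$, together with \eqref{4} and the compactness of $bD$, gives $\sup_{z\in bD}\int_{bD}|R(z,w)|\,d\lambda(w)<\infty$; hence $\mathcal{R}$ maps $L^\infty(bD)$ boundedly into itself. On the other hand the constant function $1$ is trivially H\"older continuous in $d$, so $\mathcal{C}(1)=\mathbf{C}(1)\lvert_{bD}$, and Proposition \ref{reproduce and produce}(i) with $F\equiv 1$ forces $\mathbf{C}(1)\equiv 1$ on $D$, whence $\mathcal{C}(1)=1$. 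Therefore $\mathcal{C}^\sharp(1)=1-\mathcal{R}(1)\in L^\infty(bD)\subset\mathrm{BMO}(bD)$, which is more than enough.

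Items (a) and (c) are where the genuine work lies, and I expect them to be the main obstacle, since, unlike (b), they cannot be read off from the size and smoothness bounds but require real cancellation coming from the explicit near-diagonal structure $g(w,z)=-P_w(z)$. For the weak boundedness property one tests $\mathcal{C}^\sharp$ against $L^\infty$-normalized bumps $\phi,\psi$ adapted to a ball $B=B(z_0,r)$ (supported in $B$, with $d$-Lipschitz constant $\lesssim r^{-1}$), splits the integration into the far region $d(z,w)\gtrsim r$ (controlled by the size bound $|K|\lesssim d(z,w)^{-2n}$ and a dyadic sum) and the near-diagonal region, and is then left with a term of the form $\phi(z)\int_{d(z,w)<\varepsilon r}K(z,w)\,d\lambda(w)$ whose boundedness must be extracted from the holomorphic-polynomial form of the Levi polynomial $P_w$, e.g.\ by integration by parts (Stokes' theorem) on $bD$. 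For (c), the kernel of $(\mathcal{C}^\sharp)^*$ is, up to a bounded Lipschitz factor built from $\Lambda$, the conjugate $\overline{g(w,z)^{-n}}$ with the roles of the two arguments of $g$ interchanged relative to $K$; this carries no holomorphic-reproducing interpretation, so $(\mathcal{C}^\sharp)^*(1)(w)=\Lambda(w)\int_{bD}\overline{g(w,z)^{-n}}\,d\mu(z)$ must be estimated directly, again by a Stokes/integration-by-parts argument isolating the leading part of the relevant Levi polynomial modulo less singular contributions handled as in (b). These computations are precisely the ones carried out by Lanzani and Stein, so one may alternatively simply invoke the corresponding result in \cite{LS1}.
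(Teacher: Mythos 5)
The paper offers no proof of this theorem beyond the attribution in the sentence preceding it: the result is quoted directly from \cite{LS1}, where it is obtained via the $T(1)$ theorem on the homogeneous-type space $(bD,d,\lambda)$. Your proposal reconstructs exactly that route---Calder\'on--Zygmund kernel estimates from Proposition \ref{kernel estimates} plus \eqref{4}, the reduction to WBP and the two BMO conditions, and the observation $\mathcal{C}^\sharp(1)=1-\mathcal{R}(1)\in L^\infty$ via the reproducing property and the integrable remainder kernel---and, for the parts requiring genuine cancellation, you defer to the computations in \cite{LS1}, which is what the paper itself does.
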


Theorem \ref{T(1)} and Proposition \ref{kernel estimates} demonstrate that the operator $\mathcal{C}^\sharp$ is Calder\'{o}n-Zygmund in the sense of spaces of homogeneous type, and consequently the weighted theory of real-variable harmonic analysis applies to this case. Thus, we have the following result:

\begin{theorem}\label{c sharp bound}
Let $\sigma \in A_p$, where $A_p$ is defined as above. The operator $\mathcal{C}^\sharp$ is bounded from $L^p_\sigma(bD)$ to $L^p_\sigma(bD)$, $1<p<\infty$.
\begin{proof}
This is an easy consequence of classical singular integral theory on spaces of homogeneous type. The only remark that needs to be made is that the equivalence of the Leray-Levi measure and Lebesgue measure in (\ref{6}) must be invoked because the kernel above is with respect to Leray-Levi measure, not Lebesgue measure. In particular, if $\sigma \in A_p$ as we have defined it, then $\sigma$ is in $A_p$ with respect to the Leray-Levi measure. By Calder\'{o}n-Zygmund theory on spaces of homogeneous type, the operator $\mathcal{C}^\sharp$ is bounded on $L^p(bD,\sigma \mathop{d \lambda})$, and hence bounded on $L^p(bD, \sigma \mathop{d \mu})$ by the equivalence of the measures.

\end{proof}

\end{theorem}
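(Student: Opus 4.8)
The plan is to recognize $\mathcal{C}^\sharp$ as a Calder\'on--Zygmund operator on the space of homogeneous type $(bD,d,\mathop{d\lambda})$ and then quote the standard weighted norm inequalities for such operators, the only genuine point of care being the passage between Lebesgue surface measure $\mu$ and the Leray--Levi measure $\lambda$. First I would verify the three defining properties of a Calder\'on--Zygmund operator in this setting. The underlying space $(bD,d,\mathop{d\lambda})$ is of homogeneous type by the proposition above, and by \eqref{4} together with \eqref{6} the balls satisfy $\lambda(B(z,\delta))\approx\mu(B(z,\delta))\approx\delta^{2n}$; hence the size estimate of Proposition \ref{kernel estimates}(i) reads $|K(z,w)|\lesssim d(z,w)^{-2n}\approx \lambda(B(z,d(z,w)))^{-1}$, the standard CZ size bound against $\mathop{d\lambda}$. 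The smoothness estimates (ii)--(iii) of Proposition \ref{kernel estimates} are Lipschitz (H\"older exponent $1$) in each variable relative to $d$, so in particular the H\"ormander condition holds in both arguments. Finally, $\mathcal{C}^\sharp$ is bounded on $L^2(bD,\mathop{d\mu})$ by Theorem \ref{T(1)}, hence on $L^2(bD,\mathop{d\lambda})$ since $\lambda\approx\mu$. Thus $\mathcal{C}^\sharp$ is a Calder\'on--Zygmund operator on $(bD,d,\mathop{d\lambda})$.

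Next I would invoke the weighted theory of Calder\'on--Zygmund operators on spaces of homogeneous type. Such an operator is of weak type $(1,1)$ with respect to $\mathop{d\lambda}$, and the classical Coifman--Fefferman good-$\lambda$ inequality (or, equivalently, a sparse domination argument; both are available in the homogeneous-type category via the Coifman--Weiss framework and its descendants) yields $\|\mathcal{C}^\sharp f\|_{L^p(w\,\mathop{d\lambda})}\lesssim\|f\|_{L^p(w\,\mathop{d\lambda})}$ for every $1<p<\infty$ and every $w$ in the Muckenhoupt class $A_p(\mathop{d\lambda})$ defined relative to the balls of $d$. Nothing in this step is specific to our operator beyond what was checked in the previous paragraph.

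It then remains to translate between the two measures. Since $\mathop{d\lambda}=\Lambda\,\mathop{d\mu}$ with $\Lambda$ bounded above and below, the averages appearing in the $A_p$ characteristic computed with respect to $\mu$ are comparable, over every ball, to those computed with respect to $\lambda$; hence $\sigma\in A_p$ (as defined in this paper via $\mu$) if and only if $\sigma\in A_p(\mathop{d\lambda})$. Likewise $\|f\|_{L^p(\sigma\,\mathop{d\mu})}\approx\|f\|_{L^p(\sigma\,\mathop{d\lambda})}$. Combining these equivalences with the previous paragraph gives $\|\mathcal{C}^\sharp f\|_{L^p_\sigma(bD)}\lesssim\|f\|_{L^p_\sigma(bD)}$, which is the assertion.

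I do not expect a real obstacle here: the mathematical content is entirely standard once Proposition \ref{kernel estimates} and Theorem \ref{T(1)} are granted. The one place that prevents the argument from being completely mechanical --- and the reason the statement is phrased with care --- is that the kernel bounds are naturally expressed against the Leray--Levi measure while the weight class $A_p$ is defined against surface measure, so one must appeal to \eqref{6} to reconcile the two definitions; the two-sided bound on $\Lambda$ makes this harmless.
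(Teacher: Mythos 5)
Your argument is correct and is essentially the same as the paper's: recognize $\mathcal{C}^\sharp$ as a Calder\'on--Zygmund operator on the space of homogeneous type $(bD,d,\mathop{d\lambda})$ via Proposition \ref{kernel estimates} and Theorem \ref{T(1)}, quote the weighted Calder\'on--Zygmund theory, and use the two-sided bound on $\Lambda$ from \eqref{6} to pass between the $A_p$ classes and $L^p$ norms relative to $\mathop{d\lambda}$ and $\mathop{d\mu}$. You simply spell out the verification in more detail than the paper, which compresses it into one remark.
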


\subsection{The Error Terms}
Let $\mathcal{C}^*$ denote the adjoint of $\mathcal{C}$ with respect to Lebesgue measure. We now proceed to deal with the error terms $\mathcal{R}$ as well as $\mathcal{C}^*-\mathcal{C}$. Both of these terms will play a role in the proof of the main theorem in the subsequent section. We know from \eqref{7} that the kernel of the ``remainder operator" $\mathcal{R}$ is ``less singular" than the main operator $\mathcal{C}^\sharp$. We proceed to show that this is also true for the kernel of the ``difference operator" $\mathcal{C}^*-\mathcal{C}.$ First we need a preliminary proposition, which is similar to an argument that can be found in \cite{R}:

\begin{proposition}\label{diff of Levi}
The following estimate holds for $w,z \in bD$:
$$|g(w,z)-\overline{g(z,w)}| \lesssim |w-z|^3.$$
\begin{proof}
It suffices to prove the estimate when $|w-z|\leq c/2$, so we can assume $g(w,z)=-P_w(z)$ and $\overline{g(z,w)}=-\overline{P_z(w)}$. To avoid cumbersome notation, we use the shorthand $\frac{\partial \rho}{\partial w_j}(w)=\rho_j(w)$ and $ \frac{\partial^2 \rho}{\partial w_j \partial w_k}(w)=\rho_{j,k}(w).$ Recall the Levi polynomial at $w$ is defined as

$$P_w(z)= \sum_{j=1}^{n} \rho_j(w)(z_j-w_j) +\frac{1}{2}\sum_{j,k=1}^{n}\rho_{j,k}(w)(z_j-w_j)(z_k-w_k).$$ We also define the Levi form

$$L_w(z)=\sum_{j,k=1}^{n}\dfrac{\partial^2 \rho}{\partial w_j \partial \overline{w}_k}(w)(z_j-w_j)(\overline{z}_k-\overline{w}_k).$$

 The Taylor expansion (in $w$) of $\rho_j(w)$ about $w=z$ is 
$$\rho_j(w)=\rho_j(z)+ \sum_{k=1}^n\rho_{j,k}(z)(w_k-z_k) + \sum_{k=1}^n\frac{\partial^2 \rho}{\partial z_j \partial \overline{z_k}}(z)(\overline{w_k}-\overline{z_k})+\mathcal{O}(|w-z|^2)$$ while the Taylor expansion of $\rho_{j,k}(w)$ gives $$\rho_{j,k}(w)=\rho_{j,k}(z)+\mathcal{O}(|w-z|).$$

 Substituting these Taylor expansions into $P_w(z)$, we obtain
 \begin{small}
 $$P_w(z)=\sum_{j=1}^{n} \rho_j(z)(z_j-w_j)-\frac{1}{2}\sum_{j,k=1}^n \rho_{j,k}(z)(w_j-z_j)(w_k-z_k)-
 \sum_{j,k=1}^{n}\frac{\partial^2 \rho}{\partial z_j \partial \overline{z_k}}(z)(w_j-z_j)(\overline{w_k}-\overline{z_k})+\mathcal{O}(|w-z|^3).$$
 \end{small}
 
 On the other hand, we have

$$\overline{P_z(w)}= \sum_{j=1}^{n} \overline{\rho_j(z)}(\overline{w_j}-\overline{z_j}) +\frac{1}{2}\sum_{j,k=1}^{n}\overline{\rho_{j,k}(z)}(\overline{w_j}-\overline{z_j})(\overline{w_k}-\overline{z_k}).$$ 

\noindent A computation shows
$$\overline{P_z(w)}-P_w(z)=2 \text{Re}P_z(w)+L_z(w)+\mathcal{O}(|w-z|^3).$$

\noindent Then just use the well-known fact that 
$$\rho(w)=\rho(z)+2 \text{Re}P_z(w)+L_z(w)+\mathcal{O}(|w-z|^3),$$ together
 with the fact that $\rho(z)=\rho(w)=0$ as $w,z \in bD$.

\end{proof}
\end{proposition}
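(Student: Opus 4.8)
The plan is to reduce the estimate to the regime $|w-z|\leq c/2$, where both $g$ and $\overline{g}$ simplify, and then prove the cubic bound by a careful Taylor expansion. In the complementary regime $|w-z|>c/2$ there is essentially nothing to do: since $D$ is bounded, $|w-z|$ is bounded above and below by positive constants there, and $|g(w,z)-\overline{g(z,w)}|$ is in any case uniformly bounded (indeed it vanishes identically once $|w-z|\geq c$, where $\chi\equiv 0$ and $g(w,z)=|w-z|^2=\overline{g(z,w)}$), so $|g(w,z)-\overline{g(z,w)}|\lesssim 1\lesssim |w-z|^3$. Hence I may assume $|w-z|\leq c/2$, where $\chi\equiv1$ forces $g(w,z)=-P_w(z)$ and $\overline{g(z,w)}=-\overline{P_z(w)}$, and the claim becomes $|P_w(z)-\overline{P_z(w)}|\lesssim|w-z|^3$.

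Next I would expand the coefficients of $P_w(z)$ in a Taylor series in $w$ about the point $w=z$. Since $\rho\in C^3$, the first derivatives $\rho_j$ are $C^2$ with bounded second derivatives on $\overline D$, so a first-order Taylor expansion gives $\rho_j(w)=\rho_j(z)+\sum_k\rho_{j,k}(z)(w_k-z_k)+\sum_k\rho_{j\bar k}(z)(\overline{w_k}-\overline{z_k})+O(|w-z|^2)$, where $\rho_{j\bar k}=\partial^2\rho/\partial z_j\partial\overline{z_k}$; likewise the $\rho_{j,k}$ are $C^1$ with bounded derivatives, so $\rho_{j,k}(w)=\rho_{j,k}(z)+O(|w-z|)$. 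Substituting these into the definition of $P_w(z)$ and collecting terms — using $(z_j-w_j)=-(w_j-z_j)$ and symmetrizing the double sums — I expect to arrive at the identity $P_w(z)=-P_z(w)-L_z(w)+O(|w-z|^3)$, where $L_z(w)=\sum_{j,k}\rho_{j\bar k}(z)(w_j-z_j)(\overline{w_k}-\overline{z_k})$ is the Levi form at $z$. Conjugating and rearranging then yields $\overline{P_z(w)}-P_w(z)=2\,\mathrm{Re}\,P_z(w)+L_z(w)+O(|w-z|^3)$.

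Finally, I would close the argument by invoking the second-order Taylor expansion of $\rho$ \emph{itself} about $z$. Again because $\rho\in C^3$, the third-order remainder is genuinely $O(|w-z|^3)$, and the expansion reads $\rho(w)=\rho(z)+2\,\mathrm{Re}\,P_z(w)+L_z(w)+O(|w-z|^3)$. Since $w,z\in bD$ we have $\rho(w)=\rho(z)=0$, whence $2\,\mathrm{Re}\,P_z(w)+L_z(w)=O(|w-z|^3)$; combined with the identity of the previous paragraph this gives $|P_w(z)-\overline{P_z(w)}|\lesssim|w-z|^3$, which is what was needed.

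The main obstacle — really the only subtlety — is the bookkeeping of orders of vanishing against the available regularity. One must check that $C^3$ is exactly enough: it guarantees that the first-order expansion of $\rho_j$ and the zeroth-order expansion of $\rho_{j,k}$ have \emph{$O$}-type (not merely $o$-type) remainders, and that the second-order expansion of $\rho$ has an $O(|w-z|^3)$ remainder, so that every discarded term is legitimately of size $|w-z|^3$. Beyond that, one has to organize the symmetrization of the quadratic terms so that the holomorphic quadratic part reassembles into $P_z(w)$ and the mixed part into $L_z(w)$; this is pure algebra with no analytic content.
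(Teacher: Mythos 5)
Your proposal is correct and follows the same route as the paper: reduce to $|w-z|\le c/2$, Taylor expand the coefficients $\rho_j$ and $\rho_{j,k}$ of $P_w(z)$ about $w=z$ (using exactly the $C^3$ regularity of $\rho$), assemble the result into $\overline{P_z(w)}-P_w(z)=2\,\mathrm{Re}\,P_z(w)+L_z(w)+O(|w-z|^3)$, and identify the right side with the second-order Taylor remainder of $\rho$, which vanishes since $\rho(w)=\rho(z)=0$ on $bD$. You are in fact slightly more explicit than the paper about the trivial large-distance regime $|w-z|>c/2$, but the core argument and the regularity bookkeeping are the same.
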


This proposition will allow us to prove the following lemma. Again, the argument is essentially from \cite{R}.

\begin{lemma}\label{kernel difference}
Let $K(z,w)$ denote the kernel of $(\mathcal{C}^\sharp)^*-\mathcal{C}^\sharp$ with respect to Lebesgue measure $\mathop{d \mu}$. Then the following estimate holds:
$$|K(z,w)| \lesssim d(w,z)^{-2n+1}.$$
\begin{proof} 
Here we need to come to grips with the distinction between the Leray-Levi measure $\mathop{d \lambda}$ and the Lebesgue measure $\mathop{d\mu}$. Note that if $(\mathcal{C}^\sharp)^\dagger$ denotes the adjoint of $\mathcal{C}^\sharp$ taken \emph{with respect to the Leray-Levi measure}, then we have the relation $(\mathcal{C}^\sharp)^\dagger=\Lambda  (\mathcal{C}^\sharp)^* \Lambda^{-1}$ (see \cite{LS1}). Let $K_L(w,z)$ denote the kernel, with respect to $\mathop{d\lambda}$, of the operator $(\mathcal{C} ^\sharp)^{\dagger}-\mathcal{C}^\sharp$. It is immediate that $K_L(w,z)=\overline{g(z,w)}^{-n}-g(w,z)^{-n}$. Compute to see
\begin{eqnarray*}
|K(z,w)|& = & \left|\Lambda(z)[\overline{g(z,w)}^{-n}\Lambda(w)]\Lambda^{-1}(w) )-g(w,z)^{-n}\Lambda(w)\right| \\
& = & \left|\Lambda(z) \overline{g(z,w)}^{-n}-g(w,z)^{-n}\Lambda(w)\right|\\
& \leq & |\Lambda(z)-\Lambda(w)| |\overline{g(z,w)}|^{-n}+ |\Lambda(w)||K_L(w,z)|\\
& \lesssim & |z-w| d(w,z)^{-2n} +|K_L(w,z)|\\
&  \lesssim & d(w,z)^{-2n+1} +|K_L(w,z)| . 
\end{eqnarray*}

\noindent Here we use the fact that $\Lambda$ is Lipschitz. Then, compute to see:
\begin{eqnarray*}
|K_L(z,w)|& = &  \left|\overline{g(z,w)}^{-n}-g(w,z)^{-n}\right| \\
& = & \left|\frac{g(w,z)^n-\overline{g(z,w)}^{n}}{g(w,z)^n\overline{g(z,w)}^{n}}\right|\\
& = & \left|\frac{\left(g(w,z)-\overline{g(z,w)}\right)\left(\sum_{t=0}^{n-1} (g(w,z))^t(\overline{g(z,w)})^{n-1-t}\right)}{g(w,z)^n\overline{g(z,w)}^{n}}\right|\\
& \lesssim & \frac{|g(w,z)-\overline{g(z,w)}|d(w,z)^{2n-2}}{d(w,z)^{4n}}\\
& \lesssim & d(w,z)^{-2n+1}
\end{eqnarray*}

\noindent where in the last estimation we used Proposition \ref{diff of Levi}.  
\end{proof}
\end{lemma}

One can show using a special coordinate system that
\begin{equation}\sup_{z \in bD} \int_{bD}d(w,z)^{-2n+1} \mathop{d \mu(w)} <\infty \label{8} \end{equation}
 (see \cite{LS1} or \cite{R}). This result can also be obtained by integrating over dyadic ``annuli" as we will later see. Thus, we see that $\mathcal{R}$ and $(\mathcal{C}^\sharp)^*-\mathcal{C}^\sharp$ have integrable kernels, while $\mathcal{C}^\sharp$ does not. 
 
 \vspace{0.25 cm}

Now we show the these kernel estimates are not only enough to guarantee boundedness on weighted $L^p$ spaces; they are actually enough to guarantee compactness which is much better. The following proposition allows for good control of the integration of an $A_1$ weight $\sigma$ against a kernel $K(z,w)$ which satisfies the size estimate above. 

\begin{proposition}\label{A_1 control}
Let $K(z,w)$ be a kernel measurable on $bD \times bD$ that satisfies the size estimate $|K(z,w)| \lesssim d(w,z)^{-2n+1}$, and let $\sigma \in A_1$. Then the following estimates hold for all $z,w \in bD$:

$$  \int_{B(z,\delta)}|K(z,w)| \sigma(w) \mathop{d \mu(w)}\lesssim \delta \sigma(z)$$

$$  \int_{B(w,\delta)}|K(z,w)| \sigma(z) \mathop{d \mu(z)} \lesssim \delta \sigma(w).$$

\begin{proof}
Break the region of integration up into dyadic annuli and estimate the integral as follows:
\begin{align*}
& \int_{B(z,\delta)}|K(z,w)| \sigma(w) \mathop{d \mu(w)} \\
& \lesssim  \int_{B(z,\delta)}d(w,z)^{-2n+1} \sigma(w) \mathop{d \mu(w)}\\
& = \sum_{i=0}^{\infty} \int_{B(z,2^{-i}\delta)\setminus B(z,2^{-(i+1)}\delta)}d(w,z)^{-2n+1} \sigma(w) \mathop{d \mu(w)}\\
& \leq  \sum_{i=0}^{\infty} \int_{B(z,2^{-i}\delta)\setminus B(z,2^{-(i+1)}\delta)}2^{(-(i+1)(-2n+1))}\delta^{(-2n+1)} \sigma(w) \mathop{d \mu(w)}\\
& \leq \sum_{i=0}^{\infty} 2^{(-(i+1)(-2n+1))}\delta^{(-2n+1)}\mu(B(z,2^{-i}\delta)) \frac{1}{\mu(B(z,2^{-i}\delta))}\int_{B(z,2^{-i}\delta)} \sigma(w) \mathop{d \mu(w)}\\
& \leq  \sum_{i=0}^{\infty} 2^{2n-1} 2^{-i} \delta \mathcal{M}({\sigma})(z)\\
& \lesssim  \delta \mathcal{M}(\sigma)(z)\\
& \lesssim  \delta \sigma(z).
\end{align*}

Note all implicit equivalences are independent of $w$ and $z$. The proof of the other statement is completely analogous.

\end{proof}

\end{proposition}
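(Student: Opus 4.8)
The plan is to prove the two displayed estimates by the elementary ``dyadic annulus'' decomposition of the ball of integration, using only the size bound on $K$ and the definition of an $A_1$ weight via the maximal function $\mathcal{M}$. First I would reduce to the first inequality, since the second is symmetric in the roles of $z$ and $w$ (the size estimate $|K(z,w)|\lesssim d(w,z)^{-2n+1}$ is symmetric up to the quasi-metric constant once we have normalized $d(w,z)=d(z,w)$). So fix $z\in bD$ and $\delta>0$, replace $|K(z,w)|$ by $d(w,z)^{-2n+1}$, and write
$$B(z,\delta)=\bigsqcup_{i=0}^{\infty}\big(B(z,2^{-i}\delta)\setminus B(z,2^{-(i+1)}\delta)\big).$$
On the $i$-th shell one has $d(w,z)\geq 2^{-(i+1)}\delta$, hence $d(w,z)^{-2n+1}\leq \big(2^{-(i+1)}\delta\big)^{-2n+1}$ (here $-2n+1<0$, so the bound is on the larger annulus, exactly as needed).

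Next I would estimate each shell integral by enlarging the domain to the full ball $B(z,2^{-i}\delta)$, inserting and removing the normalization $\mu(B(z,2^{-i}\delta))$, and recognizing the resulting average of $\sigma$ as being bounded by $\mathcal{M}(\sigma)(z)$:
$$\int_{B(z,2^{-i}\delta)\setminus B(z,2^{-(i+1)}\delta)} d(w,z)^{-2n+1}\sigma(w)\,d\mu(w)
\leq \big(2^{-(i+1)}\delta\big)^{-2n+1}\mu\!\big(B(z,2^{-i}\delta)\big)\,\mathcal{M}(\sigma)(z).$$
Then I would invoke the volume estimate \eqref{4}, $\mu(B(z,2^{-i}\delta))\approx (2^{-i}\delta)^{2n}$, so the product of the powers telescopes to $2^{2n-1}\,2^{-i}\,\delta$ up to a uniform constant. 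Summing the geometric series $\sum_i 2^{-i}=2$ gives the bound $\lesssim \delta\,\mathcal{M}(\sigma)(z)$, and finally the $A_1$ hypothesis $\mathcal{M}(\sigma)(z)\lesssim\sigma(z)$ converts this to $\lesssim\delta\,\sigma(z)$, which is the claim. The second estimate follows verbatim by swapping the names of the variables and using symmetry of $d$.

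The only genuinely delicate point—and the thing to be careful about rather than a deep obstacle—is making sure the exponent bookkeeping goes the right way: since $-2n+1$ is \emph{negative}, the supremum of $d(w,z)^{-2n+1}$ over the shell is attained at its \emph{inner} radius $2^{-(i+1)}\delta$, while the measure of the shell is controlled by the \emph{outer} radius $2^{-i}\delta$; the mismatch contributes only a harmless factor of $2^{2n-1}$ and the net power of $2^{-i}$ on the $i$-th term is $2^{-i}$ (strictly summable), not $2^{0}$ or worse. Everything else is routine: the decomposition, enlarging annuli to balls, Fubini-free manipulation of a single integral, and one geometric series. No compactness, no reverse Hölder, and no Calderón–Zygmund machinery is needed here; this proposition is purely a quantitative consequence of the size estimate plus $A_1$, and it is the lemma that will later feed into the compactness and ``space-improving'' arguments for $\mathcal{C}^*-\mathcal{C}$.
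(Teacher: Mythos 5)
Your proposal is correct and matches the paper's proof step for step: dyadic annulus decomposition, bounding the kernel by its supremum at the inner radius, enlarging to the outer ball, applying the volume estimate $\mu(B(z,r))\approx r^{2n}$, summing the geometric series, and finishing with the $A_1$ condition. Your explicit attention to the sign of $-2n+1$ and the resulting $2^{-i}$ decay is exactly the bookkeeping that makes the series converge, and the symmetry argument for the second inequality is the same ``completely analogous'' remark the paper makes.
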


Note if $K(z,w)$ is the kernel of an integral operator satisfying the size estimate of the previous proposition, then $K$ is ``integrable" in the sense that

\begin{equation}\sup_{z \in bD} \int_{bD}|K(z,w)| \mathop{d \mu(w)} <\infty,\label{9}\end{equation}

\noindent and obviously \eqref{9} still holds if the roles of $z$ and $w$ are interchanged. This can be seen by taking $\sigma=1$ and $\delta$ sufficiently large in Proposition \ref{A_1 control}. But in fact, we can say something slightly better. The proof of the following proposition is essentially a reprise of Proposition \ref{A_1 control} taking $\sigma=1$ with obvious modifications.

\begin{proposition}\label{better kernel}
Let $K(z,w)$ be a kernel measurable on $bD \times bD$ that satisfies the size estimate $|K(z,w)| \lesssim d(w,z)^{-2n+1}$, and let $\varepsilon \in [0, \frac{1}{2n-1})$. Then the following hold:
$$\sup_{z \in bD} \int_{bD}|K(z,w)|^{1+\varepsilon} \mathop{d \mu(w)} <\infty$$

$$\sup_{w \in bD} \int_{bD}|K(z,w)|^{1+\varepsilon} \mathop{d \mu(z)} <\infty.$$

\end{proposition}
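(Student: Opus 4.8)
The plan is to mimic the dyadic-annuli estimate from Proposition \ref{A_1 control}, but now with the kernel raised to the power $1+\varepsilon$ rather than multiplied by an $A_1$ weight. First I would fix $z \in bD$ and a radius $\delta$ larger than the diameter of $bD$ (so that $B(z,\delta) = bD$), and decompose $bD$ into the dyadic annuli $A_i := B(z,2^{-i}\delta) \setminus B(z,2^{-(i+1)}\delta)$ for $i \geq 0$. On each annulus $A_i$ the size estimate gives $|K(z,w)|^{1+\varepsilon} \lesssim d(w,z)^{(-2n+1)(1+\varepsilon)} \approx (2^{-i}\delta)^{(-2n+1)(1+\varepsilon)}$, and by \eqref{4} the measure of $A_i$ is at most $\mu(B(z,2^{-i}\delta)) \approx (2^{-i}\delta)^{2n}$. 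Multiplying and summing over $i$ yields a geometric series whose ratio is $2^{-[2n - (2n-1)(1+\varepsilon)]} = 2^{-[1 - (2n-1)\varepsilon]}$, which is strictly less than $1$ precisely because $\varepsilon < \frac{1}{2n-1}$. Hence the series converges, with a bound independent of $z$, giving the first claimed supremum estimate; the second follows by the same argument with the roles of $z$ and $w$ interchanged (using property (ii) of the quasi-metric, which we have normalized to equality).

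The key steps in order are: (1) reduce to the size estimate $|K(z,w)| \lesssim d(w,z)^{-2n+1}$; (2) cover $bD$ by dyadic annuli centered at the fixed point; (3) on each annulus bound $|K|^{1+\varepsilon}$ by the appropriate power of $2^{-i}\delta$ and the measure of the annulus by $(2^{-i}\delta)^{2n}$ via \eqref{4}; (4) sum the resulting geometric series, checking that the exponent $1 - (2n-1)\varepsilon$ is positive exactly under the hypothesis on $\varepsilon$; (5) observe the final constant is uniform in the base point, and repeat with variables swapped.

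I do not expect any genuine obstacle here: the proof is, as the excerpt itself notes, ``essentially a reprise of Proposition \ref{A_1 control} taking $\sigma = 1$ with obvious modifications.'' The only point requiring a moment's care is the bookkeeping of exponents to confirm that the geometric ratio is $< 1$ if and only if $\varepsilon \in [0, \frac{1}{2n-1})$ — this is where the stated range of $\varepsilon$ comes from and is the whole content of the lemma. At the endpoint $\varepsilon = \frac{1}{2n-1}$ the series becomes $\sum_i 1 = \infty$, matching the logarithmic divergence one expects, so the half-open interval is sharp for this method. One small subtlety is ensuring the annular decomposition genuinely exhausts $bD$ and that the innermost term ($i \to \infty$, i.e. $w \to z$) is controlled by the convergence of the tail rather than causing trouble — but since $\varepsilon \geq 0$ makes the integrand more singular at $w = z$, not less, and the sum is over $i \geq 0$ with ratio $< 1$, the small-scale behavior is exactly what the geometric series handles.
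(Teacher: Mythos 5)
Your proof is correct and is exactly the argument the paper has in mind when it remarks that the proposition is ``essentially a reprise of Proposition \ref{A_1 control} taking $\sigma=1$ with obvious modifications'': dyadic annuli around the base point, the size estimate on each annulus, the volume growth \eqref{4}, and a geometric series whose ratio is controlled precisely when $1-(2n-1)\varepsilon>0$. The exponent bookkeeping $(-2n+1)(1+\varepsilon)+2n = 1-(2n-1)\varepsilon$ is right, and the second supremum follows by symmetry of the quasi-metric.
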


As a consequence of this proposition, we can prove that an integral operator $\mathcal{K}$ that has a kernel with the above size estimate ``improves" $L^p$ spaces. This was noted before in \cite{KS2} using a slightly different approach. 

\begin{proposition}\label{Lp improvement}
Let $\mathcal{K}$ be an integral operator on $L^p(bD)$ with a kernel $K(z,w)$ that satisfies the size estimate $|K(z,w)| \lesssim d(w,z)^{-2n+1}.$ Then $\mathcal{K}$ maps $L^p(bD)$ to $L^{p+\varepsilon}(bD)$ boundedly for $p \geq 1$ and $\varepsilon \in [0, \frac{1}{2n-1}).$

\begin{proof}

We first demonstrate the result for $p=1$ and then show how this implies the result for $p>1$. Take $f \in L^1(bD)$ and $\varepsilon \in [0, \frac{1}{2n-1}).$ Then compute, using Minkowski's integral inequality and Proposition \ref{better kernel}:

\begin{eqnarray*}
\left( \int_{bD} \left| \int_{bD} K(z,w) f(w) \mathop{d \mu(w)} \right|^{1+\varepsilon} \mathop{d \mu(z)} \right)^{\frac{1}{1+ \varepsilon}}& \leq & \left(\int_{bD} \left( \int_{bD} |K(z,w)| |f(w)| \mathop{d \mu(w)} \right)^{1+\varepsilon} \mathop{d \mu(z)}\right)^{\frac{1}{1+\varepsilon}}\\
& \leq & \int_{bD} \left( \int_{bD} |K(z,w)|^{1+\varepsilon} \mathop{d \mu(z)} \right)^{\frac{1}{1+\varepsilon}} |f(w)| \mathop{d \mu(w)}\\
& \lesssim & ||f||_{L^1(bD)}.
\end{eqnarray*}

To obtain the result for $p>1$, proceed as follows, using H\"{o}lder's inequality with exponents $p$ and $q$:

\begin{align*}
& \left(\int_{bD} \left| \int_{bD} K(z,w) f(w) \mathop{d \mu(w)} \right|^{p+\varepsilon} \mathop{d \mu(z)}\right)^{\frac{1}{p+\varepsilon}}   \\
& \leq \left(\int_{bD} \left( \int_{bD} |K(z,w)|^{1/p}|K(z,w)|^{1/q} |f(w)| \mathop{d \mu(w)} \right)^{p+\varepsilon} \mathop{d \mu(z)}\right)^{\frac{1}{p+\varepsilon}}\\
 & \leq \left(\int_{bD} \left(\int_{bD}|K(z,w)| \mathop{d \mu(w)}\right)^{\frac{p+\varepsilon}{q}} \left(\int_{bD}|K(z,w)| |f(w)|^p \mathop{d \mu(w)}\right)^{\frac{p+\varepsilon}{p}} \mathop{d \mu(z)}\right)^{\frac{1}{p+\varepsilon}} \\
 & \lesssim  \left(\int_{bD} \left(\int_{bD}|K(z,w)| |f(w)|^p \mathop{d \mu(w)}\right)^{\frac{p+\varepsilon}{p}} \mathop{d \mu(z)}\right)^{\frac{1}{p+\varepsilon}}\\
&  =  \left(\left(\int_{bD} \left(\int_{bD}|K(z,w)| |f(w)|^p \mathop{d \mu(w)}\right)^{\frac{p+\varepsilon}{p}} \mathop{d \mu(z)}\right)^{\frac{p}{p+\varepsilon}}\right)^{\frac{1}{p}}\\
& \leq  \left(\int_{bD} \left(\int_{bD}|K(z,w)|^{1+\frac{\varepsilon}{p} } \mathop{d \mu(z)}\right)^{\frac{p}{p+\varepsilon}} |f(w)|^p\mathop{d \mu(w)}\right)^{\frac{1}{p}}\\
& \lesssim  ||f||_{L^p(bD)}.
\end{align*}

\noindent In the penultimate line, notice we apply Minkowski's integral inequality with exponent $\frac{p+\varepsilon}{p}=1+\frac{\varepsilon}{p}$ and with respect to measures $|f(w)|^p \mathop{d \mu(w)}$ and $\mathop{d \mu(z)}$.

\end{proof}

\end{proposition}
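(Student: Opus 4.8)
The plan is to reduce everything to the uniform integrability bounds of Proposition \ref{better kernel}, which give $\sup_{w}\int_{bD}|K(z,w)|^{1+\delta}\,d\mu(z)<\infty$ (and the symmetric statement) whenever $\delta<\frac{1}{2n-1}$. I would first handle the base case $p=1$. Writing $\mathcal{K}f(z)=\int_{bD}K(z,w)f(w)\,d\mu(w)$ and applying Minkowski's integral inequality in the variable $z$ with exponent $1+\varepsilon$, one bounds $\|\mathcal{K}f\|_{L^{1+\varepsilon}(bD)}$ by $\int_{bD}\big(\int_{bD}|K(z,w)|^{1+\varepsilon}\,d\mu(z)\big)^{1/(1+\varepsilon)}|f(w)|\,d\mu(w)$. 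Since $\varepsilon<\frac{1}{2n-1}$, the inner integral is uniformly bounded in $w$ by Proposition \ref{better kernel}, so the whole expression is $\lesssim\|f\|_{L^1(bD)}$, which is the claim.

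For $p>1$ I would only partially absorb the singularity of the kernel. Split $|K(z,w)|=|K(z,w)|^{1/q}\,|K(z,w)|^{1/p}$ with $\tfrac1p+\tfrac1q=1$ and apply H\"older's inequality in $w$ to obtain $\int_{bD}|K(z,w)||f(w)|\,d\mu(w)\le\big(\int_{bD}|K(z,w)|\,d\mu(w)\big)^{1/q}\big(\int_{bD}|K(z,w)||f(w)|^p\,d\mu(w)\big)^{1/p}$. The first factor is uniformly bounded by \eqref{9} (the $\varepsilon=0$ case of Proposition \ref{better kernel}). Hence, raising to the power $p+\varepsilon$ and integrating in $z$, the problem reduces to showing $\int_{bD}\big(\int_{bD}|K(z,w)|\,g(w)\,d\mu(w)\big)^{1+\varepsilon/p}\,d\mu(z)\lesssim\|g\|_{L^1(bD)}^{\,1+\varepsilon/p}$, where $g=|f|^p\in L^1(bD)$.

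This last inequality is once more Minkowski's integral inequality, now with exponent $r=1+\varepsilon/p$, applied to the nonnegative kernel $|K(z,w)|$ against the measures $g(w)\,d\mu(w)$ and $d\mu(z)$: the left side is at most $\big(\int_{bD}\big(\int_{bD}|K(z,w)|^{r}\,d\mu(z)\big)^{1/r}g(w)\,d\mu(w)\big)^{r}$, and since $p\ge1$ forces $\varepsilon/p\le\varepsilon<\frac{1}{2n-1}$, Proposition \ref{better kernel} applies to $|K|^{r}$ and the inner integral is again uniformly bounded in $w$. Combining the three steps gives $\|\mathcal{K}f\|_{L^{p+\varepsilon}(bD)}\lesssim\|f\|_{L^p(bD)}$.

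The only delicate point — and something to watch rather than a genuine obstacle — is the bookkeeping of exponents: every power of $|K|$ appearing inside an application of Minkowski's inequality must stay strictly below $1+\frac{1}{2n-1}$ so that Proposition \ref{better kernel} is available. Splitting off a full factor $|K|^{1/q}$ and using only the remaining $|K|^{1/p}$ to pair with $|f|^p$ is precisely what keeps the residual exponent equal to $1+\varepsilon/p$, and the hypothesis $p\ge1$ is exactly what keeps $\varepsilon/p$ in the admissible range $[0,\frac{1}{2n-1})$. Everything else is a routine combination of H\"older's and Minkowski's inequalities with Proposition \ref{better kernel}.
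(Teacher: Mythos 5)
Your proof is correct and follows essentially the same approach as the paper: Minkowski's integral inequality together with Proposition~\ref{better kernel} handles $p=1$, and for $p>1$ both arguments use the same H\"older split $|K|=|K|^{1/q}|K|^{1/p}$, bound the factor $\int_{bD}|K(z,w)|\,d\mu(w)$ uniformly, and then apply Minkowski with exponent $1+\varepsilon/p$ so that Proposition~\ref{better kernel} controls the remaining inner integral.
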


\noindent Thus, we obtain the following important corollary:

\begin{corollary}\label{improving operators}
 The operators $\mathcal{R}$, $\mathcal{R}^*$, and $(\mathcal{C}^\sharp)^*-\mathcal{C}^\sharp$   map $L^p(bD)$ to $L^{p+\varepsilon}(bD)$ for $p \geq 1$ and $\varepsilon \in [0, \frac{1}{2n-1}).$
\end{corollary}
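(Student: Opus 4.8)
The plan is to deduce the corollary directly from Proposition \ref{Lp improvement} by checking that each of the three operators $\mathcal{R}$, $\mathcal{R}^*$, and $(\mathcal{C}^\sharp)^*-\mathcal{C}^\sharp$ is an integral operator whose kernel satisfies the size estimate $|K(z,w)|\lesssim d(w,z)^{-2n+1}$; once this is verified, the conclusion is immediate from that proposition applied with the given range of $\varepsilon$. So the entire content of the proof is the identification of the kernels and the invocation of the already-established size bounds.

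First I would handle $\mathcal{R}$: its kernel $R(w,z)$ satisfies $|R(w,z)|\lesssim d(w,z)^{-2n+1}$ by \eqref{7}, and since $d(w,z)=d(z,w)$ (we have arranged symmetry of the quasi-metric), this is exactly the hypothesis of Proposition \ref{Lp improvement}, giving the mapping $L^p(bD)\to L^{p+\varepsilon}(bD)$. For $\mathcal{R}^*$, the kernel is $\overline{R(z,w)}$ (adjoint with respect to Lebesgue measure $d\mu$), whose modulus is $|R(z,w)|\lesssim d(z,w)^{-2n+1}=d(w,z)^{-2n+1}$, so Proposition \ref{Lp improvement} applies verbatim. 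For $(\mathcal{C}^\sharp)^*-\mathcal{C}^\sharp$, Lemma \ref{kernel difference} gives precisely that its kernel $K(z,w)$ (with respect to $d\mu$) satisfies $|K(z,w)|\lesssim d(w,z)^{-2n+1}$, so once again Proposition \ref{Lp improvement} yields the claim. In each case one should note that $\varepsilon\in[0,\tfrac{1}{2n-1})$ is exactly the range for which Proposition \ref{better kernel}, hence Proposition \ref{Lp improvement}, is valid, so the statement of the corollary is consistent.

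The only point requiring a word of care—and the closest thing to an obstacle, though it is minor—is the bookkeeping about which measure the adjoint is taken with respect to. Since $\mathcal{C}^*$ and $\mathcal{R}^*$ are adjoints with respect to \emph{unweighted} Lebesgue measure $d\mu$, the kernel of $\mathcal{R}^*$ is literally $\overline{R(z,w)}$ and no Radon--Nikodym factor $\Lambda$ intervenes; the factor $\Lambda$ only enters when passing between Leray--Levi and Lebesgue adjoints, which is already accounted for inside the proof of Lemma \ref{kernel difference} for the term $(\mathcal{C}^\sharp)^*-\mathcal{C}^\sharp$. Thus no new estimate is needed here. I would write the proof in two or three lines, simply citing \eqref{7}, Lemma \ref{kernel difference}, and Proposition \ref{Lp improvement}, with the remark about the symmetry $d(w,z)=d(z,w)$ and the unweighted adjoint handling the kernel of $\mathcal{R}^*$.
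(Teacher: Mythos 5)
Your proof is correct and is exactly the paper's (implicit) argument: the corollary is stated immediately after Proposition \ref{Lp improvement} as a direct consequence, once one observes from \eqref{7} and Lemma \ref{kernel difference} that all three operators have kernels bounded by a multiple of $d(w,z)^{-2n+1}$. Your extra remark distinguishing the unweighted-Lebesgue adjoint (so the kernel of $\mathcal{R}^*$ is simply $\overline{R(z,w)}$ with no $\Lambda$ factor) is a careful and accurate clarification of the bookkeeping, consistent with the paper's conventions.
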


We now turn to a proof of the major lemma concerning the error terms. This lemma adapts an argument that can be found in \cite{R} to the weighted setting. It also should be noted that components of this proof are analogous to a ``weighted Schur test," which appears to be well known, see, for example, \cite{Tao}.

\begin{lemma}\label{compactness} Let $K(z,w)$ be a measurable function on $bD \times bD$ satisfying the following for all $z,w \in bD$ and all weights $\sigma \in A_1$:
\begin{enumerate}[label=(\roman*)]
\item $ \int_{B(z,\delta)}|K(z,w)| \sigma(w) \mathop{d \mu(w)} \lesssim C(\delta) \sigma(z);$

\item $ \int_{B(w,\delta)}|K(z,w)| \sigma(z) \mathop{d \mu(z)} \lesssim C(\delta) \sigma(w);$

\item For any fixed $\delta>0$, the kernel $K(z,w)$ is bounded on $$bD \times bD \setminus \{(z,w): d(z,w)<\delta\}$$ (with a bound that depends on $\delta$).
\end{enumerate}

\noindent Furthermore, $C(\delta)$ tends to $0$ as $\delta \rightarrow 0$. Then the operator $\mathcal{K}$ defined by $\mathcal{K}(f)(z)=\int_{bD} K(z,w) f(w) \mathop{d \mu(w)}$ is compact on $L^p_\sigma(bD)$ for $\sigma \in A_p$.
\begin{proof}

First, consider the case when $K$ is bounded on $bD \times bD$, say $||K||_{L^\infty(bD)} \leq M$. Let $\sigma \in A_p$. Then note that the kernel of the operator $\mathcal{K}$ with respect to the weighted measure $\mathop{d \sigma}= \sigma \mathop{d \mu}$ is $\tilde{K}(z,w)=K(z,w)\sigma^{-1}(w)$. To prove compactness on $L^p_\sigma(bD)$, it suffices to show the following double-norm is finite (it is well-known the finiteness of this double-norm implies compactness, for example see \cite{Eveson}):

$$\int_{bD} \left(\int_{bD} |\tilde{K}(z,w)|^q \mathop{d \sigma(w)}\right)^{p/q} \mathop{d \sigma(z)}, $$ where $q$ denotes the H\"{o}lder exponent conjugate to $p$.
 Then we have

\begin{eqnarray*} 
\int_{bD} \left(\int_{bD} |\tilde{K}(z,w)|^q \mathop{d \sigma(w)}\right)^{p/q} \mathop{d \sigma(z)} & = & \int_{bD} \left(\int_{bD} |K(z,w)|^q \sigma^{-\frac{1}{p-1}} \mathop{d \mu(w)}\right)^{p/q} \sigma(z) \mathop{d \mu(z)} \\
& \leq & M^p ||\sigma||_{L^1(bD)} ||\sigma^{\frac{-1}{p-1}}||_{L^1(bD)}^{p-1}\\
& < & \infty
\end{eqnarray*} since 
$\sigma,\sigma^{-\frac{1}{p-1}}$ are integrable on $bD$. Thus the theorem holds in this case.

To pass to the case where $K$ is unbounded, let $\delta_j=\frac{1}{j}$ and
$$K_j(z,w)=
\begin{cases}
K(z,w) & d(w,z) \geq \delta_j \\
0 & d(w,z)<\delta_j
\end{cases}.$$

Let $\mathcal{K}_j$ be the integral operator with kernel $K_j$. Then, by hypothesis $K_j$ is bounded on $bD \times bD$ and by the argument above, $\mathcal{K}_j$ is compact on $L^p_\sigma(bD)$. Since the compact operators are a closed subspace of the Banach space of bounded linear operators on $L^p_\sigma(bD)$, if we can show that the operators $\mathcal{K}_j$ approach $\mathcal{K}$ in operator norm, we will be done. 

\vspace{0.25 cm}

To this end, let $f \in L^p_\sigma(bD)$ with $||f||_{L^p_\sigma(bD)}\leq 1.$ Note that as $\sigma \in A_p$, we can write $$\sigma=\frac{\sigma_1}{\sigma_2^{p-1}}$$ where $\sigma_1,\sigma_2 \in A_1$ by the factorization of $A_p$ weights in the setting of spaces of homogeneous type (see, for example, \cite{Ru} for a proof of this well-known fact). By H\"{o}lder's Inequality applied to the functions $|K(z,w)-K_j(z,w)|^{1/q}\sigma_2(w)^{1/q}$ and $|K(z,w)-K_j(z,w)|^{1/p}\sigma_2(w)^{-1/q}|f(w)|$ and then applying Proposition \ref{A_1 control}, we obtain the estimate:
\begin{small}
\begin{eqnarray*}
|(\mathcal{K}-\mathcal{K}_j)(f)(z)|& \leq & \int_{bD}|K(z,w)-K_j(z,w)||f(w)| \mathop{d \mu(w)}\\
& =& \left(\int_{B(z, \delta_j)} |K(z,w)| \sigma_2(w) \mathop{d \mu(w)}\right)^{1/q} \left(\int_{B(z,\delta_j)} |K(z,w)| (\sigma_2(w))^{1-p}|f(w)|^p \mathop{d \mu(w)} \right)^{1/p}\\
& \lesssim & C(\delta_j)^{1/q} \sigma_2(z)^{1/q} \left(\int_{B(z,\delta_j)} |K(z,w)| (\sigma_2(w))^{1-p}|f(w)|^p \mathop{d \mu(w)} \right)^{1/p}.\\
\end{eqnarray*}
\end{small}

Thus, we obtain, applying the proceeding estimate, Fubini, and Proposition \ref{A_1 control} again: 
\begin{eqnarray*}
||(\mathcal{K}-\mathcal{K}_j)f||^p_{L^p_\sigma(bD)} & \leq &\int_{bD} C(\delta_j)^{\frac{p}{q}} \sigma_2(z)^{\frac{p}{q}} \left(\int_{B(z,\delta_j)} |K(z,w)| (\sigma_2(w))^{1-p} |f(w)|^p \mathop{d \mu(w)} \right)\frac{\sigma_1(z)}{\sigma_2(z)^{p-1}} \mathop{d \mu(z)}\\
& = & C(\delta_j)^{\frac{p}{q}} \int_{bD} \int_{B(z,\delta_j)} |K(z,w)| (\sigma_2(w))^{1-p} |f(w)|^p \mathop{d \mu(w)} \sigma_1(z)\mathop{d \mu(z)}\\
& = & C(\delta_j)^{\frac{p}{q}} \int_{bD} \left(\int_{B(w,\delta_j)} |K(z,w)| \sigma_1(z)\mathop{d \mu(z)}\right) |f(w)|^p  (\sigma_2(w))^{1-p}\mathop{d \mu(w)}\\
& \lesssim & C(\delta_j)^p \int_{bD} \sigma_1(w) |f(w)|^p  (\sigma_2(w))^{1-p}\mathop{d \mu(w)}\\
& = & C(\delta_j)^p ||f||^p_{L^p_\sigma(bD)}\\
& \leq & C(\delta_j)^p.
\end{eqnarray*}

Letting $j \rightarrow \infty$, we have $\delta_j \rightarrow 0$ and $C(\delta_j) \rightarrow 0$. Thus, it immediately follows that the operators $\mathcal{K}_j$ approach $\mathcal{K}$ in operator norm and hence $\mathcal{K}$ is compact.

\end{proof}
\end{lemma}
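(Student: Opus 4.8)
The plan is to split the proof into two pieces: an easy case where the kernel is bounded on all of $bD\times bD$, and a truncation argument that reduces the general case to it. For the bounded case I would pass to the weighted measure space $(bD, d\sigma)$ with $d\sigma = \sigma\,d\mu$; there $\mathcal{K}$ has kernel $\widetilde K(z,w) = K(z,w)\sigma^{-1}(w)$, and it suffices to verify the generalized Hilbert--Schmidt criterion
$$\int_{bD}\Big(\int_{bD}|\widetilde K(z,w)|^q\,d\sigma(w)\Big)^{p/q}\,d\sigma(z) < \infty,$$
$q$ being the conjugate exponent, since finiteness of this quantity is a classical sufficient condition for compactness of an integral operator on $L^p$ (see \cite{Eveson}). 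If $\|K\|_{L^\infty}\le M$, this integral is at most $M^p\,\|\sigma\|_{L^1(bD)}\,\|\sigma^{-1/(p-1)}\|_{L^1(bD)}^{p-1}$, which is finite because $\sigma\in A_p$ forces both $\sigma$ and $\sigma^{-1/(p-1)}$ to lie in $L^1(bD)$ (recall $bD$ has finite measure).

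Next I would truncate near the diagonal. Setting $\delta_j = 1/j$ and $K_j = K\cdot\mathbf{1}_{\{d(w,z)\ge \delta_j\}}$, hypothesis (iii) makes each $K_j$ bounded, so each $\mathcal{K}_j$ is compact by the previous step. Since the compact operators form a closed subspace of the bounded operators on $L^p_\sigma(bD)$, it remains to show $\|\mathcal{K} - \mathcal{K}_j\|\to 0$, i.e. a quantitative bound $\|(\mathcal{K}-\mathcal{K}_j)f\|_{L^p_\sigma}\lesssim C(\delta_j)\|f\|_{L^p_\sigma}$. This is where the two ``weighted Schur'' hypotheses (i) and (ii) enter, together with the factorization $\sigma = \sigma_1\sigma_2^{-(p-1)}$ with $\sigma_1,\sigma_2\in A_1$, which is available since $bD$ is a space of homogeneous type (see \cite{Ru}). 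I would apply H\"older's inequality (exponents $q,p$) inside the integral defining $(\mathcal{K}-\mathcal{K}_j)f(z)$ after writing the pointwise factorization $|K-K_j|\,|f| = \big(|K-K_j|\sigma_2\big)^{1/q}\big(|K-K_j|\sigma_2^{1-p}|f|^p\big)^{1/p}$, bound the first resulting factor by $C(\delta_j)^{1/q}\sigma_2(z)^{1/q}$ using (i) with the weight $\sigma_2$, raise to the $p$-th power, integrate against $\sigma\,d\mu$ (where the stray powers of $\sigma_2$ cancel), apply Fubini, and finally invoke (ii) with the weight $\sigma_1$ to replace the inner factor by $C(\delta_j)\sigma_1(w)$. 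The exponents recombine as $C(\delta_j)^{p/q + 1} = C(\delta_j)^p$, which gives the claimed estimate; letting $j\to\infty$ and using $C(\delta_j)\to 0$ completes the argument.

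The point requiring care is the interplay between the weighted space $L^p_\sigma(bD)$ and the fact that the adjoint in the statement is the \emph{unweighted} one: one must keep track of which $A_1$ factor ($\sigma_1$ or $\sigma_2$) gets fed into (i) versus (ii), and choose the H\"older split so that every extraneous power of $\sigma_2$ cancels against $\sigma = \sigma_1\sigma_2^{-(p-1)}$ and so that Fubini applies legitimately to the nonnegative integrand. Conceptually this is just a ``weighted Schur test,'' and the only input beyond the $A_p$-factorization is that (i)--(iii) hold for \emph{every} $A_1$ weight, not merely for $\sigma$ itself — which for the kernels of interest (those of $\mathcal{R}$, $\mathcal{R}^*$, and $(\mathcal{C}^\sharp)^*-\mathcal{C}^\sharp$) is exactly the content of Proposition \ref{A_1 control} combined with their boundedness off the diagonal. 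Accordingly, I expect the bookkeeping of exponents in the H\"older estimate, rather than any genuinely new idea, to be the main obstacle.
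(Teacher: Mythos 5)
Your proposal is correct and follows essentially the same route as the paper's proof: the bounded case via the generalized Hilbert--Schmidt (double-norm) criterion, the diagonal truncation $K_j$, the $A_p$ factorization $\sigma = \sigma_1\sigma_2^{-(p-1)}$, and the H\"older split feeding $\sigma_2$ into hypothesis (i) and $\sigma_1$ into hypothesis (ii) after Fubini. The exponent bookkeeping $C(\delta_j)^{p/q+1} = C(\delta_j)^p$ is correct (since $p/q = p-1$), so nothing is missing.
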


The preceding lemma admits the following, very useful corollary:
 \begin{corollary}\label{compact operators}
 The operators $\mathcal{R}$, $\mathcal{R}^*$, and $(\mathcal{C}^\sharp)^*-\mathcal{C}^\sharp$ are compact on $L^p_\sigma(bD)$ for $\sigma \in A_p$. 
 \end{corollary}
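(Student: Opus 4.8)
The plan is to deduce Corollary \ref{compact operators} directly from Lemma \ref{compactness} by verifying that the kernels of the three operators $\mathcal{R}$, $\mathcal{R}^*$, and $(\mathcal{C}^\sharp)^*-\mathcal{C}^\sharp$ all satisfy hypotheses (i)--(iii) of that lemma with a gain function $C(\delta)$ that vanishes as $\delta\to 0$. The essential observation is that all three of these operators have kernels obeying the pointwise size bound $|K(z,w)|\lesssim d(w,z)^{-2n+1}$: for $(\mathcal{C}^\sharp)^*-\mathcal{C}^\sharp$ this is precisely the content of Lemma \ref{kernel difference}; for $\mathcal{R}$ it is the estimate \eqref{7}; and for $\mathcal{R}^*$ it follows because the kernel of $\mathcal{R}^*$ is $\overline{R(w,z)}$, and the bound \eqref{7} is symmetric in the roles of $z$ and $w$ up to the quasi-symmetry $d(w,z)\approx d(z,w)$ (which we have normalized to equality). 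So in every case the relevant kernel satisfies the size hypothesis of Proposition \ref{A_1 control}.

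Next I would check each hypothesis of Lemma \ref{compactness} in turn. Hypothesis (iii) is immediate: if $|K(z,w)|\lesssim d(w,z)^{-2n+1}$ and $d(z,w)\geq\delta$, then $|K(z,w)|\lesssim \delta^{-2n+1}$, which is a finite bound depending only on $\delta$ (and $-2n+1<0$, so this is genuinely a bound). For hypotheses (i) and (ii), I invoke Proposition \ref{A_1 control} verbatim: for any $\sigma\in A_1$ it gives
$$\int_{B(z,\delta)}|K(z,w)|\,\sigma(w)\,d\mu(w)\lesssim \delta\,\sigma(z),\qquad \int_{B(w,\delta)}|K(z,w)|\,\sigma(z)\,d\mu(z)\lesssim \delta\,\sigma(w),$$
so we may take $C(\delta)=c\,\delta$ for a fixed constant $c$ independent of $\sigma$, $z$, $w$. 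This $C(\delta)$ clearly tends to $0$ as $\delta\to 0$, which is the final requirement in Lemma \ref{compactness}. Applying the lemma then yields that $\mathcal{R}$, $\mathcal{R}^*$, and $(\mathcal{C}^\sharp)^*-\mathcal{C}^\sharp$ are each compact on $L^p_\sigma(bD)$ for every $\sigma\in A_p$ and every $1<p<\infty$, which is exactly the assertion of the corollary.

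There is really no serious obstacle here, since all the analytic work has been front-loaded into Lemma \ref{kernel difference}, Proposition \ref{A_1 control}, and Lemma \ref{compactness}; the corollary is a matter of bookkeeping. The only point requiring a word of care is the claim for $\mathcal{R}^*$: one must note that $\mathcal{R}^*$ is the adjoint with respect to \emph{unweighted} Lebesgue measure, so its kernel is simply $\overline{R(w,z)}$, and the symmetry $d(w,z)\approx d(z,w)$ (taken as equality by our normalizing convention) transfers the estimate \eqref{7} to this kernel without change. One might also remark that the same argument applies equally to $(\mathcal{C}^\sharp)^*-\mathcal{C}^\sharp$ taken as its own ``adjoint-symmetric'' object: since $((\mathcal{C}^\sharp)^*-\mathcal{C}^\sharp)^*=\mathcal{C}^\sharp-(\mathcal{C}^\sharp)^*=-((\mathcal{C}^\sharp)^*-\mathcal{C}^\sharp)$, compactness of the operator is equivalent to compactness of its adjoint, so no separate verification is needed there. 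Thus the corollary follows immediately.
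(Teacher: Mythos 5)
Your proposal is correct and follows exactly the route the paper intends: the paper states the corollary as a direct consequence of Lemma \ref{compactness}, and the verification you carry out (kernel size bound via \eqref{7} and Lemma \ref{kernel difference}, symmetry transfer to $\mathcal{R}^*$, Proposition \ref{A_1 control} for hypotheses (i)--(ii) with $C(\delta)\approx\delta$, and the trivial check of (iii)) is precisely the intended bookkeeping. The closing observation that $((\mathcal{C}^\sharp)^*-\mathcal{C}^\sharp)^*=-((\mathcal{C}^\sharp)^*-\mathcal{C}^\sharp)$ is a nice redundancy check but not needed.
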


We need one more crucial lemma to conclude our analysis of the error terms and allow us to present the proof of Theorem \ref{szego main detailed} in the next section.

\begin{lemma}\label{spectrum}
Let $\mathcal{K}$ be an integral operator on $L^p(bD)$ with a kernel $K(z,w)$ that satisfies the size estimate $|K(z,w)| \lesssim d(w,z)^{-2n+1}.$ Further suppose that $\ii\mathcal{K}$ is self-adjoint on $L^2(bD)$.  Then $1$ is not in the spectrum of $\mathcal{K}$ considered as an operator on $L^p_\sigma(bD)$, where $\sigma$ is an $A_p$ weight. 

\begin{proof}
First, note that $1$ is not an eigenvalue of $\mathcal{K}$ considered as an operator on (unweighted) $L^2(bD)$. So suppose to the contrary that there exists an eigenfunction $f \in L^p_\sigma(bD)$ such that $\mathcal{K}f=f$. We assert $f \in L^1(bD)$. To see this, note by H\"{o}lder
\begin{eqnarray*}
\int_{bD}|f(w)| \mathop{d \mu(w)}& = & \int_{bD}|f(w)| \sigma(w)^{1/p}\sigma(w)^{-1/p}\mathop{d \mu(w)}\\
& \leq & ||f||_{L^p_{\sigma}(bD)}||\sigma^{-\frac{1}{p-1}}||^{1/q}_{L^{1}(bD)}\\
& < & \infty.
\end{eqnarray*}

Then, by Corollary \ref{improving operators}, $f \in L^{1+\varepsilon}(bD)$. In particular, we have
\begin{eqnarray*}
||f||_{L^{1+\varepsilon}(bD)} & = & ||\mathcal{K}f||_{L^{1+\varepsilon}(bD)}\\
& \lesssim & ||f||_{L^1(bD)}\\
& < & \infty.
\end{eqnarray*}

 But since $\mathcal{K}f=f$, we can repeat this argument to obtain $f \in L^{1+2\varepsilon}$. In fact, we can iterate this argument arbitrarily many times to obtain that $f \in L^p(bD)$ for all $p \geq 1$! In particular, $f \in L^2(bD)$. This contradicts the fact that $1$ is not an eigenvalue of $\mathcal{K}$ on $L^2(bD).$ Since $\mathcal{K}$ is compact on $L^p_{\sigma}(bD)$ by Corollary \ref{compact operators} (or rather the arguments leading to this corollary), this implies $1$ is not in the spectrum of $\mathcal{K}$ on $L^p_\sigma(bD)$, as required.

\end{proof}

\end{lemma}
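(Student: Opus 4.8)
The plan is to prove Lemma \ref{spectrum} by a bootstrapping (self-improvement) argument that forces any $L^p_\sigma$ eigenfunction with eigenvalue $1$ to actually live in $L^2(bD)$, where the self-adjointness of $\ii\mathcal{K}$ makes the real number $1$ impossible as an eigenvalue. First I would record the base case: since $\ii\mathcal{K}$ is self-adjoint on $L^2(bD)$, its spectrum is real, so $\ii$ times a real eigenvalue cannot occur unless that eigenvalue is $0$; concretely, $\mathcal{K}f = f$ with $f \in L^2(bD)$ would give $(\ii\mathcal{K})f = \ii f$, contradicting that $\ii$ is not real, hence $1 \notin \sigma_{\mathrm{pt}}(\mathcal{K}|_{L^2})$. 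Then suppose for contradiction that $f \in L^p_\sigma(bD)$ is nonzero with $\mathcal{K}f = f$.

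The core of the argument is the iteration. First I would use H\"older's inequality together with the integrability of $\sigma^{-1/(p-1)}$ (which holds since $\sigma \in A_p$) to conclude $f \in L^1(bD)$ — exactly the computation sketched in the excerpt. Next, since $\mathcal{K}$ has a kernel bounded by $d(w,z)^{-2n+1}$, Proposition \ref{Lp improvement} (or Corollary \ref{improving operators}, of which $\mathcal{K}$ is a special case of the same type) tells us $\mathcal{K}$ maps $L^1(bD)$ boundedly into $L^{1+\varepsilon}(bD)$ for any fixed $\varepsilon \in [0, \tfrac{1}{2n-1})$. Therefore $f = \mathcal{K}f \in L^{1+\varepsilon}(bD)$. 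Now the key point is that $\mathcal{K}$ also maps $L^{1+\varepsilon}$ into $L^{1+2\varepsilon}$ (again by Proposition \ref{Lp improvement}, applied with base exponent $1+\varepsilon \geq 1$ and the same $\varepsilon$), so $f \in L^{1+2\varepsilon}$; iterating $N$ times with $N\varepsilon \geq 1$ yields $f \in L^{q}(bD)$ for every finite $q \geq 1$, in particular $f \in L^2(bD)$. This contradicts the base case, so no such $f$ exists and $1$ is not an eigenvalue of $\mathcal{K}$ on $L^p_\sigma(bD)$.

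Finally I would upgrade ``not an eigenvalue'' to ``not in the spectrum.'' Since $\mathcal{K}$ satisfies the size estimate $|K(z,w)| \lesssim d(w,z)^{-2n+1}$, by the arguments of Proposition \ref{A_1 control}, Lemma \ref{compactness}, and Corollary \ref{compact operators}, the operator $\mathcal{K}$ is compact on $L^p_\sigma(bD)$ for $\sigma \in A_p$. (One should check that the hypotheses of Lemma \ref{compactness} are met: (i) and (ii) follow from Proposition \ref{A_1 control} with $C(\delta) = c\,\delta \to 0$, and (iii) is the statement that $d(w,z)^{-2n+1}$ is bounded away from the diagonal, which is immediate.) For a compact operator on a Banach space, the spectrum consists of $0$ together with eigenvalues of finite multiplicity; hence any nonzero point of the spectrum is an eigenvalue. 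Since $1 \neq 0$ and $1$ is not an eigenvalue, $1 \notin \mathrm{spec}(\mathcal{K}|_{L^p_\sigma(bD)})$, completing the proof.

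I expect the main obstacle to be a purely bookkeeping one: making sure the $L^p$-improvement of Proposition \ref{Lp improvement} can genuinely be iterated with a \emph{fixed} step size $\varepsilon$ independent of the exponent at each stage — this is why it is stated for all $p \geq 1$ with the same admissible range of $\varepsilon$, so the iteration terminates after finitely many steps rather than converging to some finite threshold. A secondary subtlety is the verification that $\mathcal{K}$ (not just $\mathcal{R}$, $\mathcal{R}^*$, and $(\mathcal{C}^\sharp)^* - \mathcal{C}^\sharp$) falls under the scope of Corollary \ref{compact operators}; but since that corollary is really a consequence of Lemma \ref{compactness}, which applies to \emph{any} kernel obeying the size bound $d(w,z)^{-2n+1}$, this is automatic. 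Everything else is a direct application of results already established in this section.
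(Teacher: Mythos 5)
Your proposal is correct and follows essentially the same route as the paper's proof: bootstrap an $L^p_\sigma$ eigenfunction into $L^2$ via Proposition \ref{Lp improvement}, use the skew-adjointness of $\mathcal{K}$ (i.e.\ self-adjointness of $\ii\mathcal{K}$) to rule out $1$ as an $L^2$ eigenvalue, then invoke compactness to upgrade from ``not an eigenvalue'' to ``not in the spectrum.'' You spell out two points the paper leaves implicit---why $1$ cannot be an $L^2$ eigenvalue, and why the arguments of Lemma \ref{compactness} apply directly to $\mathcal{K}$ rather than only the three named operators in Corollary \ref{compact operators}---but the structure is the same.
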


\subsection{Proof of Theorem \ref{szego main detailed}}

Equipped with these definitions and results, we are in a position to prove Theorem \ref{szego main detailed}. As discussed, the essential ideas in this proof have been around for a long time and can be found, for example, in \cite{KS1,KS2}.

\begin{proof}[Proof of Theorem~\ref{szego main detailed}] 
First, note that both $\mathcal{S}$ and $\mathcal{C}$ essentially produce and reproduce boundary values of holomorphic functions: they are projections onto $H^2(bD)$ (this is proven precisely in \cite{LS3}). Consequently, we obtain the following two operator identities on $L^2(bD)$: $\mathcal{S}\mathcal{C}=\mathcal{C}$ and $\mathcal{C}\mathcal{S}=\mathcal{S}$. Taking adjoints of the second identity and using the fact that the Szeg\H{o} projection is self-adjoint, we get $\mathcal{S}\mathcal{C}^*=\mathcal{S}$, and further manipulation yields $\mathcal{S}(\mathcal{C}^*-\mathcal{C})=\mathcal{S}-\mathcal{C},$ or $\mathcal{S}(I-\mathcal{A})=\mathcal{C}$ where $I$ denotes the identity operator and $\mathcal{A}=\mathcal{C}^*-\mathcal{C}$. By Theorem \ref{c sharp bound} and Corollary \ref{compact operators} and, we know that $\mathcal{C}=\mathcal{C}^\sharp+\mathcal{R}$ is bounded on $L^p_\sigma(bD)$ for $\sigma\in A_p$. 

\vspace{0.2 cm}

Next, we assert that the operator $\mathcal{A}$ is compact on $L^p_\sigma(bD)$. To see this, write $$\mathcal{A}=(\mathcal{C}^\sharp)^*-\mathcal{C}^\sharp+(\mathcal{C}^\sharp-\mathcal{C})+(\mathcal{C}^*-(\mathcal{C}^\sharp)^*)=((\mathcal{C}^\sharp)^*-\mathcal{C}^\sharp)-\mathcal{R}+\mathcal{R}^*$$ and appeal to Corollary \ref{compact operators}. Next, an easy computation shows that $\ii \mathcal{A}$ is self-adjoint on $L^2(bD)$. It follows from Lemma \ref{spectrum} that $1$ is not in the spectrum of $\mathcal{A}$ considered as an operator on $L^p_\sigma(bD)$ and hence the operator $(I-\mathcal{A})$ is invertible on $L^p_\sigma(bD)$. Thus, we may write $$\mathcal{S}=\mathcal{C}(I-\mathcal{A})^{-1}$$ and conclude that $\mathcal{S}$ extends to a bounded operator on  $L^p_\sigma(bD)$ since both $\mathcal{C}$ and $(I-\mathcal{A})^{-1}$ are bounded on $L^p_\sigma(bD)$. Thus, we have established all parts of Theorem \ref{szego main detailed}.
\end{proof}

\section{The Szeg\H{o} Projection on $C^2$ domains}\label{szego minimal} 
\subsection{Background for $C^2$ case}
We now consider what modifications are necessary to prove Theorem \ref{szego main2}, as in \cite{LS1}. From now on we assume $D$ has boundary of class $C^2$, but all the other assumptions about $D$ and $\rho$ from before remain in force.  We shall be brief, as basically the same setup applies with one crucial change. This involves uniformly approximating the second derivatives of $\rho$ by differentiable functions. In particular, since that boundary is of class $C^2$, we must replace the second derivatives $\frac{\partial^2 \rho}{\partial w_j \partial w_k}$ by an $n \times n$ matrix of $\{\tau_{j,k}^\varepsilon\}$ of $C^1$ functions satisfying

$$\sup_{w \in bD} \left| \frac{\partial \rho}{\partial w_j \partial w_k}(w)-\tau_{j,k}^\varepsilon(w) \right| \leq \varepsilon \quad 1\leq j,k \leq n.$$

Now we define the analogs of $g(w,z)$, $G(w,z)$ and $\eta(w,z)$ In particular, define

$$g_\varepsilon(w,z):= \chi\left( \sum_{j=1}^{n} \dfrac{\partial \rho}{\partial w_j}(w)(w_j-z_j) -\frac{1}{2}\sum_{j,k=1}^{n} \tau_{j,k}^{\varepsilon}(w) (w_j-z_j)(w_k-z_k)\right)+(1-\chi)|w-z|^2$$ where $\chi$ is the same $C^\infty$ cutoff function as in the $C^3$ case. If $\varepsilon$ is taken sufficiently small, we have the analogous estimate

$$\text{Re}(g_\varepsilon(w,z)) \gtrsim -\rho(z)+|w-z|^2,$$ where the implicit constant is independent of $\varepsilon.$

 In the same way, we define the $(1,0)$ form in $w$ $G_\varepsilon(w,z)$ as follows:

$$G_\varepsilon(w,z):= \chi\left( \sum_{j=1}^{n} \dfrac{\partial \rho}{\partial w_j}(w)dw_j -\frac{1}{2}\sum_{j,k=1}^{n} \tau_{j,k}^{\varepsilon }(w)(w_k-z_k)dw_j\right)+(1-\chi)\sum_{j=1}^n(\bar{w}_j-\bar{z}_j)dw_j.$$

As before, we define for $w \in bD, z\in D$:

$$\eta_\varepsilon(w,z):= \frac{G_\varepsilon(w,z)}{g_\varepsilon(w,z)}.$$

Then of course $\eta_\varepsilon$ is again a generating form. Therefore, we can construct the associated Cauchy-Fantappi\'{e} integral operator $\mathbf{C}_\varepsilon^1$ in exactly the same way as we constructed $\mathbf{C}_1$, with $\eta_\varepsilon$ playing the role of $\eta$. In particular, the analog of Proposition \ref{reproduce holo} holds for $\mathbf{C}_\varepsilon^1$. 

The issue, again, is that $\mathbf{C}_\varepsilon^1$ reproduces but does not produce holomorphic functions. Again, we can introduce a correction operator $\mathbf{C}_\varepsilon^2$ and consider the operator $\mathbf{C}=\mathbf{C}_\varepsilon^1+\mathbf{C}_\varepsilon^2.$ Proposition \ref{reproduce and produce} will hold in this case; the operator $\mathbf{C}_\varepsilon$ will reproduce and produce holomorphic functions. 

The rest of the setup follows basically identically. The definition of the Leray-Levi measure $\mathop{d \lambda}$ does not change, except now $\Lambda$ will merely be a continuous rather than Lipschitz map. The quasi-metric $d$ will be defined in the same way, namely

$$d(w,z)=|g_\varepsilon(w,z)|^{1/2}$$ and will satisfy the same properties, including $(bD,d,\mu)$ being a space of homogeneous type. 

We can again consider the operator 

$$\mathcal{C}_\varepsilon(f)(z)=\mathbf{C}_\varepsilon(f)(z)\rvert_{bD}$$ and this definition makes sense when the function $f$ is H\"{o}lder continuous with respect to $d$ as before. We also can obtain the decomposition

$$\mathbf{C}_\varepsilon=\mathbf{C}_\varepsilon^\sharp+\mathbf{R}_\varepsilon$$ where $$\mathbf{C}_\varepsilon^\sharp(f)(z)=\int_{bD} \frac{f(w)}{g_\varepsilon(w,z)^n} \mathop{d \lambda(w)}$$ and the kernel $R_\varepsilon(w,z)$ of the operator $\mathbf{R}_\varepsilon$ satisfies

$$|R_\varepsilon(w,z)| \leq c_{\varepsilon} d(w,z)^{-2n+1}.$$ Here $c_\varepsilon$ denotes a constant that can depend on $\varepsilon$.

Restricting this decomposition to the boundary, it is possible to obtain the following operator equation, acting on an appropriate class of functions:

$$\mathcal{C}_\varepsilon= \mathcal{C}_\varepsilon^\sharp+\mathcal{R}_\varepsilon.$$

The class of $A_p$ weights and the maximal function are defined in the exact same manner as before.

This concludes our reiteration of the preliminaries for the $C^2$ case. The reader is invited to consult \cite{LS1} for more details.

\subsection{Weighted estimates in the $C^2$ case}

We now demonstrate how weighted $L^p$ bounds can be obtained in the $C^2$ case. Throughout we closely follow the arguments in \cite{LS1}. First, note that we can still obtain the Kerzman-Stein equation in the same way as before. Thus, we have on $L^2(bD)$:

\begin{equation}\mathcal{S}(I-(\mathcal{C}_\varepsilon^*-\mathcal{C}_\varepsilon))=\mathcal{C}_\varepsilon. \label{10}\end{equation}

In this case, we will be unable to invert the operator $(I-(\mathcal{C}_\varepsilon^*-\mathcal{C}_\varepsilon))$. It suffices to prove that $\mathcal{S}$ is bounded on $L^2_\sigma(bD)$ for all $\sigma \in A_2$; then we can appeal to extrapolation. To begin with, we have:

\begin{lemma}\label{main c2 term}
For $\sigma \in A_2$ the operator $\mathcal{C}_\varepsilon$ extends to a bounded operator on $L^2_\sigma(bD)$ and in particular satisfies
 $$||\mathcal{C}_\varepsilon f||_{L^2_\sigma(bD)} \leq c_{\varepsilon,\sigma} ||f||_{L^2_\sigma(bD)},$$ where $c_{\varepsilon,\sigma}$ is a constant that depends on $\varepsilon$ and the weight $\sigma$.

 \begin{proof}
 First, the operator $\mathcal{C}_\varepsilon^\sharp$ is Calder\'{o}n-Zygmund (see the proof of \cite[Theorem 7]{LS1}); however, the constants in its smoothness estimates do depend on $\varepsilon$. The bound on the kernel of $\mathcal{R}_\varepsilon$ in fact implies that it is compact on $L^2_\sigma(bD)$ by the arguments in Lemma \ref{compactness}. This finishes the proof.
 \end{proof} 
 \end{lemma}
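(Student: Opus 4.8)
The plan is to prove Lemma \ref{main c2 term} by decomposing $\mathcal{C}_\varepsilon = \mathcal{C}_\varepsilon^\sharp + \mathcal{R}_\varepsilon$ and bounding each piece separately on $L^2_\sigma(bD)$, exactly mirroring the $C^3$ argument but keeping careful track of the fact that constants may now depend on $\varepsilon$. For the main term $\mathcal{C}_\varepsilon^\sharp$, I would invoke the kernel estimates analogous to Proposition \ref{kernel estimates}: the size bound $|g_\varepsilon(w,z)^{-n}| \lesssim d(w,z)^{-2n}$ and the Hölder-type smoothness estimates in both variables, all of which hold because $g_\varepsilon$ is built from $C^1$ data $\tau_{j,k}^\varepsilon$; this is precisely what is checked in the proof of \cite[Theorem 7]{LS1}. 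Combined with the $L^2(bD)$ boundedness (the $T(1)$ theorem applies), this shows $\mathcal{C}_\varepsilon^\sharp$ is a Calderón--Zygmund operator in the sense of spaces of homogeneous type, and therefore the weighted theory (Calderón--Zygmund operators are bounded on $L^2_\sigma$ for $\sigma$ in the $A_2$ class of the quasi-metric space, with bound depending on $[\sigma]_2$) gives boundedness on $L^2_\sigma(bD)$, with a constant that depends on $\varepsilon$ through the smoothness constants of the kernel and on $\sigma$ through $[\sigma]_2$. As before, one must pass between the Leray--Levi measure and Lebesgue measure, but since $\sigma \in A_2$ relative to Lebesgue measure iff it is relative to the (comparable) Leray--Levi measure, this is harmless.

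For the error term $\mathcal{R}_\varepsilon$, I would use the size estimate $|R_\varepsilon(w,z)| \le c_\varepsilon\, d(w,z)^{-2n+1}$ to apply Lemma \ref{compactness} directly: its kernel satisfies hypothesis (iii) trivially (being continuous off the diagonal, hence bounded away from it), and it satisfies hypotheses (i)--(ii) with $C(\delta) = c_\varepsilon \delta$ by Proposition \ref{A_1 control} applied to the kernel $c_\varepsilon^{-1} R_\varepsilon$ (the $A_1$-control proposition only needs the size bound $d(w,z)^{-2n+1}$, which holds up to the constant $c_\varepsilon$). Since $c_\varepsilon \delta \to 0$ as $\delta \to 0$, Lemma \ref{compactness} yields that $\mathcal{R}_\varepsilon$ is compact, a fortiori bounded, on $L^2_\sigma(bD)$ for every $\sigma \in A_2$. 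Adding the two bounds gives the claimed estimate with $c_{\varepsilon,\sigma}$ depending on both $\varepsilon$ and $\sigma$.

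The only genuine subtlety — and the place I would be most careful — is making sure that the kernel estimates of Proposition \ref{kernel estimates} really do survive in the $C^2$ setting with the regularized function $g_\varepsilon$ in place of $g$. The size estimate and the quasi-metric properties of $d(w,z) = |g_\varepsilon(w,z)|^{1/2}$ go through verbatim because they only use $\mathrm{Re}(g_\varepsilon) \gtrsim -\rho(z) + |w-z|^2$ with an $\varepsilon$-independent constant, together with the fact that $g_\varepsilon$ has $C^1$ coefficients. The smoothness (Hölder) estimates in $w$ and $z$ are where the $C^1$ regularity of $\tau_{j,k}^\varepsilon$ is used, and the resulting constants degenerate as $\varepsilon \to 0$ because the Lipschitz norms of $\tau_{j,k}^\varepsilon$ blow up — this is exactly why $c_{\varepsilon,\sigma}$ cannot be taken uniform in $\varepsilon$, and it is the reason the $C^2$ proof must later ``partially invert'' the Kerzman--Stein operator rather than simply run a Neumann series. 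But for this lemma, which only asserts boundedness for each fixed $\varepsilon$, the $\varepsilon$-dependence is acceptable, so no further work is needed beyond citing the kernel estimates from \cite{LS1} and the weighted Calderón--Zygmund and compactness machinery already developed above.
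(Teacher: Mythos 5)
Your proposal is correct and follows essentially the same route as the paper: decompose $\mathcal{C}_\varepsilon = \mathcal{C}_\varepsilon^\sharp + \mathcal{R}_\varepsilon$, apply weighted Calder\'on--Zygmund theory (via the kernel estimates in the proof of \cite[Theorem 7]{LS1}) to the main term, and use the size estimate on $R_\varepsilon$ together with Lemma \ref{compactness} to get compactness (hence boundedness) of the error term. Your discussion of where the $\varepsilon$-dependence enters and the comparison of measures is a useful elaboration of what the paper leaves implicit, but the argument is the same.
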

 
 The dependence of the constant on $\varepsilon$ turns out not to be an issue because ultimately in the course of the proof we will fix $\varepsilon$ sufficiently small and do not need to take a limit as $\varepsilon \rightarrow 0$. 
 
Next, we need to break up the operator $\mathcal{C}_\varepsilon^*-\mathcal{C}_\varepsilon$. Roughly, we break the kernel of $\mathcal{C}_\varepsilon$ into pieces supported on and off the diagonal $w=z$. Let $s=s(\varepsilon)$ be a parameter chosen depending on $\varepsilon$.  We write

$$\mathcal{C}_\varepsilon= \mathcal{C}_\varepsilon^s+\mathcal{R}_\varepsilon^s$$where $$\mathcal{C}_\varepsilon^s(f)= \mathcal{C}_\varepsilon(f \chi_s)
$$ and $\chi_s(w,z)$ is a symmetrized smooth cutoff function that is 1 when $d(z,w)\leq cs$ and 0 when $d(z,w)\geq s$ (see \cite{LS1} for details). Thus,
$$\mathcal{C}_\varepsilon^*-\mathcal{C}_\varepsilon=[(\mathcal{C}_\varepsilon^s)^*-\mathcal{C}_\varepsilon^s]+[(\mathcal{R}_\varepsilon^s)^*-\mathcal{R}_\varepsilon^s]:=\mathcal{A}_\varepsilon+\mathcal{D}_\varepsilon.$$

It is immediate from previous discussions that for fixed $\varepsilon$, $s$, the kernel of $\mathcal{R}_\varepsilon^s$ is bounded. It is then an entirely straightforward exercise using H\"{o}lder's inequality and the integrability of $\sigma$ that $\mathcal{R}_\varepsilon^s$ boundedly maps $L^2_\sigma(bD)$ to $L^\infty(bD)$.

We now need to deal with the other term. First, we state a lemma (\cite[Lemma 24]{LS1}) that we will later need. It is a decomposition lemma that partitions $\mathbb{C}^n=\mathbb{R}^{2n}$ into cubes at various levels. In particular, let $Q^1_0$ denote the unit cube centered at the origin in $\mathbb{C}^n$, and for $k \in \mathbb{Z}^n$ let $Q^1_k=k+Q^1_0$ be its integer translates. For $\gamma>0$, let $Q^\gamma_k=\gamma Q^1_k$. Note that for a given cube $Q^\gamma_k$, there are at most $N=3^{2n}$ cubes that touch it; i.e whose closures have non-empty intersection.

\begin{lemma}\label{cube decomp} Fix $\gamma>0$. Suppose $T$ is a bounded operator on $L^2_\sigma(bD)$ that satisfies:
\begin{enumerate}
\item $\mathbbm{1}_j T \mathbbm{1}_k=0$ if the cubes $Q_j^\gamma$ and $Q_k^\gamma$ do not touch.
\item $ ||\mathbbm{1}_j T \mathbbm{1}_k||_{L^2_\sigma} \leq A$ otherwise.
\end{enumerate}

Then $T$ satisfies $$ || T||_{L^2_\sigma} \leq AN.$$

\begin{proof}
The proof is identical to the one given in \cite{LS1}. The underlying measure is now $\sigma \mathop{d \mu}$ as opposed to just Lebesgue measure, but the argument is the same.
\end{proof}
\end{lemma}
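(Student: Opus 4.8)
The plan is a purely combinatorial block-orthogonality argument, exploiting that the cubes $\{Q_k^\gamma\}$ tile $\mathbb{C}^n$ up to a set of measure zero. Write $j \sim k$ to mean that the closures of $Q_j^\gamma$ and $Q_k^\gamma$ intersect, and recall from the remark preceding the lemma that each index has at most $N = 3^{2n}$ neighbors under this relation. Since $\sigma \in L^1(bD)$, the measure $\sigma\,d\mu$ is a genuine locally finite measure which is countably additive over the pairwise disjoint sets $Q_k^\gamma \cap bD$; hence $\{\mathbbm{1}_k\}$ furnishes an orthogonal decomposition of $L^2_\sigma(bD)$, so that $f = \sum_k \mathbbm{1}_k f$ and $\|f\|_{L^2_\sigma}^2 = \sum_k \|\mathbbm{1}_k f\|_{L^2_\sigma}^2$ for every $f \in L^2_\sigma(bD)$, the sum running over the countably many cubes meeting $bD$.

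Given such an $f$, I would first expand $Tf = \sum_j \mathbbm{1}_j T f = \sum_j \mathbbm{1}_j T\bigl(\sum_k \mathbbm{1}_k f\bigr) = \sum_j \sum_{k \sim j} \mathbbm{1}_j T \mathbbm{1}_k f$, where hypothesis (1) kills every term with $k \not\sim j$; the interchange of sums and the fact that $T$ commutes with the convergent decomposition of $f$ are justified by the boundedness of $T$ together with dominated convergence. Applying the orthogonal decomposition again, now on the outer index $j$, gives $\|Tf\|_{L^2_\sigma}^2 = \sum_j \bigl\| \sum_{k \sim j} \mathbbm{1}_j T \mathbbm{1}_k f \bigr\|_{L^2_\sigma}^2$. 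For each fixed $j$ the inner sum has at most $N$ terms, so the triangle inequality, hypothesis (2), and the Cauchy--Schwarz inequality yield $\bigl\| \sum_{k \sim j} \mathbbm{1}_j T \mathbbm{1}_k f \bigr\|_{L^2_\sigma}^2 \le \bigl( A \sum_{k \sim j} \|\mathbbm{1}_k f\|_{L^2_\sigma} \bigr)^2 \le N A^2 \sum_{k \sim j} \|\mathbbm{1}_k f\|_{L^2_\sigma}^2$.

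Summing over $j$ and interchanging the order of summation — each $k$ appears for at most $N$ indices $j$ with $j \sim k$ — I would obtain $\|Tf\|_{L^2_\sigma}^2 \le N A^2 \sum_j \sum_{k \sim j} \|\mathbbm{1}_k f\|_{L^2_\sigma}^2 \le N^2 A^2 \sum_k \|\mathbbm{1}_k f\|_{L^2_\sigma}^2 = N^2 A^2 \|f\|_{L^2_\sigma}^2$, and taking square roots gives $\|T\|_{L^2_\sigma} \le N A$. I do not expect any real obstacle here: the combinatorial input ``at most $3^{2n}$ cubes touch a given cube'' is invoked twice, once to control the inner sum via Cauchy--Schwarz and once in the reindexing, which together account for the single factor $N$ in the final bound. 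The only technical points requiring a word of justification are the convergence of $\sum_k \mathbbm{1}_k f$ in $L^2_\sigma$ and its preservation under the bounded operator $T$, and the fact that the orthogonal-decomposition identity survives the passage from $d\mu$ to $\sigma\,d\mu$ (which it does precisely because $\sigma\,d\mu$ is countably additive over the disjoint cubes). This is the argument of \cite{LS1} with Lebesgue measure replaced throughout by $\sigma\,d\mu$.
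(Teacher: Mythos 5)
Your argument is correct and is precisely the block-decomposition proof from Lanzani--Stein \cite{LS1} (Lemma 24) that the paper invokes by reference, carried out explicitly with $\sigma\,d\mu$ in place of $d\mu$. The orthogonal splitting over the disjoint cubes, the triangle inequality plus Cauchy--Schwarz on the $\le N$ neighbors for a factor of $N$, and the reindexing for the second factor of $N$, followed by the square root, is exactly what yields $\|T\|_{L^2_\sigma}\le AN$; this is the same approach as the paper.
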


We have the following theorem:

\begin{lemma}\label{small norm operator}
Given $\varepsilon>0$, there exists an $s=s(\varepsilon)$ so the following holds:

$$||(\mathcal{C}_\varepsilon^s)^*-\mathcal{C}_\varepsilon^s||_{L^2_\sigma(bD)} \leq \varepsilon^{1/2} M_{p,\sigma}$$ where the constant $M_{p,\sigma}$ depends on $p$ and the weight $\sigma$ but not $\varepsilon$.
\begin{proof}
Here the distinction between the Leray-Levi measure and Lebesgue measure becomes important. As before, let $\dagger$ denote the adjoint of an operator taken with respect to Leray-Levi measure, and write

$$(\mathcal{C}_\varepsilon^s)^*-\mathcal{C}_\varepsilon= [(\mathcal{C}_\varepsilon^s)^\dagger-\mathcal{C}_\varepsilon]+[(\mathcal{C}_\varepsilon^s)^*-(\mathcal{C}_\varepsilon^s)^\dagger].$$ We will first show $$||(\mathcal{C}_\varepsilon^s)^\dagger-\mathcal{C}_\varepsilon^s||_{L^2(\sigma)} \leq \varepsilon^{1/2} M_{p,\sigma}.$$

Note as before we decomposed $\mathcal{C}_\varepsilon$, we can write $\mathcal{C}_\varepsilon^s=\mathcal{C}_\varepsilon^{\sharp,s}+\mathcal{R}_\varepsilon^{\sharp,s}$, where $\mathcal{C}_\varepsilon^{\sharp,s}$ is the corresponding truncation of the operator $\mathcal{C}_\varepsilon^\sharp$. Write $$(\mathcal{C}_\varepsilon^s)^\dagger-\mathcal{C}_\varepsilon= [(\mathcal{C}_\varepsilon^{\sharp,s})^\dagger-\mathcal{C}_\varepsilon^{\sharp,s}]+[(\mathcal{R}_\varepsilon^{\sharp,s})^\dagger-\mathcal{R}_\varepsilon^{\sharp,s}]=\mathcal{A}_\varepsilon^s+\mathcal{B}_\varepsilon^s.$$

Recall that the kernel of $\mathcal{R}_\varepsilon$ is majorized by $c_\varepsilon d(w,z)^{-2n+1}.$ Using basically the arguments of Proposition \ref{A_1 control}, we have, for any $\sigma' \in A_1$:

$$\mathcal{R}_\varepsilon^{\sharp,s}(\sigma')(z) \lesssim s \sigma'(z)$$ and

$$(\mathcal{R}_\varepsilon^{\sharp,s})^*(\sigma')(z) \lesssim s \sigma'(z)$$ where the implicit constants depend on the weight $\sigma'$ and $\varepsilon$. Then, by writing $\sigma \in A_2$ as a quotient of $A_1$ weights and applying the reasoning in the proof of Lemma \ref{compactness}, it is straightforward to show that $||\mathcal{R}_\varepsilon^{\sharp,s}||_{L^2_\sigma(bD)} \leq c_{\varepsilon,\sigma} s M_{p,\sigma}.$ Choosing $s$ appropriately small in terms of $\varepsilon$, we obtain the estimate $$||\mathcal{R}_\varepsilon^{\sharp,s}||_{L^2_\sigma(bD)} \leq \varepsilon^{1/2} M_{p,\sigma},$$ as desired. The same estimate is easily seen to hold for $(\mathcal{R}_\varepsilon^{\sharp,s})^\dagger,$ proving the estimate for $\mathcal{B}_\varepsilon^s$.

We now turn to $\mathcal{A}_\varepsilon^s$. It is proven in \cite{LS1} that the operators $\varepsilon^{-1/2}\mathcal{A}_\varepsilon^s$ satisfy smoothness and cancellation conditions that are uniform in $\varepsilon$. Lanzani and Stein apply the $T(1)$ theorem to show that $||\mathcal{A}_\varepsilon^s||_{L^p(bD)} \leq \varepsilon^{1/2} M_p$, where $M_p$ is independent of $\varepsilon$. But the same Calder\'{o}n-Zygmund theory shows that $$||\mathcal{A}_\varepsilon^s||_{L^2_\sigma(bD)} \leq \varepsilon^{1/2} M_{p,\sigma},$$ as we sought to show. We have thus demonstrated the result for $(\mathcal{C}_\varepsilon^s)^\dagger-\mathcal{C}_\varepsilon^s.$

We now turn to the operator $(\mathcal{C}_\varepsilon^s)^*-(\mathcal{C}_\varepsilon^s)^\dagger.$ Estimating the norm of this operator turns out to involve estimating the norm of a commutator. In particular, $(\mathcal{C}_\varepsilon^s)^*-(\mathcal{C}_\varepsilon^s)^\dagger=(\mathcal{C}_\varepsilon^s)^*-\Lambda (\mathcal{C}_\varepsilon^s)^*\Lambda^{-1},$ where $\mathop{d \lambda}=\Lambda \mathop{d \mu}$ and $\Lambda$ is a continuous function that is bounded above and below. Thus, the $L^2_\sigma$ norm of this operator is controlled by

$$||\Lambda||_{L^\infty(bD)} \left\Vert\left[\Lambda^{-1},(\mathcal{C}_\varepsilon^s)^*\right]\right\Vert_{L^2_\sigma(bD)},$$ where $[A,B]=AB-BA$.

Notice by a simple computation, $$\left(\left[\Lambda^{-1},(\mathcal{C}_\varepsilon^s)^*\right]\right)^*=\mathcal{C}_\varepsilon^s \bar{\Lambda}^{-1}-\bar{\Lambda}^{-1}\mathcal{C}_\varepsilon^s=\left[\mathcal{C}_\varepsilon^s, \bar{\Lambda}^{-1}\right],$$ so by duality it suffices to estimate the norm of a commutator $\left[\mathcal{C}_\varepsilon^s,\phi \right]$ on $L^2_\sigma(bD)$ for any $\sigma \in A_2$, where $\phi$ is an arbitrary continuous map $bD \rightarrow \mathbb{C}$. In particular, we claim for fixed $\phi$: 
$$\left \Vert \left[\mathcal{C}_\varepsilon^s,\phi \right]\right \Vert_{L^2_\sigma(bD)} \leq \varepsilon M_{p,\sigma}.$$ This is exactly proven in \cite{LS1}, but for unweighted $L^p$. A key ingredient in the proof is contained in \cite[Proposition 19]{LS1}, which states that we can get a uniform bound $||\mathcal{C}_\varepsilon^s||_{L^p(bD)} \leq M_p$ for $\varepsilon$ and $s$ chosen sufficiently small. This is proven using the $T(1)$ theorem with estimates uniform in $\varepsilon$, but then of course the same proof implies

$$||\mathcal{C}_\varepsilon^s||_{L^2_\sigma(bD)} \leq M_{p,\sigma}.$$

Now we provide a short sketch of how Lemma \ref{cube decomp} leads to the desired conclusion again following the arguments from \cite{LS1}. In particular, we apply the lemma to the operator $[\mathcal{C}_\varepsilon^s,\phi]$ with $\varepsilon$ and $s$ chosen appropriately. The first condition of Lemma \ref{cube decomp} basically follows because $\mathcal{C}_\varepsilon^s$ has a kernel that is supported in a small neighborhood of the diagonal (in particular, we take $\gamma=cs$). 

The second condition follows from the (uniform) continuity of $\phi$. For a cube $Q_k^\gamma$, denote its center by $z_k$. If $s$ is chosen sufficiently small, then by continuity, if $z \in Q_j^\gamma$, where $Q_j^\gamma$ touches $Q_k,$ we have

$$|\phi(z)-\phi(z_k)|< \varepsilon.$$

Now write $\phi=\phi_k+\psi_k$, where $\phi_k(z)=\phi(z)-\phi(z_k)$ and $\psi_k(z)=z_k$. Obviously, $[\mathcal{C}_\varepsilon^s,\phi]=[\mathcal{C}_\varepsilon^s,\phi_k]+ [\mathcal{C}_\varepsilon^s,\psi_k],$ but $[\mathcal{C}_\varepsilon^s,\psi_k]=0$ as $\psi_k$ is constant. Therefore, we have for any cube $Q_j^\gamma$ that touches $Q_k^\gamma$:
\begin{eqnarray*}
||\mathbbm{1}_j[\mathcal{C}_\varepsilon^s,\phi]\mathbbm{1}_k||_{L^2_\sigma(bD)} & = & ||\mathbbm{1}_j[\mathcal{C}_\varepsilon^s,\phi_k]\mathbbm{1}_k||_{L^2_\sigma(bD)}\\
& \leq & ||\mathbbm{1}_j\mathcal{C}_\varepsilon^s \phi_k \mathbbm{1}_k||_{L^2_\sigma(bD)}+ ||\mathbbm{1}_j\phi_k \mathcal{C}_\varepsilon^s \mathbbm{1}_k||_{L^2_\sigma(bD)}\\
& < & 2 \varepsilon ||\mathcal{C}_\varepsilon^s||_{L^2_{\sigma}(bD)}\\
& \leq & 2 \varepsilon M_{p,\sigma}.
\end{eqnarray*}

This completes the proof.

\end{proof}
\end{lemma}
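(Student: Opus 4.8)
The plan is to follow the ``partial inversion'' scheme of \cite{LS2}, but carried out directly in the weighted space $L^2_\sigma(bD)$ rather than in the unweighted $L^p(bD)$. The first step is to trade the Lebesgue adjoint $*$ for the Leray--Levi adjoint $\dagger$ by writing
\[
(\mathcal{C}_\varepsilon^s)^*-\mathcal{C}_\varepsilon^s=\bigl[(\mathcal{C}_\varepsilon^s)^\dagger-\mathcal{C}_\varepsilon^s\bigr]+\bigl[(\mathcal{C}_\varepsilon^s)^*-(\mathcal{C}_\varepsilon^s)^\dagger\bigr].
\]
The first bracket is the ``intrinsic'' error, which, exactly as in the $C^3$ case, splits along $\mathcal{C}_\varepsilon^s=\mathcal{C}_\varepsilon^{\sharp,s}+\mathcal{R}_\varepsilon^{\sharp,s}$ into a Calder\'on--Zygmund piece $\mathcal{A}_\varepsilon^s:=(\mathcal{C}_\varepsilon^{\sharp,s})^\dagger-\mathcal{C}_\varepsilon^{\sharp,s}$ and an integrable-kernel piece $\mathcal{B}_\varepsilon^s:=(\mathcal{R}_\varepsilon^{\sharp,s})^\dagger-\mathcal{R}_\varepsilon^{\sharp,s}$; the second bracket is the genuinely new term and will be recognized as a conjugate of a commutator with $\Lambda^{-1}$.

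For $\mathcal{A}_\varepsilon^s$ I would quote from \cite{LS1} that the normalized operators $\varepsilon^{-1/2}\mathcal{A}_\varepsilon^s$ satisfy size and smoothness estimates with respect to $d$ that are uniform in $\varepsilon$, and are uniformly bounded on $L^2(bD)$ via $T(1)$; weighted Calder\'on--Zygmund theory on the space of homogeneous type $(bD,d,\mu)$ then upgrades this to $\|\mathcal{A}_\varepsilon^s\|_{L^2_\sigma(bD)}\le\varepsilon^{1/2}M_{p,\sigma}$, with $M_{p,\sigma}$ depending on $[\sigma]_2$ (and the CZ constants) but not on $\varepsilon$. For $\mathcal{B}_\varepsilon^s$ I would use that its kernel is dominated by $c_\varepsilon\,d(w,z)^{-2n+1}$ restricted to $\{d\le s\}$; factoring $\sigma=\sigma_1\sigma_2^{-1}$ with $\sigma_1,\sigma_2\in A_1$ and running the estimate of Proposition~\ref{A_1 control} over the ball $B(\,\cdot\,,s)$ (which now contributes a genuine factor of $s$), then invoking the weighted-Schur argument from the proof of Lemma~\ref{compactness}, gives $\|\mathcal{B}_\varepsilon^s\|_{L^2_\sigma(bD)}\le c_\varepsilon\, s\, M_{p,\sigma}$ once the $A_1$-constants of $\sigma_1,\sigma_2$ are absorbed into $M_{p,\sigma}$. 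Choosing $s=s(\varepsilon)\le\varepsilon^{1/2}/c_\varepsilon$ forces this below $\varepsilon^{1/2}M_{p,\sigma}$; note that $s$ then depends only on $\varepsilon$, with all weight dependence inside $M_{p,\sigma}$.

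For the second bracket, since $T^\dagger=\Lambda T^*\Lambda^{-1}$ for every operator $T$ and $\mathop{d\lambda}=\Lambda\mathop{d\mu}$ with $\Lambda$ continuous and bounded above and below, one has $(\mathcal{C}_\varepsilon^s)^*-(\mathcal{C}_\varepsilon^s)^\dagger=\Lambda\bigl[\Lambda^{-1},(\mathcal{C}_\varepsilon^s)^*\bigr]$, so it suffices to bound $\|[\Lambda^{-1},(\mathcal{C}_\varepsilon^s)^*]\|_{L^2_\sigma(bD)}$. Taking the Lebesgue adjoint of this commutator turns it into $[\mathcal{C}_\varepsilon^s,\bar\Lambda^{-1}]$, and since $\sigma\mapsto\sigma^{-1}$ preserves $A_2$, duality reduces the task to showing that for fixed continuous $\phi\colon bD\to\mathbb{C}$ and every $\sigma\in A_2$, $\|[\mathcal{C}_\varepsilon^s,\phi]\|_{L^2_\sigma(bD)}\le\varepsilon M_{p,\sigma}$. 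Here I would use the uniform bound $\|\mathcal{C}_\varepsilon^s\|_{L^2_\sigma(bD)}\le M_{p,\sigma}$ for $\varepsilon,s$ small (the weighted analogue of \cite[Proposition 19]{LS1}, again from $T(1)$-plus-weighted-theory for the main part and the $s$-small remainder estimate above), and then apply Lemma~\ref{cube decomp} to $[\mathcal{C}_\varepsilon^s,\phi]$ with $\gamma=cs$. Condition (1) holds because the kernel of $\mathcal{C}_\varepsilon^s$ vanishes when $d(z,w)\ge s$, so $\mathbbm{1}_j[\mathcal{C}_\varepsilon^s,\phi]\mathbbm{1}_k=0$ for non-touching cubes. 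For touching cubes, writing $\phi=\phi_k+\psi_k$ with $\psi_k\equiv\phi(z_k)$ constant kills the commutator down to $[\mathcal{C}_\varepsilon^s,\phi_k]$, and uniform continuity of the fixed $\phi$ (for $s$ small, depending only on $\varepsilon$ and on $\phi$) gives $\|\phi_k\mathbbm{1}_{Q_j^\gamma}\|_{L^\infty}<\varepsilon$, whence $\|\mathbbm{1}_j[\mathcal{C}_\varepsilon^s,\phi]\mathbbm{1}_k\|_{L^2_\sigma}\le 2\varepsilon\|\mathcal{C}_\varepsilon^s\|_{L^2_\sigma}\le 2\varepsilon M_{p,\sigma}$; Lemma~\ref{cube decomp} then yields $\|[\mathcal{C}_\varepsilon^s,\phi]\|_{L^2_\sigma}\le 2\cdot 3^{2n}\,\varepsilon\,M_{p,\sigma}$. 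Combining the three estimates, adjusting $M_{p,\sigma}$, and using $\varepsilon\le\varepsilon^{1/2}$ for $\varepsilon\le1$ finishes the proof.

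The main obstacle is the second bracket: securing the uniform-in-$\varepsilon$ weighted bound $\|\mathcal{C}_\varepsilon^s\|_{L^2_\sigma(bD)}\le M_{p,\sigma}$ and then running the cube-decomposition/commutator argument of \cite{LS2} in $L^2_\sigma$ instead of $L^p$. Both succeed because weighted Calder\'on--Zygmund theory on $(bD,d,\mu)$ is available with constants depending only on $[\sigma]_2$ and on the (uniform in $\varepsilon$) CZ data, because the $A_1$-factorization of $A_2$ together with Proposition~\ref{A_1 control} controls the remainder pieces with a gain of a power of $s$, and because Lemma~\ref{cube decomp} is insensitive to whether the underlying measure is $\mathop{d\mu}$ or $\sigma\mathop{d\mu}$. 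The one truly new point beyond \cite{LS2} is precisely checking that these reverse-H\"older-free estimates for $\mathcal{B}_\varepsilon^s$ and for the commutator survive the passage to the weighted norm; the rest is a faithful transcription of their argument.
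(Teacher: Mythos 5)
Your proposal reproduces the paper's proof essentially line-for-line: the same decomposition $[(\mathcal{C}_\varepsilon^s)^\dagger-\mathcal{C}_\varepsilon^s]+[(\mathcal{C}_\varepsilon^s)^*-(\mathcal{C}_\varepsilon^s)^\dagger]$, the same split into $\mathcal{A}_\varepsilon^s$ (handled by uniform-in-$\varepsilon$ CZ estimates plus weighted theory) and $\mathcal{B}_\varepsilon^s$ (handled by the $A_1$-factorization and the weighted Schur argument of Proposition~\ref{A_1 control} and Lemma~\ref{compactness}), the same commutator identity $(\mathcal{C}_\varepsilon^s)^*-(\mathcal{C}_\varepsilon^s)^\dagger=\Lambda[\Lambda^{-1},(\mathcal{C}_\varepsilon^s)^*]$ with the same duality reduction to $[\mathcal{C}_\varepsilon^s,\phi]$, and the same application of Lemma~\ref{cube decomp}. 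The only (minor) differences are cosmetic: you are slightly more explicit about why $s$ depends only on $\varepsilon$ and why duality is legitimate ($\sigma^{-1}\in A_2$), and you silently fix a small slip in the paper's definition of $\psi_k$, but the argument and all the key ingredients are identical.
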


The following proposition is an immediate consequence of the well-known reverse H\"{o}lder property of $A_p$ weights. 
\begin{proposition}\label{reverse Holder}
Let $1<p<\infty$ and suppose $\sigma \in A_p$. Then there exists a $\delta>0$ so $\sigma^{1+\delta} \in L^1(bD)$.
\end{proposition}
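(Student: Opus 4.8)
The plan is to invoke the reverse Hölder inequality for Muckenhoupt weights on a space of homogeneous type. Recall that $(bD, d, \mu)$ is a space of homogeneous type, and the class $A_p$ is defined exactly relative to the quasi-metric balls $B \subset bD$ and the (doubling) measure $\mu$. The classical argument of Coifman and Fefferman, which works verbatim in any space of homogeneous type (see, e.g., \cite{Ru}), shows that every $\sigma \in A_p$ satisfies a reverse Hölder inequality: there is a constant $\delta > 0$ (depending only on $[\sigma]_p$, $p$, and the structural constants of the space) and a constant $C$ so that
$$\left(\frac{1}{\mu(B)}\int_B \sigma^{1+\delta}\mathop{d\mu}\right)^{\frac{1}{1+\delta}} \leq \frac{C}{\mu(B)}\int_B \sigma \mathop{d\mu}$$
for every quasi-metric ball $B \subset bD$.

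Given this, the proposition follows by a straightforward covering argument. Since $bD$ is compact, it is covered by finitely many quasi-metric balls $B_1, \dots, B_N$; on each $B_i$ the reverse Hölder inequality gives $\int_{B_i}\sigma^{1+\delta}\mathop{d\mu} \leq C_i \big(\int_{B_i}\sigma\mathop{d\mu}\big)^{1+\delta}\mu(B_i)^{-\delta} < \infty$, using that $\sigma \in L^1(bD)$ and $\mu(B_i) > 0$. Summing over $i$ gives $\sigma^{1+\delta} \in L^1(bD)$. (Alternatively, one avoids the covering by working with a single ball large enough to contain all of $bD$, which is legitimate since the diameter of $bD$ in the quasi-metric $d$ is finite; then the reverse Hölder inequality applied to that one ball directly yields the conclusion.)

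The only substantive point is the reverse Hölder inequality itself, which is not proved in the excerpt but is cited as the \enquote{well-known reverse Hölder property of $A_p$ weights}; its proof in the homogeneous-type setting rests on a Calderón–Zygmund decomposition adapted to the quasi-metric balls and the doubling property of $\mu$, both of which are available here. I expect no genuine obstacle: the statement is essentially a direct corollary of the cited fact together with compactness of $bD$, so the proof in the paper should be one or two lines pointing to \cite{Ru} and remarking that extrapolation-style machinery holds in spaces of homogeneous type (which the authors have already invoked elsewhere). The one thing to be careful about is that the $A_p$ condition here is stated with Lebesgue surface measure $\mu$ while some arguments use the Leray–Levi measure $\lambda$; but since $\Lambda$ is bounded above and below, $\sigma \in A_p(\mu)$ iff $\sigma \in A_p(\lambda)$ and $L^1(bD,\mu) = L^1(bD,\lambda)$, so this causes no difficulty.
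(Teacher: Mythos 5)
Your proposal is correct and matches the paper's approach: the paper offers no proof at all, simply declaring the proposition "an immediate consequence of the well-known reverse Hölder property of $A_p$ weights" on the space of homogeneous type $(bD,d,\mu)$. Your write-up supplies the details (the reverse Hölder inequality, and the covering/single-ball argument together with $\sigma\in L^1(bD)$ and compactness of $bD$) that the paper leaves implicit, and correctly handles the $\mu$ versus $\lambda$ equivalence.
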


We are now finally ready to prove the main theorem. 

\begin{proof}[Proof of Theorem \ref{szego main2}]
As noted before, it suffices to prove the result for $p=2.$ Recall $\mathcal{A}_\varepsilon=(\mathcal{C}_\varepsilon^s)^*-\mathcal{C}_\varepsilon^s$ and $\mathcal{D}_\varepsilon=(\mathcal{R}_\varepsilon^s)^*-\mathcal{R}_\varepsilon^s$.  Thus, the Kerzman-Stein equation takes the form
$$\mathcal{S}(I-\mathcal{A}_\varepsilon)-\mathcal{S}\mathcal{D}_\varepsilon=\mathcal{C}_\varepsilon.$$ By Lemma \ref{small norm operator}, if $\varepsilon$ and $s$ are chosen sufficiently small, then $||\mathcal{A}_\varepsilon||_{L^2_\sigma(bD)}<1.$ Inverting $\mathcal{A}_\varepsilon$ using a Neumann series yields:

$$\mathcal{S}=\mathcal{C}_\varepsilon(I-\mathcal{A}_\varepsilon)^{-1}+\mathcal{S}\mathcal{D}_\varepsilon(I-\mathcal{A}_\varepsilon)^{-1}.$$

By Lemma \ref{main c2 term}, the operator $\mathcal{C}_\varepsilon(I-\mathcal{A}_\varepsilon)^{-1}$ maps $L^2_\sigma(bD)$ to itself. Now, by discussions above $\mathcal{D}_\varepsilon(I-\mathcal{A}_\varepsilon)^{-1}$ maps $L^2_\sigma(bD)$ to $L^\infty(bD)$, and hence maps $L^2_\sigma(bD)$ to $L^p(bD)$ boundedly for any $p$, $1<p<\infty$. Additionally, by the principle result in \cite{LS1}, $\mathcal{S}$ extends to a bounded operator on $L^p$. So in particular $\mathcal{S}\mathcal{D}_\varepsilon(I-\mathcal{A}_\varepsilon)^{-1}$ maps $L^2_\sigma(bD)$ to $L^p$ for all $p$, $1<p<\infty$. We claim that if $p$ is chosen sufficiently large (depending on $\sigma$), then $||g||_{L^2_\sigma(bD)} \lesssim ||g||_{L^p(bD)}$ for all measurable functions $g$. Then $$||\mathcal{S}\mathcal{D}_\varepsilon(I-\mathcal{A}_\varepsilon(f))||_{L^2_\sigma(bD)} \lesssim ||\mathcal{S}\mathcal{D}_\varepsilon(I-\mathcal{A}_\varepsilon(f))||_{L^p(bD)}$$ for all measurable $f$, which will then establish the result.

To prove the claim, we use Proposition \ref{reverse Holder}. In particular, we have, using H\"{o}lder's inequality with exponents $\frac{p}{2}$ and $r=\left(\frac{p}{2}\right)'$:
\begin{eqnarray*}
||g||_{L^2_\sigma(bD)}^2 & = & \int_{bD} |g|^2 \sigma \mathop{d \mu}\\
& \leq & \left(\int_{bD} |g|^p \mathop{d \mu}\right)^{\frac{2}{p}} \left(\int_{bD} \sigma^{r} \mathop{d \mu}\right)^{\frac{1}{r}}\\
& \lesssim & ||g||_{L^p(bD)}^2
\end{eqnarray*}

 \noindent provided $p$ is chosen so $r<1+\delta$. This completes the proof.

\end{proof}

\section{The Bergman Projection}\label{Bergman section}
In this section, we will study the Bergman projection on weighted spaces under the assumption that $D$ is a $C^4$ domain. Our main goal is to prove the following theorem, which is a more detailed version of Theorem \ref{bergman main}. Here $\mathcal{T}$ is the auxiliary operator corresponding to the Bergman projection that we discussed in subsection \ref{Intro Proof Outline}.

\begin{theorem}\label{bergman main detailed}
Let $D$ be strongly pseudoconvex with $C^4$ boundary. Then for $1<p<\infty$ and $\sigma \in B_p$, the following hold:
\begin{enumerate}
\item The operator $\mathcal{T}^*-\mathcal{T}$ is compact on $L^p_\sigma(D)$.
\item The operator $I-(\mathcal{T}^*-\mathcal{T})$ is invertible on $L^p_\sigma(D)$.
\item The Bergman projection $\mathcal{B}$ extends to a bounded operator on $L^p_\sigma(D)$ and satisfies

$$\mathcal{B}=\mathcal{T}(I-(\mathcal{T}^*-\mathcal{T}))^{-1}.$$

\end{enumerate}
\end{theorem}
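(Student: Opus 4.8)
\textbf{Proof proposal for Theorem \ref{bergman main detailed}.}

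The plan is to mirror the structure of the proof of Theorem \ref{szego main detailed}, but with the boundary triple $(bD,d,d\lambda)$ replaced by an analogous space of homogeneous type on the solid domain $D$, and with the Szeg\H{o}/Cauchy-Fantappi\'e pair $(\mathcal{S},\mathcal{C})$ replaced by the Bergman/solid-Cauchy-Fantappi\'e pair $(\mathcal{B},\mathcal{T})$. First I would recall (or establish in the background subsection of Section \ref{Bergman section}) the construction of $\mathcal{T}$ as a solid Cauchy-Fantappi\'e type operator built from a generating form, together with the quasi-metric on $D$ that reflects the boundary geometry; the $C^4$ hypothesis enters precisely here, since after solving the relevant $\overline{\partial}$ problem one needs enough derivatives of $\rho$ to keep the correction kernel and the analogue of the function $\Lambda$ sufficiently regular (Lipschitz, say) for the kernel-difference estimate to go through. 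As in the Szeg\H{o} case, $\mathcal{T}$ produces and reproduces holomorphic functions, so on $L^2(D)$ one gets $\mathcal{B}\mathcal{T}=\mathcal{T}$ and $\mathcal{T}\mathcal{B}=\mathcal{B}$; taking the adjoint of the second (using that $\mathcal{B}$ is self-adjoint on unweighted $L^2(D)$) and subtracting yields the Kerzman-Stein equation $\mathcal{B}(I-(\mathcal{T}^*-\mathcal{T}))=\mathcal{T}$.

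The three assertions then follow in order. For (1), I would decompose $\mathcal{T}=\mathcal{T}^\sharp+\mathcal{R}$ into a main term with kernel of size $d(w,z)^{-2n}$-type (adapted to the solid quasi-metric, with the correct exponent reflecting $V(B)\approx$ the appropriate power of the radius) plus a less singular remainder, write
$$\mathcal{T}^*-\mathcal{T}=\big((\mathcal{T}^\sharp)^*-\mathcal{T}^\sharp\big)-\mathcal{R}+\mathcal{R}^*,$$
and show each piece is compact on $L^p_\sigma(D)$ for $\sigma\in B_p$. The remainder and the main-term difference should both have integrable kernels obeying a size bound one power less singular than $\mathcal{T}^\sharp$ — the solid analogue of Lemma \ref{kernel difference} — using the $C^4$ regularity of $\rho$ (via the Lipschitz property of the solid analogue of $\Lambda$) together with the solid analogue of Proposition \ref{diff of Levi}. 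Then one runs the solid-domain versions of Propositions \ref{A_1 control}, \ref{better kernel}, \ref{Lp improvement} and Lemma \ref{compactness}: integrate the kernel against $B_1$ weights over dyadic annuli, deduce the operators improve $L^p$-integrability, and deduce compactness via the double-norm criterion, using the factorization of $B_p$ weights into $B_1$ weights in this space of homogeneous type. For (2), since $\mathcal{T}^*-\mathcal{T}$ is compact it suffices to show $1$ is not an eigenvalue; and $\ii(\mathcal{T}^*-\mathcal{T})$ is self-adjoint on $L^2(D)$, so the bootstrapping argument of Lemma \ref{spectrum} — an eigenfunction in $L^p_\sigma$ lies in $L^1$ by H\"older, then improves iteratively into $L^2$, contradicting that $\mathcal{T}^*-\mathcal{T}$ has no fixed point on unweighted $L^2(D)$ — gives invertibility of $I-(\mathcal{T}^*-\mathcal{T})$ on $L^p_\sigma(D)$. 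For (3), one combines the $L^p_\sigma$-boundedness of $\mathcal{T}=\mathcal{T}^\sharp+\mathcal{R}$ (the main term being a Calder\'on-Zygmund operator on the solid space of homogeneous type, hence bounded on $L^p_\sigma$ for $\sigma\in B_p$, and $\mathcal{R}$ bounded since compact) with (2) and the Kerzman-Stein equation to write $\mathcal{B}=\mathcal{T}(I-(\mathcal{T}^*-\mathcal{T}))^{-1}$, a composition of $L^p_\sigma(D)$-bounded operators.

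The main obstacle I anticipate is \emph{not} the soft functional-analytic machinery — which transplants almost verbatim from Section \ref{szego section}, the only genuinely delicate point there being that $B_p$ weights lack the reverse H\"older property, which is why this argument needs true compactness rather than the Neumann-series trick of Section \ref{szego minimal} — but rather the hard analysis underlying the kernel estimates on the solid domain: establishing that $\mathcal{T}^\sharp$ is genuinely a Calder\'on-Zygmund operator with respect to the correct quasi-metric on $D$ (size, smoothness, and $L^2(D)$-boundedness, the last presumably via a Schur-type or $T(1)$-type argument adapted to the solid setting), and checking that $\mathcal{T}^*-\mathcal{T}$ loses exactly one degree of singularity. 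This is where the $C^4$ assumption is forced: one differentiates the defining function twice to form the Levi polynomial and then needs the resulting coefficients (and the solid analogue of $\Lambda$, and the correction kernel $C_2$) to be Lipschitz, costing two more derivatives. I would lean on the work of McNeal and of Lanzani-Stein for the precise kernel bounds and the space-of-homogeneous-type structure on $D$, and on the authors' prior work \cite{HWW} for the identification of the $B_p$ class with the correct quasi-metric, citing those for the estimates rather than reproving them.
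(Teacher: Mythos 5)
Your overall architecture is correct and matches the paper: Kerzman--Stein equation for $(\mathcal{B},\mathcal{T})$, kernel size estimate for $\mathcal{T}^*-\mathcal{T}$ giving $L^p$-improvement, the iterative bootstrap showing $1$ is not an eigenvalue, compactness of the error on $L^p_\sigma$, hence invertibility of $I-(\mathcal{T}^*-\mathcal{T})$, and then $\mathcal{B}=\mathcal{T}(I-(\mathcal{T}^*-\mathcal{T}))^{-1}$. You also correctly identify why the Neumann-series route of Section \ref{szego minimal} is unavailable: $B_p$ weights lack reverse H\"older.

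There is, however, one genuine gap in your transplant of Lemma \ref{compactness} to the solid domain, and it is precisely where the Szeg\H{o} and Bergman settings diverge. In the Szeg\H{o} case the control $\int_{B(z,\delta)}|K|\sigma \lesssim C(\delta)\sigma(z)$ with $C(\delta)\to 0$ is achieved by truncating away from the diagonal alone. In the Bergman case the analogous estimate (the paper's Lemma \ref{B_1 control}) is
$$\int_{B(z,\delta)}|K(z,w)|\sigma(w)\mathop{dV(w)} \lesssim \big(\delta^{1/2}+d(z,bD)^{1/2}\big)\sigma(z),$$
and the extra $d(z,bD)^{1/2}$ term does \emph{not} vanish as $\delta\to 0$. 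A diagonal-only truncation therefore fails to give operator-norm convergence. To fix this, one needs the additional kernel estimate $|K(z,w)|\lesssim\min\{d(z,bD)^{-(n+1/2)},d(w,bD)^{-(n+1/2)}\}$ and a truncation that cuts off the kernel simultaneously near the diagonal \emph{and} near the boundary; then the remaining piece is supported where both $\delta_j^{1/2}$ and $d(z,bD)^{1/2}$ are small, and the operator norm of the tail is $\lesssim\delta_j^{1/2}$. This is exactly what the paper's Lemma \ref{compactness2} does, and without the boundary truncation the argument you sketch does not close. Closely related: you say $\mathcal{T}^*-\mathcal{T}$ should lose "exactly one degree of singularity," but in the convention where the solid quasi-metric satisfies $d\approx |g|$ and $V(B(z,r))\approx r^{n+1}$, the gain is only a half power (from $d^{-(n+1)}$ to $d^{-(n+1/2)}$), which is why the error bound has $\delta^{1/2}$ rather than $\delta$, and why the boundary term is borderline in the first place.

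A few smaller route differences worth noting, though not gaps. The paper does not introduce a Leray--Levi measure or a density $\Lambda$ on the solid domain; instead it writes the kernel of $\mathcal{T}_1$ as $\mathcal{N}(w,z)/g(w,z)^{n+1}$ and the $C^4$ hypothesis enters to make $\mathcal{N}$ of class $C^1$ in $w$, which is what lets $|\mathcal{N}(w,z)-\mathcal{N}(w,w)|\lesssim|w-z|$ and $|\mathcal{N}_0(w,w)-\mathcal{N}_0(z,z)|\lesssim|w-z|$ absorb into the less-singular part. Your $\mathcal{T}^\sharp+\mathcal{R}$ decomposition, with $\mathcal{T}^\sharp$ having kernel $\mathcal{N}(w,w)/g(w,z)^{n+1}$, is essentially a repackaging of the same computation, so that is fine. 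Finally, for the $L^p_\sigma$-boundedness of the main term, the paper does not need a $T(1)$ argument in the solid setting: it dominates $|\mathcal{T}_1|$ pointwise by the positive operator $\Gamma$ with kernel $|g(w,z)|^{-(n+1)}$ and then invokes the weighted bound for $\Gamma$ from \cite{HWW}, which is established by a Calder\'on--Zygmund argument applied to the positive majorant. That is a cleaner route than running a $T(1)$ argument for a non-positive $\mathcal{T}^\sharp$, and you should lean on it rather than anticipating a new $T(1)$-type step.
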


\subsection{Background and Setup}\label{theproblemBergman}
Now we let $D$ be a strongly pseudoconvex  domain with $C^4$ defining function $\rho$. As in Lanzani-Stein \cite{LS2}, we can construct an integral operator $\mathcal{T}=\mathcal{T}_1+\mathcal{T}_2$ that integrates over the interior of the domain $D$, where $\mathcal{T}_1$ is constructed using Cauchy-Fantappi\'{e} theory and $\mathcal{T}_2$ is obtained by solving a $\bar{\partial}$ problem. The operator $\mathcal{T}$ has the property that it produces and reproduces holomorphic functions.  

We now make several definitions that are analogous to our treatment above of the Szeg\H{o} projection. We will slightly abuse notation by reusing certain letters to represent analogous objects in the Bergman case.
Define

$$g(w,z):=-\rho(w)-\chi(P_w(z))+(1-\chi)|w-z|^2$$

\noindent where $P_w(z)$ denotes the Levi polynomial at $w$ and $\chi$ is an appropriately chosen $C^\infty$ cutoff function. In particular, using the strict pseudoconvexity of $D$, $\chi$ can be chosen so 

$$\text{Re } g(w,z) \gtrsim
-\rho(w)-\rho(z)+c|w-z|^2 .$$

Now, as before define the $(1,0)$ form in $w$  

$$G(w,z):= \chi\left(\sum_{j=1}^{n} \frac{\partial \rho}{\partial w_j}(w)\mathop{d w_j} -\frac{1}{2}\sum_{j,k=1}^n\frac{\partial^2 \rho}{\partial w_j \partial w_k }(w)(w_k-z_k)\mathop{d w_j}\right)+ (1-\chi) \sum_{j=1}^n (\bar{w}_j-\bar{z}_j)\mathop{d w_j}.$$

\noindent Note that $G$ has the property that if we let 

\begin{equation}\hat{\eta}(w,z)=\frac{G(w,z)}{g(w,z)+\rho(w)}, \label{11}\end{equation}

\noindent then $$\langle \hat{\eta}(w,z),w-z\rangle= 1$$
\noindent for all $z \in D$ and $w$ in neighborhood of $bD$. Note that \eqref{11} indicates $\hat{\eta}$ is a generating form. However, we instead define the $(1,0)$ form in $w$:

$$\eta(w,z)=\frac{G(w,z)}{g(w,z)}$$

\noindent and associated integral operator 

$$\mathcal{T}_1(f)(z):=\frac{1}{(2 \pi \ii)^n} \int_D (\bar{\partial_w}\eta)^n(w,z) f(w),$$ where $(\bar{\partial_w}\eta)^n$ denotes the wedge product taken $n$ times. We have the following proposition (see \cite[Proposition 3.1]{LS2}):

\begin{proposition}\label{reproduce interior} Suppose $f$ is holomorphic on $D$ and belongs to $L^1(D)$. Then for all $z \in D$, one has

$$\mathcal{T}_1(f)(z)=f(z).$$

\end{proposition}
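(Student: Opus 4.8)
\textbf{Proof proposal for Proposition~\ref{reproduce interior}.} The natural approach is the standard Cauchy--Fantappi\`e/homotopy-formula argument, adapted to the specific generating form $\eta(w,z) = G(w,z)/g(w,z)$. The plan is as follows. First I would recall the Bochner--Martinelli--Koppelman homotopy formula: for a $(0,q)$-form $u$ on $\overline{D}$ of class $C^1$, one has
$$ u = \bar\partial \int_D u \wedge \Omega_q + \int_D \bar\partial u \wedge \Omega_q + \int_{bD} u \wedge \Omega'_q, $$
where the kernels are built out of the Bochner--Martinelli kernel. For the holomorphic reproducing statement we only need the case $q=0$ and $f$ holomorphic, so $\bar\partial f = 0$; the content then is that the Cauchy--Fantappi\`e kernel $(2\pi\ii)^{-n}(\bar\partial_w \eta)^n$ associated to \emph{any} generating form $\eta$ reproduces holomorphic $f$, provided one is allowed to replace the boundary Bochner--Martinelli kernel by the boundary piece coming from $\eta$ (this is legitimate because the difference of two Cauchy--Fantappi\`e kernels built from two generating forms is $\bar\partial$-exact, by the standard interpolation-of-sections argument with $\eta_t = t\eta_0 + (1-t)\eta_1$). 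Concretely, I would compare $\eta(w,z) = G/g$ with the Bochner--Martinelli generating form $\beta(w,z) = \sum_j(\bar w_j - \bar z_j)\,dw_j / |w-z|^2$, note both satisfy $\langle \cdot, w-z\rangle = 1$, and invoke the homotopy between them to conclude that their Cauchy--Fantappi\`e integrals over $D$ agree when applied to a holomorphic $L^1$ function and the relevant boundary integrals vanish or cancel.

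The second main point is to check that the boundary terms genuinely vanish (or that the $L^1$ hypothesis suffices to make the manipulations legitimate near $bD$, where $\eta$ may be singular). Since $\eta(w,z)$ is a generating form only for $w$ in a neighborhood of $bD$ and $z \in D$, and since the operator $\mathcal{T}_1$ integrates over all of $D$, one must be slightly careful: the form $(\bar\partial_w\eta)^n$ is smooth in $w$ away from $w = z$ and from the locus where $g(w,z) = 0$; the lower bound $\mathrm{Re}\,g(w,z) \gtrsim -\rho(w) - \rho(z) + c|w-z|^2$ guarantees $g(w,z) \neq 0$ for $w, z \in \overline{D}$ with $(w,z)$ off the diagonal, so the only singularity is the expected one at $w=z$, which is integrable and handled exactly as in the Bochner--Martinelli case. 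The $L^1(D)$ hypothesis on $f$ is then exactly what is needed to justify the limiting argument (excising a small ball around $z$, applying Stokes/the homotopy formula on $D\setminus B(z,\epsilon)$, and letting $\epsilon\to 0$).

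I expect the main obstacle to be bookkeeping rather than conceptual: verifying that the construction of $G$ with the cutoff $\chi$ does not destroy the generating-form property in the region where it matters, and carefully tracking which boundary integrals appear and why they cancel against the corresponding Bochner--Martinelli boundary integrals. In practice the cleanest route is simply to cite \cite[Proposition 3.1]{LS2} or the corresponding statement in \cite{R} (or Range's book on holomorphic integral representations), since this reproducing property for Cauchy--Fantappi\`e integrals over the solid domain is classical; the paper already does exactly this. So my proof would be: (1) state that $\eta = G/g$ is a generating form by the computation $\langle \hat\eta, w-z\rangle = 1$ together with the observation that on $D$ one has $g(w,z) = g(w,z)+\rho(w)$ only up to the factor $\rho(w)$ — actually here one uses that $\eta$ and $\hat\eta$ differ by a scalar factor and produce the same $(\bar\partial_w\eta)^n$ up to lower-order terms that integrate to the same thing — and (2) invoke the homotopy formula / interpolation of generating forms to reduce to Bochner--Martinelli, which reproduces holomorphic $L^1$ functions. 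The delicate clause is step~(1): reconciling the fact that it is $\hat\eta$, not $\eta$, that is literally normalized, so I would either argue that $(\bar\partial_w\eta)^n$ and $(\bar\partial_w\hat\eta)^n$ have the same action on holomorphic functions, or note that the $\bar\partial_w$-derivative annihilates the discrepancy; this is precisely the point where I would lean on the reference \cite{LS2}.
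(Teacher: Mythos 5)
The paper does not prove this proposition; it is quoted directly from \cite[Proposition 3.1]{LS2}. Your sketch has the right ingredients in broad outline, but it contains two genuine confusions, the first of which would derail any write-up.

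You assert that for $z\in D$ the form $(\bar\partial_w\eta)^n$ has ``the expected'' singularity at $w=z$, and that one would ``excise a small ball around $z$ \dots and let $\epsilon\to 0$.'' This is not the right picture. Because $g(w,z)=-\rho(w)-\chi P_w(z)+(1-\chi)|w-z|^2$ carries the term $-\rho(w)$, one has $g(z,z)=-\rho(z)>0$ for $z\in D$, and more generally $\mathrm{Re}\,g(w,z)\gtrsim -\rho(w)-\rho(z)+c|w-z|^2>0$ for \emph{all} $w\in\overline D$. The kernel of $\mathcal T_1(\cdot)(z)$ is therefore smooth and bounded on $\overline D$ for every fixed interior $z$ --- this is precisely what makes the solid-domain Cauchy--Fantappi\`e operator differ from the boundary Szeg\H o case, and it means Stokes' theorem applies with no excision. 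The $L^1$ hypothesis enters only to relax the continuity-up-to-the-boundary requirement: one works on the sublevel sets $D_\varepsilon=\{\rho<-\varepsilon\}$, where $f$ is smooth up to the boundary, and passes to the limit $\varepsilon\to 0$ by dominated convergence against the bounded kernel.

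Your ``delicate clause'' about $\eta$ versus $\hat\eta$ is the genuine crux, and you leave it unresolved (falling back on the citation). To close it: note $\eta$ is \emph{not} a generating form on $D$, since $\langle\eta,w-z\rangle=(g+\rho)/g\ne 1$ there; it is only $\hat\eta=G/(g+\rho)$ that satisfies $\langle\hat\eta,w-z\rangle=1$. The Leray formula applied to $\hat\eta$ gives $f(z)=(2\pi\ii)^{-n}\int_{bD}f\,j^*[\hat\eta\wedge(\bar\partial\hat\eta)^{n-1}]$. Write $\hat\eta=\eta-\rho\alpha$ with $\alpha=G/(g(g+\rho))$; on $bD$ one has $\rho=0$, so $\hat\eta=\eta$ and $\bar\partial\hat\eta=\bar\partial\eta-\bar\partial\rho\wedge\alpha$. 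Expanding $\hat\eta\wedge(\bar\partial\hat\eta)^{n-1}$, every term beyond $\eta\wedge(\bar\partial\eta)^{n-1}$ contains a factor $\alpha$, which dies upon wedging with $\eta$ because $\eta\wedge\alpha=0$ (both are scalar multiples of the single $(1,0)$-form $G$). Hence $j^*[\hat\eta\wedge(\bar\partial\hat\eta)^{n-1}]=j^*[\eta\wedge(\bar\partial\eta)^{n-1}]$, and Stokes converts the boundary integral of the $\eta$-form into $\int_D f(\bar\partial\eta)^n=\mathcal T_1 f(z)$ since the integrand is nonsingular inside. Once one has the Leray formula for the generating form $\hat\eta$, the detour through Bochner--Martinelli and the homotopy $\eta_t=t\eta_0+(1-t)\eta_1$ is an unnecessary complication.
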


A computation shows the operator $\mathcal{T}_1$ has kernel \begin{equation} K_1(w,z)=\frac{N(w,z)}{(g(w,z))^{n+1}} \label{12} \end{equation} where $N(w,z)$ is an $(n,n)$ form of class $C^1$ (in $w$) with coefficients smooth in $z$. In particular, we have (see \cite{LS2}):
\begin{equation}N(w,z)= -\left((\bar\partial \eta)^{n-1} \wedge \bar\partial_w g \wedge \eta+g(\bar \partial \eta)^n \right).\label{13} \end{equation} We write $N(w,z)=\mathcal{N}(w,z)dV(w)$, where $dV$ denotes the Euclidean volume form. Notice the fact that $\mathcal{N}(w,z)$ is of class $C^1$ in $w$ is a direct consequence of the fact that $D$ has $C^4$ boundary.

Proposition \ref{reproduce interior} guarantees that $\mathcal{T}_1$ reproduces holomorphic functions, but as in the Szeg\H{o} case we need to add a correction operator to ensure that it produces holomorphic functions. The details can be found in \cite{LS2}, and again involve solving a $\bar{\partial}$ problem on a strongly pseudoconvex domain that contains $D$. We have the following proposition concerning $\mathcal{T}_2$ and the operator $\mathcal{T}=\mathcal{T}_1+\mathcal{T}_2$ (see \cite[Proposition 3.2]{LS2}):

\begin{proposition} \label{correction bergman}
There is an integral operator  $\mathcal{T}_2$ defined
$$\mathcal{T}_2f(z):= \int_{D} K_2(z,w) f(w) \mathop{d V(w)}$$ with 

$$\sup_{(z,w) \in \bar{D} \times \bar{D}}|K(z,w)|<\infty$$  that satisfies:
\begin{enumerate}
\item If $f \in L^1(D)$, then $\mathcal{T}(f)$ is holomorphic on $D$.

\item If, in addition, $f$ is holomorphic on $D$, then $\mathcal{T}(f)(z)=f(z)$ for $z \in D$.

\end{enumerate}
\end{proposition}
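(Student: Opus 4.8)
The plan is to carry out the classical correction of a Cauchy-Fantappi\'e integral by a $\bar\partial$-solution, due in this context to Kerzman-Stein, Ligocka and Lanzani-Stein (see \cite{KS2,Lig2,LS2}, and \cite{R} for a textbook treatment). First I would fix a bounded strongly pseudoconvex domain $\Omega$ with $C^\infty$ boundary and $\bar D \Subset \Omega$; such an $\Omega$ exists because $\rho$ is strictly plurisubharmonic on a neighbourhood of $bD$, so one may modify $\rho$ away from $bD$ and pass to a slightly larger sublevel set. Since $\Omega$ is smoothly bounded and strongly pseudoconvex, it carries an explicit Henkin-Ram\'irez (equivalently Grauert-Lieb, Lieb-Range) homotopy operator $\mathcal{S}_\Omega$, given by an integral kernel $S_\Omega(z,\zeta)$ with $\sup_{z}\int_\Omega |S_\Omega(z,\zeta)|\,dV(\zeta) < \infty$, such that $\bar\partial\,\mathcal{S}_\Omega v = v$ for every smooth $\bar\partial$-closed $(0,1)$-form $v$ on $\bar\Omega$.

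The crucial observation is that the failure of $\mathcal{T}_1$ to produce holomorphic functions is harmless. On the set $\{|w-z| \le c/2\}$, where the cutoff $\chi$ equals $1$, we have $g(w,z) = -\rho(w) - P_w(z)$ and $G(w,z) = \sum_j\bigl(\tfrac{\partial\rho}{\partial w_j}(w) - \tfrac12\sum_k \tfrac{\partial^2\rho}{\partial w_j\partial w_k}(w)(w_k-z_k)\bigr)dw_j$, and hence $\eta$, $\bar\partial_w g$, $\bar\partial_w\eta$, and therefore the numerator $\mathcal{N}(w,z)$ in \eqref{12}--\eqref{13}, are all \emph{holomorphic in $z$}, because the Levi polynomial $P_w(z)$ is. Consequently $\bar\partial_z K_1(w,z) \equiv 0$ for $|w-z| \le c/2$; away from the diagonal $K_1$ is smooth (this is where $\rho \in C^4$ is spent, making $\mathcal{N}$, and so $\bar\partial_z K_1$, of class $C^1$), so $\bar\partial_z K_1$ extends to a bounded, continuous $(0,1)$-form-valued kernel on $\bar D \times \bar D$. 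Writing $\mathcal{E}$ for the associated operator, one has, at least for $f \in C^1(\bar D)$,
$$\bar\partial_z \mathcal{T}_1 f(z) = \int_D \bar\partial_z K_1(w,z)\, f(w)\, dV(w) =: \mathcal{E}f(z),$$
and $\bar\partial_z(\mathcal{E}f) = \bar\partial_z^2\mathcal{T}_1 f = 0$, so $\mathcal{E}f$ is $\bar\partial$-closed. After multiplying by a cutoff and solving one further $\bar\partial$-equation (solvable, since $\Omega$ is Stein), $\mathcal{E}f$ extends to a smooth $\bar\partial$-closed $(0,1)$-form on $\Omega$ agreeing with it on $\bar D$, still given by an integral operator in $f$ with a bounded kernel $E(w,\cdot)$.

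Next I would set $\mathcal{T}_2 f := -\,\mathcal{S}_\Omega(\mathcal{E}f)\big|_D$ and $\mathcal{T} := \mathcal{T}_1 + \mathcal{T}_2$. Then $\bar\partial_z\mathcal{T} f = \bar\partial_z\mathcal{T}_1 f - \bar\partial\,\mathcal{S}_\Omega(\mathcal{E}f) = \mathcal{E}f - \mathcal{E}f = 0$ on $D$, which is (1); and if $f$ is holomorphic then $\mathcal{T}_1 f = f$ by Proposition \ref{reproduce interior}, so $\mathcal{E}f = \bar\partial_z f = 0$, hence $\mathcal{T}_2 f = 0$ and $\mathcal{T} f = f$, which is (2). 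Finally, Fubini gives the kernel $K_2(z,w) = -\int_\Omega S_\Omega(z,\zeta) \wedge E(w,\zeta)$, the composition of $S_\Omega$ with $\mathcal{E}$'s kernel, so
$$\sup_{\bar D\times\bar D}|K_2| \;\lesssim\; \Bigl(\sup_{z}\int_\Omega |S_\Omega(z,\zeta)|\,dV(\zeta)\Bigr)\,\sup_{\bar D\times\bar\Omega}|E| \;<\;\infty,$$
using that $E$ is bounded and $S_\Omega$ absolutely integrable.

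The step I expect to require real care is the differentiation identity $\bar\partial_z\mathcal{T}_1 f = \mathcal{E}f$: because $K_1 \sim |g(w,z)|^{-(n+1)}$ is not absolutely integrable on the diagonal, $\mathcal{T}_1 f$ is only a singular (Bergman-type) integral, and one cannot naively differentiate under the integral sign — a ``residue'' from the moving diagonal singularity must be shown to vanish. The clean way to handle this is the Bochner-Martinelli-Koppelman / Cauchy-Fantappi\'e-Leray homotopy formula for $C^1$ data, which accounts for the diagonal exactly and yields this identity together with Proposition \ref{reproduce interior} in one stroke; the extension of (1) and (2) from $f \in C^1(\bar D)$ to $f \in L^1(D)$ then follows by density once $\mathcal{T}$ is known to be bounded on $L^p$, which is proved in the remainder of this section. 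All of this is carried out in \cite[Section 3]{LS2} (see also \cite{R}).
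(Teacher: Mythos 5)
The paper offers no proof of this proposition --- it quotes it directly from \cite[Proposition 3.2]{LS2} after a one-line pointer to solving a $\bar\partial$ problem on a smooth strongly pseudoconvex $\Omega \supset \bar D$ --- so there is no house argument to compare against. Your sketch is the standard Kerzman--Stein/Ligocka/Range/Lanzani--Stein construction, and the two load-bearing observations are exactly right: $\bar\partial_z K_1 \equiv 0$ on the set where $\chi \equiv 1$ because $g$ and $G$ are there polynomial in $z$, and the $C^4$ regularity is what makes the remaining off-diagonal piece of $\bar\partial_z K_1$ continuous and bounded on $\bar D \times \bar D$. The correction by a bounded Henkin-type $\bar\partial$-solution operator and the Fubini computation giving $\sup |K_2| < \infty$ are also correct.

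Two points need tightening. First, the ``residue'' you flag when passing $\bar\partial_z$ inside the integral does not actually arise: for $z$ confined to a compact subset $K$ of $D$ one has $\text{Re}\, g(w,z) \gtrsim -\rho(z) \geq \delta_K > 0$ uniformly in $w \in D$, so $K_1(\cdot,z)$ and $\bar\partial_z K_1(\cdot,z)$ are uniformly bounded there, the integral defining $\mathcal{T}_1 f(z)$ is absolutely convergent for every $f \in L^1(D)$, and $\bar\partial_z \mathcal{T}_1 f = \mathcal{E}f$ follows from dominated convergence alone. The Koppelman homotopy formula is spent on Proposition~\ref{reproduce interior}, not here, and the density passage from $C^1(\bar D)$ to $L^1(D)$ is both unnecessary and slightly circular, since it invokes $L^p$-boundedness of $\mathcal{T}$ that is proved downstream of this proposition. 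Second, and more substantively, the ``multiply by a cutoff, solve one further $\bar\partial$-equation'' extension of $\mathcal{E}f$ to $\Omega$ is not as innocuous as you make it sound: if $\tilde{\mathcal{E}}f = \psi\,\mathcal{E}f - v$ with $\bar\partial v = \bar\partial\psi \wedge \mathcal{E}f$, the solution $v$ has no reason to vanish on $\bar D$, so $\tilde{\mathcal{E}}f|_D \neq \mathcal{E}f|_D$ in general and the cancellation $\bar\partial_z \mathcal{T}f = 0$ is lost. This can be repaired with a support-controlled $\bar\partial$-solution, but the cleaner (and standard, cf.\ \cite{LS2,R}) route sidesteps it entirely: the estimate $\text{Re}\, g(w,z) \gtrsim -\rho(w) - \rho(z) + |w-z|^2$ shows that $\bar\partial_z K_1(w,z)$, extended by zero across $\{|w-z| \le c/2\}$, is already well defined, bounded, and $\bar\partial_z$-closed for all $z$ in a fixed small enlargement $\Omega = \{\rho < \varepsilon\}$ (smoothed if need be), so that no cutoff and no auxiliary $\bar\partial$-equation are needed at all.
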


We now introduce an appropriate quasi-metric which gives rise to a space of  homogeneous type on $D$. This metric can be defined using polydiscs introduced by McNeal (see \cite{Mc2}) and is defined locally at first on a neighborhood $U$ of a point $p \in bD$. Fix a point $w \in U$. First, we may by a unitary rotation (plus a normalization) and translation assume $\partial \rho(w)=d z_1$ and $w=0$. Then, define holomorphic coordinates $\zeta= (\zeta_1,\dots,\zeta_n)$ as follows:
$$\zeta_1=z_1+\frac{1}{2}\sum_{j,k=1}^{n}\frac{\partial^2 \rho(w)}{\partial z_j \partial z_k}(z_j)(z_k), \hspace{0.2 cm} \zeta_j=z_j, j=2,\dots n  .$$ Note if $\Phi: U \rightarrow \Phi(U)$ denotes this coordinate map, $\Phi$ is a biholomorphism if $U$ is chosen small enough.

Consider the polydisc:
$$P(w,\delta)=\{z: |z_1|<\delta, |z_j|<\delta^{1/2}, 2 \leq j \leq n\},$$ 
where again $z_j$ denotes the special holomorphic coordinates centered at $w.$ 

These polydiscs satisfy certain types of doubling properties (see \cite{Mc1}). We include a proof for completeness.
\begin{proposition}
There exist independent constants $C_1,C_2$ so the following hold for the polydiscs:
\begin{enumerate}
\item If $P(q_1,\delta)\cap P(q_2, \delta) \neq \emptyset$, then $P(q_1,\delta) \subset C_1 P(q_2,\delta)$ and $P(q_2,\delta)\subset C_1 P(q_1,\delta)$.

\item There holds $P(q_1,2 \delta) \subset C_2 P(q_1, \delta).$
\end{enumerate}

\begin{proof}
The second property is essentially immediate from the definition of $P$, so we focus on the first property. Suppose $P(q_1,\delta)\cap P(q_2, \delta) \neq \emptyset$. Let $z_1,\dots,z_n$ denote the holomorphic coordinates centered at $q_1$ and $\zeta_1,\dots,\zeta_n$ denote the holomorphic coordinates centered at $q_2.$ The general idea is that these holomorphic coordinates do not differ greatly. We need to take an arbitrary point $p \in P(q_1,\delta)$ and show there exists a constant $C_1$ so $p \in C_1P(q_2,\delta)$. Let $r \in  P(q_1,\delta)\cap P(q_2, \delta)$. Write the coordinates of $p$ relative to the coordinate system of the second polydisc as $(\zeta_1(p),\dots,\zeta_n(p))$. First observe that the definition of the polydiscs implies $$|p-q_2| \leq |p-r|+|r-q_2| \lesssim \delta^{1/2}$$ and the same bound holds for the quantities $|q_1-q_2|$ and $|p-q_1|$. Then we have 
\begin{eqnarray*}
|\zeta_1(p)| & \approx & \left| \sum_{j=1}^{n} \frac{\partial \rho}{\partial z_j }(q_2)(p_j-q_{2,j})\right|+\mathcal{O}(|p-q_2|^2)\\
& \lesssim & |z_1(p)|+ \left| \sum_{j=1}^{n} \frac{\partial \rho}{\partial z_j }(q_2)(p_j-q_{2,j})-\sum_{j=1}^{n} \frac{\partial \rho}{\partial z_j }(q_1)(p_j-q_{1,j})\right|+\delta\\
& \lesssim & \delta + \left|\langle \partial \rho(q_2)-\partial \rho(q_1), p-q_2 \rangle \right|+ \left|\langle \partial \rho(q_1), q_2-q_1\rangle \right|\\
& \lesssim & \delta+|q_2-q_1||p-q_2|+ \left|\langle \partial \rho(q_1), q_2-q_1\rangle\right|\\
& \lesssim & \delta+ \left|\langle \partial \rho(q_1), q_2-q_1\rangle\right|.
\end{eqnarray*}

We control $\left|\langle \partial \rho(q_1), q_2-q_1\rangle\right|$ as follows:
\begin{eqnarray*}
\left|\langle \partial \rho(q_1), q_2-q_1\rangle\right| & \leq & \left| \langle \partial \rho(q_1),r-q_1 \rangle\right|+ \left|\langle \partial \rho(q_1), q_2-r\rangle\right|\\
& \leq & z_1(r)+ \left| \langle \partial \rho(q_1)-\partial \rho(q_2), q_2-r \rangle \right| + \left| \langle \partial  \rho(q_2), r-q_2 \rangle \right| \\
& \lesssim & \delta+|q_1-q_2||q_2-r|+\zeta_1(r)\\
& \lesssim & \delta.
\end{eqnarray*}

It is easy to verify all the implicit constants are independent of $q_1,q_2$. So there exists a constant $C_1$ so $|\zeta_1(p)| < C_1 \delta$. 

On the other hand, for $2 \leq j \leq n$, we have 

$$|\zeta_j(p)| \lesssim |p-q_2| \lesssim \delta^{1/2},$$ so if $C_1$ is chosen appropriately large, then $|\zeta_j(p)|< C_1 \delta^{1/2}$. Then $p \in C_1 P(q_1, \delta)$, as we sought to show.

The other conclusion is immediate by symmetry. This completes the proof. 
\end{proof}
\end{proposition}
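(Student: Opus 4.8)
The plan is to dispatch the second property by inspection and to reduce the first to a quantitative comparison of the two special coordinate systems centered at $q_1$ and $q_2$. For the second property, in the coordinates centered at $q_1$ the defining inequalities of $P(q_1,2\delta)$ are $|z_1|<2\delta$ and $|z_j|<(2\delta)^{1/2}=\sqrt2\,\delta^{1/2}$, and since $\sqrt2<2$ these are both implied by the defining inequalities $|z_1|<2\delta$, $|z_j|<2\,\delta^{1/2}$ of the dilate $2\,P(q_1,\delta)$; thus $C_2=2$ works.

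For the first property, fix $r\in P(q_1,\delta)\cap P(q_2,\delta)$ and an arbitrary $p\in P(q_1,\delta)$; by symmetry it suffices to produce a uniform $C_1$ with $p\in C_1 P(q_2,\delta)$. The first observation is that $p,q_1,q_2,r$ all lie within Euclidean distance $\lesssim\delta^{1/2}$ of one another: the largest coordinate window defining a polydisc has size $\delta^{1/2}$, and on a fixed small neighborhood $U$ the holomorphic coordinate maps $\Phi$ are uniformly bi-Lipschitz with respect to the Euclidean metric, which uses compactness of $bD$ and $\nabla\rho\neq0$. Writing $z_1,\dots,z_n$ for the $q_1$-coordinates and $\zeta_1,\dots,\zeta_n$ for the $q_2$-coordinates, the "tangential" indices are easy: for $2\le j\le n$ one has $\zeta_j(p)=(p-q_2)_j$ up to a fixed unitary change, so $|\zeta_j(p)|\lesssim|p-q_2|\lesssim\delta^{1/2}$, which is the required bound after enlarging $C_1$.

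The heart of the matter is the "complex normal" coordinate $\zeta_1(p)$. Expanding its definition gives $\zeta_1(p)=\langle\partial\rho(q_2),p-q_2\rangle+O(|p-q_2|^2)=\langle\partial\rho(q_2),p-q_2\rangle+O(\delta)$, so it is enough to bound $|\langle\partial\rho(q_2),p-q_2\rangle|$ by $C\delta$. I would do this by inserting the pivot points $q_1$ and $r$ and exploiting that $\partial\rho$ is Lipschitz (here $C^2$ already suffices): splitting $p-q_2=(p-q_1)+(q_1-q_2)$,
\[
\langle\partial\rho(q_2),\,p-q_2\rangle=\langle\partial\rho(q_2)-\partial\rho(q_1),\,p-q_2\rangle+\langle\partial\rho(q_1),\,p-q_1\rangle+\langle\partial\rho(q_1),\,q_1-q_2\rangle,
\]
where the first term is $\lesssim|q_1-q_2|\,|p-q_2|\lesssim\delta$ and the second is $|z_1(p)|+O(\delta)\lesssim\delta$ since $p\in P(q_1,\delta)$. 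For the last term I would split $q_1-q_2=(q_1-r)+(r-q_2)$: the piece $\langle\partial\rho(q_1),q_1-r\rangle=z_1(r)+O(\delta)$ is controlled because $r\in P(q_1,\delta)$, and $\langle\partial\rho(q_1),r-q_2\rangle=\langle\partial\rho(q_1)-\partial\rho(q_2),r-q_2\rangle+\langle\partial\rho(q_2),r-q_2\rangle$, whose first piece is $\lesssim|q_1-q_2|\,|r-q_2|\lesssim\delta$ and whose second piece is $\zeta_1(r)+O(\delta)\lesssim\delta$ because $r\in P(q_2,\delta)$.

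Assembling these estimates gives $|\zeta_1(p)|\lesssim\delta$ with a constant independent of $q_1,q_2,\delta$, and together with the tangential bounds this yields $p\in C_1 P(q_2,\delta)$; the reverse inclusion is identical with the roles of $q_1$ and $q_2$ swapped. I expect the only real obstacle to be bookkeeping: ensuring that every implied constant — in the bi-Lipschitz equivalence of $\Phi$ with the identity on $U$, in the Lipschitz bound for $\partial\rho$, and in the $O(|\cdot|^2)$ remainders of the coordinate expansions — is uniform over $bD$, which is precisely where compactness of the boundary and the smoothness hypothesis on $\rho$ enter.
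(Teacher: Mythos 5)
Your proposal is correct and follows essentially the same route as the paper: dispatch the second property by direct inspection, and for the first property reduce $|\zeta_1(p)|$ to $|\langle\partial\rho(q_2),p-q_2\rangle|$ up to $O(\delta)$, then decompose by pivoting through $q_1$ and the common point $r$, using that $\partial\rho$ is Lipschitz and that membership in $P(q_1,\delta)$ and $P(q_2,\delta)$ controls the resulting ``normal'' terms. The algebraic splitting is written in a slightly different order than in the paper, but the intermediate quantities and bounds are the same.
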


As a consequence of these covering properties, one can now introduce a local quasi-metric $M$ on $U$:

\begin{definition} Define the following function on $U \times U$:
$$M(z,w)= \inf_{\varepsilon>0}\{\varepsilon: w \in P(z, \varepsilon)\}.$$
Then $M$ defines a quasi-metric on $U$. The argument is essentially the same as the other cases considered in \cite{Mc1}.
\end{definition}

It is also routine to verify that $M(z,w)$ is comparable to the following metric quantity:

$$M(z,w) \approx |z_1-w_1|+\sum_{j=2}^{n}|z_j-w_j|^2$$

\noindent where again the components of $z$ and $w$ are computed in the special coordinates centered at $w$.

It is possible to patch together these local quasi-metrics together to obtain a global quasi-metric $d(z,w)$ that is comparable to each local piece (again the argument is essentially contained in \cite{Mc1}). Technically, this metric is only defined on a tubular neighborhood of the boundary, but this presents us with no issues and we abuse notation by writing it to be defined on $D$ (see, for example \cite{HWW}). 

It follows that $(D,d,\mathop{dV})$ is a space of homogeneous type in the sense of Coifman and Weiss, where $\mathop{dV}$ denotes Lebesgue measure on $D$. We may symmetrize $d$ by taking replacing it with $d(z,w)+d(w,z)$ and assume $d(z,w)=d(w,z)$. It is also a fact that $V(B(z,r))\approx r^{n+1},$ where $B(z,r)=\{w \in D: d(w,z)<r\}$ (note the biholomorphism is measure-preserving, see also \cite{HWW, HWW2}). Moreover, we can define the distance to the boundary in this metric:

$$d(z,bD):= \inf_{w \in bD}d(z,w).$$

\noindent It is verified in \cite{HWW} that this quantity is comparable to the Euclidean distance to the boundary.

We have the following relation between the quasi-metric $d$ and the Euclidean distance:

\begin{proposition}\label{euclidean bounds}
We have, for $z',z \in D$:
$$|z-z'|^2 \lesssim d(z,z') \lesssim |z-z'|.$$

\begin{proof}
It suffices to work locally, so we may assume $d$ coincides with one of the local quasi-metrics on a neighborhood $U$. Let $\Phi(z)=\zeta(z)=(\zeta_1,\dots,\zeta_n)$ denote the biholomorphic coordinate change described in detail above in the construction of $d$. Because the coordinate change is biholomorphic, we have the following bounds:
\begin{eqnarray*}
|z-z'|^2 & = & \sum_{j=1}^{n} |z_j-z'_j|^2\\
& \lesssim & \sum_{j=1}^{n} |\zeta_j-\zeta'_j|^2\\
& \leq & d(z,z').
\end{eqnarray*}
The proof of the upper bound is similar.
\end{proof}

\end{proposition}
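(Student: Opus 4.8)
The plan is to reduce everything to a single coordinate patch and then to invoke two facts already in hand: that the coordinate map $\Phi$ is a biholomorphism of a neighborhood $U$ of a boundary point onto its image --- hence bi-Lipschitz on $U$, with constants uniform over a finite cover of $bD$ by compactness --- and that locally $d(z,z')$ is comparable to the anisotropic quantity $|\zeta_1(z)-\zeta_1(z')| + \sum_{j=2}^{n}|\zeta_j(z)-\zeta_j(z')|^2$, where $\zeta=\Phi$ denotes the special holomorphic coordinates centered at $z'$. First I would observe that it suffices to prove both inequalities when $|z-z'|$ is smaller than a fixed constant $r_0$: if not, then since $D$ is bounded both $d(z,z')$ and $|z-z'|$ are bounded above and below by positive constants and the comparison is automatic; and when $|z-z'|<r_0$ the two points lie in a common patch $U$ on which $d(z,z') \approx M(z,z') \approx |\zeta_1(z)-\zeta_1(z')| + \sum_{j=2}^{n}|\zeta_j(z)-\zeta_j(z')|^2$. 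Normalizing so that $\zeta(z')=0$, this reads $d(z,z') \approx |\zeta_1(z)| + \sum_{j=2}^{n}|\zeta_j(z)|^2$.

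For the lower bound $|z-z'|^2 \lesssim d(z,z')$ I would use bi-Lipschitzness of $\Phi$ to write $|z-z'|^2 \approx |\zeta(z)|^2 = |\zeta_1(z)|^2 + \sum_{j=2}^{n}|\zeta_j(z)|^2$; since $|\zeta_1(z)|$ is bounded we have $|\zeta_1(z)|^2 \lesssim |\zeta_1(z)|$, and the remaining sum is exactly the second piece of $d$, so $|z-z'|^2 \lesssim |\zeta_1(z)| + \sum_{j=2}^{n}|\zeta_j(z)|^2 \approx d(z,z')$. For the upper bound $d(z,z') \lesssim |z-z'|$, again $|\zeta_j(z)| \leq |\zeta(z)| \approx |z-z'|$ for every $j$, so $|\zeta_1(z)| \lesssim |z-z'|$ while $\sum_{j=2}^{n}|\zeta_j(z)|^2 \lesssim |z-z'|^2 \lesssim |z-z'|$; adding the two estimates yields $d(z,z') \lesssim |z-z'|$.

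I do not expect a genuine obstacle: once the reduction is made, the proposition is pure bookkeeping on top of the two structural facts above. The only steps deserving a sentence of care are the initial localization --- so that a local comparison with constants uniform over the finitely many patches covering a neighborhood of $bD$ upgrades to a global statement, keeping in mind that $d$ is really only defined near $bD$ and extended to all of $D$ by the abuse of notation already in force --- and the repeated use of boundedness of $D$ to replace a square by its first power. Everything else is immediate from the biholomorphic (hence bi-Lipschitz) nature of the coordinate change together with the local formula for $d$.
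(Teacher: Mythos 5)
Your proof is correct and takes essentially the same route as the paper's: reduce to a coordinate patch, use the bi-Lipschitz nature of the biholomorphic coordinate change $\Phi$, and compare $|\zeta|^2$ and $|\zeta|$ to the anisotropic expression $|\zeta_1| + \sum_{j\geq 2}|\zeta_j|^2 \approx d(z,z')$, invoking boundedness of $D$ to trade a square for a first power. You spell out the upper bound (which the paper dismisses as ``similar'') and the localization via a finite cover, but the ideas are identical.
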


It should also be noted that the metric $d$ extends to $\bar{D} \times \bar{D}$. We now show that when we restrict $d$ to $bD \times bD$, we obtain a quantity comparable in size to $|g(w,z)|$, which establishes a natural connection between the Szeg\H{o} and Bergman cases. 

\begin{proposition}\label{equivalent metric}
If $z,w \in bD$, then we have
$$d(z,w)\approx |g(w,z)|.$$
\begin{proof}

Let $z=(\zeta_1,\dots,\zeta_n)$ in the special holomorphic coordinates centered at $w$. Note 

$$d(z,w) \approx |\zeta_1|+\sum_{j=2}^{n}|\zeta_j|^2.$$ Also, we have by \cite[Proposition 2.1]{LS2},$$|g(w,z)| \approx |\text{Im} \langle \partial \rho(w), w-z \rangle|+|w-z|^2.$$ But notice that
$$|\langle \partial \rho(w), w-z \rangle| \lesssim |\zeta_1|+ |w-z|^2$$ and moreover

$$|w-z|^2 \lesssim \sum_{j=1}^{n}|\zeta_j|^2 \lesssim d(z,w)$$ since the coordinate change is biholomorphic. This shows $|g(w,z)| \lesssim d(z,w)$. To see the reverse, note that if $|z-w|$ is small enough, then $g(w,z)= P_w(z)$ and

$$|\zeta_1| \lesssim |P_w(z)|+|w-z|^2$$ which combined with the estimates above gives $d(z,w) \lesssim |g(w,z)|$.

\end{proof}

\end{proposition}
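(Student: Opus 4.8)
The strategy is to compare both sides to a single model expression built from the special holomorphic coordinates centered at $w$. Write the point $z$ as $(\zeta_1,\dots,\zeta_n)$ in the normalizing coordinates centered at $w$ (so $w$ is the origin and $\partial\rho(w)=dz_1$); by the description of the quasi-metric through McNeal's polydiscs one has $d(z,w)\approx|\zeta_1|+\sum_{j=2}^{n}|\zeta_j|^2$. The key algebraic fact is that in these coordinates the first component is \emph{exactly} the Levi polynomial: since $\tfrac{\partial\rho}{\partial w_1}(w)=1$ and $\tfrac{\partial\rho}{\partial w_j}(w)=0$ for $j\ge2$, the defining formula for $\zeta_1$ coincides term by term with that of $P_w(z)$, i.e.\ $\zeta_1=P_w(z)$. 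Moreover the normalizing change of coordinates depends smoothly on the base point and is therefore bi-Lipschitz uniformly in $w$, so $\sum_{j=1}^{n}|\zeta_j|^2\approx|z-w|^2$.

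First I would handle the range where $|z-w|$ is small enough that the cutoff $\chi$ in the definition of $g$ equals $1$ near $z$; call this range $|z-w|\le c_0$. There $g(w,z)=-\rho(w)-P_w(z)=-P_w(z)$ because $\rho(w)=0$, so $|g(w,z)|=|P_w(z)|=|\zeta_1|$. Hence immediately $|g(w,z)|=|\zeta_1|\le|\zeta_1|+\sum_{j\ge2}|\zeta_j|^2\approx d(z,w)$. For the reverse inequality it only remains to absorb the tangential term: the strict pseudoconvexity bound $\text{Re}\,g(w,z)\gtrsim-\rho(w)-\rho(z)+c|w-z|^2$ reads $\text{Re}\,g(w,z)\gtrsim|w-z|^2$ on $bD$, and combining this with $\sum_{j\ge2}|\zeta_j|^2\le\sum_{j}|\zeta_j|^2\approx|w-z|^2$ gives $\sum_{j\ge2}|\zeta_j|^2\lesssim|g(w,z)|$, whence $d(z,w)\approx|\zeta_1|+\sum_{j\ge2}|\zeta_j|^2\lesssim|g(w,z)|$. (One could equally invoke the size estimate $|g(w,z)|\approx|\text{Im}\langle\partial\rho(w),w-z\rangle|+|w-z|^2$ of \cite[Proposition 2.1]{LS2} to the same effect.)

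It then remains to treat $|z-w|>c_0$. On this range both quantities are harmless. The function $d(z,w)$ is continuous and strictly positive on the compact set $\{(z,w)\in bD\times bD:\ |z-w|\ge c_0\}$, since $d(z,w)=0$ forces $z=w$; and $|g(w,z)|$ is continuous and strictly positive there, since $\text{Re}\,g(w,z)\gtrsim|w-z|^2>0$ excludes $g(w,z)=0$. Two continuous strictly positive functions on a compact set are comparable, so $d(z,w)\approx|g(w,z)|$ there as well, which completes the proof.

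I do not expect a genuine obstacle: the whole argument hinges on the bookkeeping identity $\zeta_1=P_w(z)$ in the normalizing coordinates --- which makes $|g(w,z)|$ equal to the $|\zeta_1|$-part of the model metric near the diagonal --- together with the uniform bi-Lipschitz equivalence $\sum_j|\zeta_j|^2\approx|w-z|^2$ used for the tangential directions. If one step deserves attention it is that uniform equivalence, which uses the $C^4$ regularity of $\rho$ (so that the coordinate change depends smoothly on $w$) and the compactness of $bD$.
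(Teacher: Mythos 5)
Your proof is correct and is essentially the same argument as the paper's: compare both quantities to the model $|\zeta_1|+\sum_{j\ge2}|\zeta_j|^2$ in the McNeal coordinates centered at $w$, and use that the coordinate change is uniformly bi-Lipschitz together with the pseudoconvexity lower bound $\mathrm{Re}\,g(w,z)\gtrsim|w-z|^2$ on $bD$. The one genuine streamlining you introduce is the identification $\zeta_1=P_w(z)$ (up to the bounded normalization factor coming from $|\nabla\rho(w)|$, which affects nothing in the comparison): this makes $|g(w,z)|\approx|\zeta_1|$ near the diagonal immediate and hence gives the upper bound $|g(w,z)|\lesssim d(z,w)$ for free, bypassing the paper's detour through $|g(w,z)|\approx|\mathrm{Im}\langle\partial\rho(w),w-z\rangle|+|w-z|^2$ from \cite[Proposition 2.1]{LS2} and the intermediate estimate $|\langle\partial\rho(w),w-z\rangle|\lesssim|\zeta_1|+|w-z|^2$. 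You also spell out the off-diagonal regime $|z-w|>c_0$ via compactness, which the paper leaves implicit. The only small imprecision is phrasing $\zeta_1=P_w(z)$ as an exact equality; after the normalization in the construction of the coordinates it is an equality only up to a multiplicative factor bounded above and below on $bD$, but since you are proving a two-sided comparability this is harmless. No gaps.
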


We can now define a suitable class of $B_p$ weights on the domain $D$. Loosely speaking, this condition imposes that the product of the average of $\sigma$ and the average of $\sigma^\frac{-1}{p-1}$ is controlled on quasi-balls that touch the boundary of $D$ (or so-called Carleson tents). In what follows, let $\sigma$ be a locally integrable function that is positive almost everywhere. 

\begin{definition}
For $1<p<\infty$, we say the weight $\sigma$ belongs to the B\'{e}koll\`{e}-Bonami ($B_p$) class associated to the quasi-metric $d$ if the following quantity is finite:

$$[\sigma]_{B_p}:= \sup_{B(w,R); R>d(w,bD)} \left(\frac{1}{V(B(w,R))}\int_{B(w,R)} \sigma \mathop{dV}\right)\left(\frac{1}{V(B(w,R))}\int_{B(w,R)} \sigma^{-1/(p-1)} \mathop{dV}\right)^{p-1}.$$

\end{definition}

We can also define an associated maximal function:

\begin{definition}\label{maximal}
For  $z \in D$ and $f \in L^{1}(D)$, define the following maximal function:
$$\mathcal{M}f(z):=\sup_{B(w,R)\ni z; R>d(w,bD)}\frac{1}{V(B(w,R))} \int_{B(w,R)}|f| \mathop{dV}.$$
\end{definition} 
\noindent It is proven in \cite{HWW} that $\mathcal{M}$ is bounded on $L^p_\sigma(D)$ for $\sigma \in B_p$.

Moreover, we can define a suitable class of $B_1$ weights (again $\sigma$ is a locally integrable function on $D$ that is positive almost everywhere). 

\begin{definition} We say the weight $\sigma$ belongs to the class $B_1$ if for all $z \in D$, 
$$\mathcal{M}(\sigma)(z) \lesssim \sigma(z).$$
\end{definition}

\subsection{The Main Term}

We follow the following general outline to prove Theorem \ref{bergman main detailed}. First, we obtain size and smoothness estimates for $K_1(z,w)$, the kernel of $\mathcal{T}_1$. This enables us to prove that $\mathcal{T}$ maps $L^p_\sigma(D)$ to $L^p_\sigma(D)$. We then proceed to show that $\mathcal{T}^*-\mathcal{T}$ is compact on $L^2_\sigma(D)$ and improves $L^p$ spaces. These properties allow us to use the Kerzman-Stein equation to extract the $L^p_\sigma(D)$ boundedness of $\mathcal{B}$ from the $L^p_\sigma(D)$ boundedness of $\mathcal{T}$. 

The following proposition follows immediately from the fact that $\mathcal{T}_2$ has a bounded kernel and $D$ is a bounded domain.

\begin{proposition}\label{bounded kernel weights}
For $\sigma \in B_p$, the operator $\mathcal{T}_2$ maps $L^p_\sigma(D)$ to $L^p_\sigma(D)$ boundedly, $1<p<\infty$.

\begin{proof}
Take $f \in L^p_\sigma(D)$. Then we have
\begin{eqnarray*}
||\mathcal{T}_2(f)(z)||_{L^p_\sigma(D)}^p & = & \int_{D} \left| \int_{D} K_2(z,w) f(w) \mathop{d V(w)}\right|^p \sigma(z) \mathop{d V(z)}\\
& \lesssim & \left( \int_{D} |f(w)| \mathop{d V(w)}\right)^p \left(\int_{D}  \sigma(z) \mathop{d V(z)}\right)\\
& \leq & ||f||^p_{L^p_\sigma(D)} \left(\int_{D} \sigma(z) \mathop{d V(z)}\right)\left(\int_{D} \sigma(w)^{-\frac{1}{p-1}} \mathop{d V(w)}\right)^{p-1}\\
& \leq  & [\sigma]_{B_p}||f||^p_{L^p_\sigma(D)}.
\end{eqnarray*}

\end{proof}

\end{proposition}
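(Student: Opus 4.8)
The plan is to combine the boundedness of the kernel $K_2$ with the fact that, under the $B_p$ hypothesis, both $\sigma$ and its dual weight $\sigma^{-1/(p-1)}$ are globally integrable on $D$. First I would observe that since $D$ is bounded, Proposition \ref{euclidean bounds} forces $D$ to have finite diameter in the quasi-metric $d$; hence for $R$ larger than this diameter and any $w \in D$ one has $B(w,R) = D$ (quasi-balls being subsets of $D$ by definition), and such an $R$ automatically exceeds $d(w,bD)$. Applying the definition of $[\sigma]_{B_p}$ to this ball yields
$$\left(\int_D \sigma \mathop{dV}\right)\left(\int_D \sigma^{-1/(p-1)}\mathop{dV}\right)^{p-1} \leq [\sigma]_{B_p}\, V(D)^p < \infty,$$
so in particular $\sigma, \sigma^{-1/(p-1)} \in L^1(D)$.

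Next, writing $M := \sup_{\bar{D}\times\bar{D}} |K_2|$ and applying H\"older's inequality with exponents $p$ and $p' = p/(p-1)$ (using $p'/p = 1/(p-1)$), for every $z \in D$ I would bound
$$|\mathcal{T}_2 f(z)| \leq M \int_D |f(w)|\,\sigma(w)^{1/p}\,\sigma(w)^{-1/p}\mathop{dV(w)} \leq M\, \|f\|_{L^p_\sigma(D)}\left(\int_D \sigma^{-1/(p-1)}\mathop{dV}\right)^{1/p'},$$
a bound uniform in $z$.

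Finally I would raise this estimate to the $p$-th power, integrate against $\sigma\,dV$ over $D$, and use $p/p' = p-1$ to get
$$\|\mathcal{T}_2 f\|_{L^p_\sigma(D)}^p \leq M^p\, \|f\|_{L^p_\sigma(D)}^p \left(\int_D \sigma^{-1/(p-1)}\mathop{dV}\right)^{p-1}\left(\int_D \sigma\mathop{dV}\right) \leq M^p\,[\sigma]_{B_p}\, V(D)^p\,\|f\|_{L^p_\sigma(D)}^p,$$
which is the claimed boundedness. There is no genuine obstacle here; the only point requiring any attention is the very first step, namely recognizing that $D$ itself is an admissible ball in the supremum defining $[\sigma]_{B_p}$, so that the $B_p$ condition collapses to simultaneous integrability of $\sigma$ and its dual weight. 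Everything afterward is a one-line H\"older argument exploiting the uniformly bounded kernel and the finite volume of $D$.
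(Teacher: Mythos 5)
Your proof is correct and follows essentially the same route as the paper: exploit the uniform bound on $K_2$, apply H\"older with exponents $p,p'$ to pass from $\int_D|f|\,dV$ to $\|f\|_{L^p_\sigma}$ times $\left(\int_D\sigma^{-1/(p-1)}\,dV\right)^{(p-1)/p}$, and recognize that the product $\left(\int_D\sigma\,dV\right)\left(\int_D\sigma^{-1/(p-1)}\,dV\right)^{p-1}$ is controlled by $[\sigma]_{B_p}$ (times $V(D)^p$). The one thing you do more explicitly than the paper is justify integrability of $\sigma$ and $\sigma^{-1/(p-1)}$ by observing that $D$ itself is an admissible ball in the $B_p$ supremum once one checks that $D$ has finite quasi-metric diameter; the paper uses this fact implicitly in its final inequality.
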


We now work to prove the following theorem:

\begin{theorem}\label{aux operator} For $\sigma \in B_p$, the operator $\mathcal{T}$, as well as its adjoint $\mathcal{T}^*$, map  $L^p_\sigma(D)$ to $L^p_\sigma(D)$ boundedly, $1<p<\infty$.
\end{theorem}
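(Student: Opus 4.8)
The proof proceeds by establishing that $\mathcal{T}$ is a Calder\'{o}n--Zygmund type operator with respect to the quasi-metric $d$ on the space of homogeneous type $(D, d, \mathop{dV})$, and then invoking the weighted theory in that setting. By Proposition~\ref{bounded kernel weights}, the correction term $\mathcal{T}_2$ already maps $L^p_\sigma(D)$ to itself since its kernel is bounded and $D$ is bounded. So the real work is with $\mathcal{T}_1$, whose kernel is $K_1(w,z) = \mathcal{N}(w,z)/(g(w,z))^{n+1}$ by \eqref{12}--\eqref{13}. The first and most substantial step is to prove the size estimate
$$|K_1(z,w)| \lesssim \frac{1}{V(B(z, d(z,w)))} \approx d(z,w)^{-(n+1)}$$
and the Hörmander-type smoothness estimates
$$|K_1(z,w) - K_1(z,w')| \lesssim \frac{d(w,w')^{\epsilon}}{d(z,w)^{n+1+\epsilon}}, \qquad |K_1(z,w) - K_1(z',w)| \lesssim \frac{d(z,z')^{\epsilon}}{d(z,w)^{n+1+\epsilon}}$$
for $d(z,w) \geq c\, d(w,w')$ (resp.\ $d(z,w)\geq c\,d(z',z)$), valid for some $\epsilon > 0$ (most naturally $\epsilon = 1$ in the first component where $\mathcal{N}$ is $C^1$, and some smaller exponent in the $z$-variable reflecting the Hölder regularity of the coordinate patching). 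The key analytic input here is the lower bound $\operatorname{Re} g(w,z) \gtrsim -\rho(w) - \rho(z) + c|w-z|^2$, which gives $|g(w,z)| \gtrsim d(z,w)$ on the boundary and more generally controls $|g|$ from below by the quasi-metric plus the boundary distances; this is exactly the McNeal-type estimate, and Proposition~\ref{equivalent metric} together with Proposition~\ref{euclidean bounds} supplies the bridge between $|g(w,z)|$ and $d(z,w)$. Differentiating $g^{-(n+1)}$ and using that $\mathcal{N}$ is $C^1$ in $w$ (which requires $D \in C^4$, as remarked after \eqref{13}) yields the smoothness estimates; the factor $d(z,w)^{-(n+1)}$ matches $V(B(z,r))\approx r^{n+1}$.

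\textbf{From kernel estimates to boundedness.} Once the Calder\'{o}n--Zygmund estimates are in hand, I would establish $L^2$ boundedness of $\mathcal{T}_1$ (hence of $\mathcal{T}$) and then apply the weighted Calder\'{o}n--Zygmund theory on spaces of homogeneous type to conclude boundedness on $L^p_\sigma(D)$ for $\sigma \in B_p$. For the $L^2$ bound, rather than running a $T(1)$ argument one can exploit that $\mathcal{T}$ reproduces holomorphic functions (Propositions~\ref{reproduce interior} and \ref{correction bergman}) together with a Schur test against the weight $d(z,bD)^{-\beta}$ for a suitable exponent, which is the classical B\'{e}koll\`{e}--Bonami/McNeal approach; alternatively one cites the unweighted result of Lanzani--Stein directly. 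The weighted estimate then follows because the $B_p$ condition is precisely the $A_p$ condition for the restricted class of balls $B(w,R)$ with $R > d(w,bD)$ (Carleson tents), and the kernel $K_1$ only ``sees'' the boundary geometry: away from the boundary $K_1$ is bounded, so the operator splits into a bounded-kernel piece (handled as in Proposition~\ref{bounded kernel weights}) and a genuinely singular piece localized near $bD$ to which the tent-based maximal function $\mathcal{M}$ — shown bounded on $L^p_\sigma(D)$ in \cite{HWW} — applies. This is the same mechanism used in \cite{HWW, HWW2}, and the point is that the sparse/good-$\lambda$ machinery of Calder\'{o}n--Zygmund theory goes through verbatim once one restricts attention to the tent balls.

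\textbf{The adjoint.} For $\mathcal{T}^*$, the cleanest route is symmetry: the size and smoothness estimates for $K_1(z,w)$ are \emph{not} symmetric in $z$ and $w$ a priori (the kernel is $C^1$ in $w$ but the $z$-dependence passes through a $C^1$-in-$z$ family and the coordinate patching), but one can derive the corresponding estimates for $K_1(w,z)$ in the swapped roles by noting (as in the Szeg\H{o} case, cf.\ Proposition~\ref{diff of Levi} and Lemma~\ref{kernel difference}) that $g(w,z)$ and $\overline{g(z,w)}$ differ by a lower-order term, so the adjoint kernel $\overline{K_1(w,z)}$ satisfies the same Calder\'{o}n--Zygmund bounds up to an error kernel of improved size $\lesssim d(z,w)^{-(n+1) + 1}$, which is integrable and hence harmless (indeed gives a bounded operator on all $L^p_\sigma$ directly). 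Thus $\mathcal{T}^* = \mathcal{T}^\flat + (\text{error})$ where $\mathcal{T}^\flat$ is again a weighted-CZ operator. Then $\mathcal{T}^* $ is bounded on $L^p_\sigma(D)$ by the same argument.

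\textbf{Main obstacle.} The hard part is the kernel estimates — specifically, proving the lower bound for $|g(w,z)|$ in terms of $d(z,w) + d(z,bD) + d(w,bD)$ uniformly, and then the smoothness estimates, since $g$ is only $C^1$ (its second-derivative ingredients are $C^2$ at best when $\rho \in C^4$) and one must carefully track how the $C^1$ regularity of $\mathcal{N}$ in $w$ — which is where the $C^4$ hypothesis on $\partial D$ is consumed — interacts with the differentiation of $g^{-(n+1)}$. Controlling the $z$-variable smoothness is subtler because the biholomorphic coordinate change used to define $d$ varies with the base point, so one only obtains a fractional Hölder exponent there; fortunately the Calder\'{o}n--Zygmund theory on spaces of homogeneous type tolerates any positive Hölder exponent, so this costs nothing in the end. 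Everything downstream — the $L^2$ bound, the passage to weighted $L^p$, and the adjoint — is then routine given the machinery already assembled in Sections~\ref{szego section} and the references \cite{HWW, HWW2, Mc1, Mc2}.
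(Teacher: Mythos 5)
Your proposal is in the right spirit but takes a noticeably more laborious route than the paper, and at one point hides real work behind a ``goes through verbatim'' handwave.

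The paper does not prove Calder\'on--Zygmund estimates for the oscillatory kernel $K_1(z,w)=\mathcal{N}(w,z)/g(w,z)^{n+1}$ and does not run a $T(1)$ (or Schur) argument for $\mathcal{T}_1$ itself. Instead it \emph{pointwise dominates} $\mathcal{T}_1$ by the positive comparison operator
$\Gamma(f)(z)=\int_D |g(w,z)|^{-(n+1)}f(w)\,dV(w)$,
so that $|\mathcal{T}_1 f|\lesssim \Gamma(|f|)$. It then proves size and H\"older-$\tfrac12$ smoothness estimates for the positive kernel $|g(w,z)|^{-(n+1)}$ (Lemma~\ref{size and smoothness}) and cites the companion result \cite[Theorem 1.2]{HWW} that any positive integral operator with such estimates is bounded on $L^p_\sigma(D)$ for $\sigma\in B_p$. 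This one move collapses the two hardest pieces of your plan: (i) there is no separate $L^2$ boundedness step, because the weighted result for the positive operator is what is being cited; and (ii) the adjoint is free, since $|g(w,z)|\approx|g(z,w)|$ makes $\Gamma$ essentially symmetric, so $|\mathcal{T}_1^* f|\lesssim\Gamma^*(|f|)\approx\Gamma(|f|)$ and the same estimates apply. Your proposed Kerzman--Stein-type decomposition of $\mathcal{T}^*$ into a transposed main part plus an ``improved-size'' error is not needed for boundedness (the paper reserves that maneuver for the compactness of $\mathcal{T}^*-\mathcal{T}$ in Lemma~\ref{smoothing kernel size}).

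The one genuine soft spot in your plan is the appeal to ``weighted Calder\'on--Zygmund theory on spaces of homogeneous type.'' Off-the-shelf that theory delivers $A_p$ (all quasi-balls), not the weaker $B_p$ (only tent balls $B(w,R)$ with $R>d(w,bD)$). Passing from the one to the other is exactly the content of \cite[Theorem 1.2]{HWW}, which is proved there for positive operators with the extra boundary-distance decay $|K|\lesssim d(z,bD)^{-(n+1)}$; for a genuinely singular operator this adaptation is not a verbatim replay of the sparse/good-$\lambda$ machinery, and you would still need an $L^2$ input for $\mathcal{T}_1$ itself. The paper's domination trick is precisely what lets one avoid re-deriving that machinery for the oscillatory kernel. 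So: your ingredients are the right ones, and your kernel estimates (with $\epsilon=\tfrac12$, and with the boundary-distance refinement you mention in passing) match the paper's Lemma~\ref{size and smoothness}; but the cleanest path to the conclusion, and the one the paper takes, is to drop absolute values in the numerator and reduce everything to the positive operator $\Gamma$.
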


In light of the previous proposition, which clearly also works for $\mathcal{T}_2^*$, it is sufficient to show that $\mathcal{T}_1$ and $\mathcal{T}_1^*$ are bounded on $L^p_\sigma(D)$. To this end, we define the following comparison operator:

$$\Gamma(f)(z)= \int_{D} \frac{1}{|g(w,z)|^{n+1}} f(w) \mathop{d V(w)}.$$ Note that in light of \eqref{12}, we have the pointwise domination:
$$|\mathcal{T}_1(f)(z)| \lesssim \Gamma(|f|)(z).$$ To prove the weighted $L^p$ regularity of $\Gamma$, we follow B\'{e}koll\`{e}'s approach of using singular integral theory that was also undertaken in \cite{HWW}. In particular, we obtain the following size and smoothness estimates on the kernel of $\Gamma$:

\begin{lemma}\label{size and smoothness} The following hold:
\begin{enumerate}
\item $$\frac{1}{|g(w,z)|^{n+1}} \lesssim \min\left\{\frac{1}{V(B(z,d(z,bD)))},\frac{1}{V(B(w,d(w,bD)))}\right\}.$$
\item If $d(z,w)\geq c d(z,z')$ for an appropriately chosen constant $c$, then 
$$\left|\frac{1}{(g(w,z))^{n+1}}-\frac{1}{(g(w,z'))^{n+1}}\right| \lesssim \left(\frac{d(z,z')}{d(z,w)}\right)^{1/2}\frac{1}{V(B(z,d(z,w)))}.$$

\item If $d(z,w)\geq c d(w,w')$ for an appropriately chosen constant $c$, then 
$$\left|\frac{1}{(g(w,z))^{n+1}}-\frac{1}{(g(w',z))^{n+1}}\right| \lesssim \left(\frac{d(w,w')}{d(z,w)}\right)^{1/2}\frac{1}{V(B(w,d(z,w)))}.$$

\end{enumerate}

\begin{proof}

For the first statement, it suffices to prove $$\frac{1}{|g(w,z)|^{n+1}} \lesssim \frac{1}{V(B(z,d(z,bD)))},$$ since $|g(w,z)| \approx |g(z,w)|$ by \cite[Proposition 2.1]{LS2}. Since  
$V(B(z,d(z,bD))) \approx [d(z,bD)]^{n+1}$, it is enough to show $d(z,bD) \lesssim |g(w,z)|$. We have $d(z,bD) \approx \operatorname{dist}(z,bD) \approx |\rho(z)|$, where $\operatorname{dist}$ denotes Euclidean distance. On the other hand, $|g(w,z)| \gtrsim |\rho(z)|$ by \cite[Proposition 2.1]{LS2}). This proves the size estimate. 

For the smoothness estimate, we first prove as a preliminary fact that $d(z,w) \lesssim |g(w,z)|$. We may assume $|w-z|$ is small enough so that $g(w,z)=-\rho(w)-P_w(z)$. By definition we have

$$d(z,w) \approx |\zeta_1|+ \sum_{j=2}^{n} |\zeta_j|^2$$ where $\zeta_1,\dots,\zeta_n$ are the components of $z$ in the holomorphic coordinates centered at $w$. Using the triangle inequality and the definition of the biholomorphic coordinates, we obtain

$$|\zeta_1| \lesssim \left|\sum_{j=1}^{n} \frac{\partial \rho (w)}{\partial z_j}(z_j-w_j)\right|+\mathcal{O}(|z-w|^2) \lesssim  |\rho(w)|+|-\rho(w)-P_w(z)|+\mathcal{O}(|z-w|^2).$$

Then, appeal to the fact that $|g(w,z)| \gtrsim |\rho(w)|+|w-z|^2$ by \cite[Proposition 2.1]{LS2} and the fact that the coordinate change is biholomorphic to obtain the desired conclusion.

We only prove the first smoothness estimate; the second one is proven similarly and is only slightly more complicated. We use similar ideas as in \cite{LS1}. We first prove the estimate

$$|g(w,z)-g(w,z')|\lesssim d(z,z')^{1/2}d(z,w)^{1,2}+d(z,z').$$

To begin with, note that we have

\begin{eqnarray*}
|g(w,z)-g(w,z')| & \leq & \left| \langle \partial \rho(w),w-z \rangle- \langle \partial \rho(w),w-z' \rangle \right|\\
 & + & \frac{1}{2} \left|\sum_{j,k=1}^{n}\frac{\partial^2 \rho(w)}{\partial w_j \partial w_k}\left[(w_j-z_j)(w_k-z_k)-(w_j-z'_j)(w_k-z'_k)\right]\right|.
\end{eqnarray*}

We deal with the first term, $\left| \langle \partial \rho(w),w-z \rangle- \langle \partial \rho(w),w-z' \rangle \right|=\left| \langle \partial \rho(w),z'-z \rangle \right|.$ We then have, using Proposition \ref{euclidean bounds}:
\begin{eqnarray*}
\left| \langle \partial \rho(w),z'-z \rangle \right| & \leq & \left| \langle \partial \rho(z),z'-z \rangle \right| + \left| \langle \partial \rho(w)-\partial\rho(z),z'-z \rangle \right| \\
& \lesssim & d(z,z')+|z-w||z-z'|\\
& \lesssim & d(z,z')+d(z,w)^{1/2}d(z,z')^{1/2}.
\end{eqnarray*}

Now we handle the second term.  Notice that we have
\begin{small}
\begin{eqnarray*}
|(w_j-z_j)(w_k-z_k)-(w_j-z'_j)(w_k-z'_k)|
& \leq & |(w_j-z_j)(w_k-z_k)-(w_j-z'_j)(w_k-z_k)|\\
& + &  |(w_j-z'_j)(w_k-z_k)-(w_j-z'_j)(w_k-z'_k)|\\
 & \leq & |w_k-z_k||z_j-z'_j|+|w_j-z'_j||z_k-z'_k|\\
& \leq & |w-z||z-z'|+(|w-z|+|z-z'|)|z-z'|\\
& \lesssim & d(z,w)^{1/2}d(z,z')^{1/2}+(d(z,w)^{1/2}+d(z,z')^{1/2})d(z,z')^{1/2}\\ 
& \lesssim & d(z,w)^{1/2}d(z,z')^{1/2}
\end{eqnarray*}
\end{small}
which proves the required bound for the second piece. 

Now, we show $|g(w,z)|\approx |g(w,z')|$ if $d(z,w) \geq c d(z,z')$. We estimate, using the work previously done:
\begin{eqnarray*}
|g(w,z)| & \leq &  |g(w,z)|+|g(w,z')-g(w,z)|\\
& \lesssim & |g(w,z')|+ d(z,w)^{1/2}d(z,z')^{1/2}+ d(z,z')\\
& \lesssim & |g(w,z')|+(c^{-1/2}+c^{-1})d(z,w)\\
& \lesssim & |g(w,z')|+ (c^{-1/2}+c^{-1})|g(w,z)|.
\end{eqnarray*}

Thus, if $c$ is chosen appropriately large, we can subtract the $|g(w,z)|$ term to the other side and obtain $|g(w,z)| \lesssim |g(w,z')|$. The bound $|g(w,z')| \lesssim |g(w,z)|$ is obtained similarly. 

Finally, we obtain, using our assumption $d(z,w)\geq c d(z,z')$:

\begin{eqnarray*} 
\left|\frac{1}{(g(w,z))^{n+1}}-\frac{1}{(g(w,z'))^{n+1}}\right| & \leq & \frac{|g(w,z)-g(w,z')|\left(\sum_{t=0}^{n}|g(w,z)|^t|g(w,z')|^{n-t}\right)}{|g(w,z)|^{n+1}|g(w,z')|^{n+1}}\\
& \lesssim & \frac{|g(w,z)-g(w,z')|}{|g(w,z)|^{n+2}}\\
& \lesssim & \frac{1}{d(z,w)^{n+1}}\frac{d(z,w)^{1/2}d(z,z')^{1/2}}{d(z,w)}\\
& \lesssim & \left(\frac{d(z,z')}{d(z,w)}\right)^{1/2}\frac{1}{V(B(z,d(z,w)))}
\end{eqnarray*} which establishes the smoothness estimate. 
\end{proof}

\end{lemma}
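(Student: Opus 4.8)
The plan is to reduce all three inequalities to lower bounds on $|g(w,z)|$, combined with the comparability $V(B(z,r)) \approx r^{n+1}$ and the Lanzani--Stein estimates from \cite[Proposition 2.1]{LS2}: $|g(w,z)| \approx |g(z,w)|$, $|g(w,z)| \gtrsim |\rho(w)| + |\rho(z)| + |w-z|^2$, and, for $|w-z|$ small, $g(w,z) = -\rho(w) - P_w(z)$. Since for $|w-z|$ bounded below $|g(w,z)|$ is bounded below and all three assertions are trivial, I would work locally in the special holomorphic coordinates and assume $g(w,z) = -\rho(w) - P_w(z)$ throughout, with all implicit constants uniform in the coordinate patch. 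The size estimate is then immediate: by $|g(w,z)| \approx |g(z,w)|$ it suffices to prove $|g(w,z)|^{-(n+1)} \lesssim V(B(z,d(z,bD)))^{-1} \approx d(z,bD)^{-(n+1)}$, and since $d(z,bD) \approx \operatorname{dist}(z,bD) \approx |\rho(z)| \lesssim |g(w,z)|$, there is nothing more to do. I would also record the auxiliary bound $d(z,w) \lesssim |g(w,z)|$ at this stage: expanding the first special coordinate of $z$ centered at $w$ gives $|\zeta_1| \lesssim |\rho(w)| + |g(w,z)| + |w-z|^2 \lesssim |g(w,z)|$, and since $d(z,w) \approx |\zeta_1| + \sum_{j\geq 2}|\zeta_j|^2$ with $\sum_{j\geq 2}|\zeta_j|^2 \lesssim |w-z|^2 \lesssim |g(w,z)|$ (the coordinate change being biholomorphic), the claim follows. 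This is what makes the denominators $V(B(z,d(z,w)))$ appear in the smoothness estimates.

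For the smoothness estimates --- the substantive part, to be modeled on the Szeg\H{o} error-term computation in Lemma~\ref{kernel difference} --- the heart of the matter is the Lipschitz-type inequality
$$|g(w,z) - g(w,z')| \lesssim d(z,z')^{1/2} d(z,w)^{1/2} + d(z,z').$$
I would split $g(w,z)-g(w,z')$ into the difference of the linear parts, $\langle\partial\rho(w), z'-z\rangle$, plus the difference of the quadratic parts. For the linear part, insert $\langle\partial\rho(z), z'-z\rangle$: the term $\langle\partial\rho(z),z'-z\rangle$ is controlled by $d(z,z')$, being essentially the first special coordinate of $z'$ centered at $z$, while $\langle\partial\rho(w)-\partial\rho(z), z'-z\rangle \lesssim |w-z|\,|z-z'| \lesssim d(z,w)^{1/2} d(z,z')^{1/2}$ by the $C^1$ regularity of $\partial\rho$ (a consequence of the $C^4$ hypothesis on $\rho$) and Proposition~\ref{euclidean bounds}. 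For the quadratic part, a two-step telescoping in each factor bounds each summand by $|w-z|\,|z-z'| + |z-z'|^2 \lesssim d(z,w)^{1/2}d(z,z')^{1/2} + d(z,z')$, giving the claim.

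Granting the Lipschitz bound, I would first bootstrap via the quasi-triangle inequality to obtain $|g(w,z)| \approx |g(w,z')|$ whenever $d(z,w) \geq c\,d(z,z')$ with $c$ large (add $|g(w,z')|$ to $|g(w,z)-g(w,z')|$ and absorb the resulting $(c^{-1/2}+c^{-1})|g(w,z)|$ term), and then telescope
$$\Bigl|\tfrac{1}{g(w,z)^{n+1}} - \tfrac{1}{g(w,z')^{n+1}}\Bigr| \lesssim \frac{|g(w,z)-g(w,z')|}{|g(w,z)|^{n+2}} \lesssim \frac{d(z,z')^{1/2}d(z,w)^{1/2}}{d(z,w)^{n+2}} = \Bigl(\tfrac{d(z,z')}{d(z,w)}\Bigr)^{1/2}\frac{1}{V(B(z,d(z,w)))},$$
using $d(z,w) \lesssim |g(w,z)|$, $V(B(z,d(z,w))) \approx d(z,w)^{n+1}$, and the fact that the tail $d(z,z')$ is dominated by $d(z,w)^{1/2}d(z,z')^{1/2}$ once $d(z,z') \leq c^{-1}d(z,w)$. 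The third estimate is the same argument with the roles of $z$ and $w$ interchanged; the only genuinely new term is $\rho(w) - \rho(w')$, which is handled by the mean value theorem and Proposition~\ref{euclidean bounds}.

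\textbf{Main obstacle.} The delicate point is the bookkeeping of exponents. Because $d$ is anisotropic --- comparable to $|z-w| + |z-w|^2$ --- a first-order Euclidean difference only produces a \emph{square root} of $d$, so one must verify that every error term genuinely carries a positive power of the ratio $d(z,z')/d(z,w)$ (here $1/2$) and, in particular, that the non-homogeneous tail $+d(z,z')$ in the Lipschitz estimate is absorbed by the leading $(d(z,z')/d(z,w))^{1/2}$ gain rather than spoiling it. A secondary, more routine, matter is to keep careful track of where the reduction $g(w,z) = -\rho(w) - P_w(z)$ is legitimate and to confirm uniformity of all implicit constants across the coordinate patches used to build $d$.
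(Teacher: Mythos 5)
Your proposal tracks the paper's own proof essentially step for step on everything the paper writes out in full: the same reduction of the size estimate to $d(z,bD)\lesssim|g(w,z)|$ via $d(z,bD)\approx|\rho(z)|$; the same auxiliary bound $d(z,w)\lesssim|g(w,z)|$ obtained from the normal special coordinate; the same split of $g(w,z)-g(w,z')$ into linear and quadratic parts with $\partial\rho(z)$ inserted into the linear term; the same two-step telescoping for the quadratic part; the same absorption argument yielding $|g(w,z)|\approx|g(w,z')|$; and the same $a^{n+1}-b^{n+1}$ factoring at the end. Your observation that the stray $+\,d(z,z')$ tail is eaten by the $(d(z,z')/d(z,w))^{1/2}$ gain once $d(z,z')\lesssim d(z,w)$ is exactly the point the paper leaves implicit.

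The one place a gap opens is your parenthetical handling of item~3. You say the new term $\rho(w)-\rho(w')$ is handled by the mean value theorem and Proposition~\ref{euclidean bounds}, but taken literally that only yields $|\rho(w)-\rho(w')|\lesssim|w-w'|\lesssim d(w,w')^{1/2}$, which is one power short. With only that bound the absorption step fails ($d(w,w')^{1/2}$ is not a small multiple of $d(z,w)$ even when $d(w,w')\leq c^{-1}d(z,w)$), and even granting $|g(w,z)|\approx|g(w',z)|\gtrsim d(z,w)$ the resulting contribution $d(w,w')^{1/2}/d(z,w)^{n+2}$ exceeds the target $d(w,w')^{1/2}/d(z,w)^{n+3/2}$ by a factor $d(z,w)^{-1/2}$ which blows up near the boundary. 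What you need is $|\rho(w)-\rho(w')|\lesssim d(w,w')$, and it comes from the same device you already used for the linear term in item~2: write $\rho(w)-\rho(w')=2\,\mathrm{Re}\langle\partial\rho(w'),w-w'\rangle+\mathcal{O}(|w-w'|^2)$, note that the linear piece is comparable to the normal special coordinate of $w$ centered at $w'$ and hence $\lesssim d(w,w')$, and control the $\mathcal{O}(|w-w'|^2)$ error by Proposition~\ref{euclidean bounds}. (One should likewise note the differences of the $\partial\rho/\partial w_j$ and $\partial^2\rho/\partial w_j\partial w_k$ coefficients between $w$ and $w'$; these produce harmless extra $|w-w'|$ factors.) Since the paper itself dismisses item~3 with ``proven similarly,'' this is a minor omission, but the mean value theorem alone would not close it.
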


As a consequence of the size and smoothness estimates obtained on the kernel of the positive operator $\Gamma$, we get the following theorem (one can follow the arguments verbatim contained in \cite[Theorem 1.2]{HWW}):

\begin{theorem}\label{Gamma boundedness} For $1<p<\infty$, the operators $\Gamma, \Gamma^*$ map $L^p_\sigma(D)$ to $L^p_\sigma(D)$ boundedly for $\sigma \in B_p$. 
\end{theorem}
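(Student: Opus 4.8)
The plan is to reduce the boundedness of $\Gamma$ and $\Gamma^*$ on $L^p_\sigma(D)$ to the general machinery of Calder\'{o}n--Zygmund operators on a space of homogeneous type, exactly as in \cite[Theorem 1.2]{HWW}. Recall from Section \ref{theproblemBergman} that $(D, d, \mathop{dV})$ is a space of homogeneous type with $V(B(z,r)) \approx r^{n+1}$, and that the $B_p$ class is the Muckenhoupt class built from the ``Carleson tent'' balls $B(w,R)$ with $R > d(w,bD)$. The operator $\Gamma$ has positive kernel $|g(w,z)|^{-(n+1)}$, and Lemma \ref{size and smoothness} furnishes precisely the size bound and the two H\"{o}lder-type smoothness bounds (with H\"{o}lder exponent $1/2$) with respect to the quasi-metric $d$ that are required to call $\Gamma$ a Calder\'{o}n--Zygmund operator in this setting. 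The key structural point, which is what makes the restricted $B_p$ condition (rather than the full $A_p$ condition for all balls) sufficient, is that the size estimate in part (1) of Lemma \ref{size and smoothness} is not the generic $d(z,w)^{-(n+1)}$ but the stronger bound by $\min\{V(B(z,d(z,bD)))^{-1}, V(B(w,d(w,bD)))^{-1}\}$; this means the kernel is harmless unless $d(z,w)$ is comparable to both $d(z,bD)$ and $d(w,bD)$, i.e. unless $z$ and $w$ lie in a common Carleson tent, so only the tent balls ever enter the weighted estimates.

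The steps, in order, would be: (i) verify $L^2(D)$ boundedness of $\Gamma$ — this follows from the size estimate alone via Schur's test with weight $d(z,bD)^{-\beta}$ for suitable small $\beta>0$, or simply by citing that it is contained in \cite{HWW}; (ii) invoke the weighted Calder\'{o}n--Zygmund theory on spaces of homogeneous type (sparse domination, or the classical good-$\lambda$/$A_p$ argument adapted to tents) to conclude that a CZ operator with the above ``tent-localized'' size bound and $d$-H\"{o}lder smoothness is bounded on $L^p_\sigma(D)$ for every $\sigma$ in the tent-$B_p$ class and every $1<p<\infty$; (iii) observe that the kernel of $\Gamma^*$ is $|g(z,w)|^{-(n+1)}$, and since $|g(w,z)| \approx |g(z,w)|$ by \cite[Proposition 2.1]{LS2}, the very same size and smoothness estimates of Lemma \ref{size and smoothness} (with the roles of $z$ and $w$, and of parts (2) and (3), interchanged) apply to $\Gamma^*$, so $\Gamma^*$ is also bounded on $L^p_\sigma(D)$ for $\sigma \in B_p$. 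Finally, deduce Theorem \ref{aux operator}: since $|\mathcal{T}_1 f| \lesssim \Gamma(|f|)$ pointwise, $\mathcal{T}_1$ is bounded on $L^p_\sigma(D)$, and the same domination applied to the kernel of $\mathcal{T}_1^*$ (whose modulus is again $\lesssim |g(z,w)|^{-(n+1)}$, since $\mathcal{N}$ is bounded) gives boundedness of $\mathcal{T}_1^*$; combining with Proposition \ref{bounded kernel weights} for $\mathcal{T}_2$ and $\mathcal{T}_2^*$ yields the full statement.

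The main obstacle — really the only substantive one — is step (ii): one must make sure that the weighted theory genuinely goes through with the \emph{restricted} (tent) $B_p$ condition rather than the full $A_p$ condition over all quasi-metric balls. This is the B\'{e}koll\`{e}--Bonami phenomenon: the correct substitute for ``all balls'' is ``all Carleson tents,'' and one needs the dyadic/stopping-time decomposition to respect this restriction. The point is that the enhanced size estimate in Lemma \ref{size and smoothness}(1) forces the kernel to be supported (in the relevant sense) on pairs $(z,w)$ lying in a common tent, so the Calder\'{o}n--Zygmund decomposition and the associated maximal function (Definition \ref{maximal}, whose $L^p_\sigma$-boundedness for $\sigma \in B_p$ is recorded from \cite{HWW}) only ever see tent balls; hence the tent-$B_p$ condition suffices. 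Since this entire argument is carried out in detail in \cite[Theorem 1.2]{HWW} with exactly the same quasi-metric, it is legitimate here to cite it rather than reproduce it, and the only thing needing checking is that Lemma \ref{size and smoothness} supplies hypotheses identical in form to those used there — which it does, with H\"{o}lder exponent $1/2$.
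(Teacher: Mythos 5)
Your proposal is correct and takes the same route as the paper: once Lemma \ref{size and smoothness} supplies the tent-localized size estimate and the $d$-H\"older smoothness estimates, the paper simply cites \cite[Theorem 1.2]{HWW}, which is exactly the reduction you carry out, and your observation that $\Gamma^*$ inherits the same kernel estimates via $|g(w,z)|\approx|g(z,w)|$ matches the intended symmetry. The extra commentary you give on why the restricted (tent) $B_p$ condition suffices is consistent with the structure of the \cite{HWW} argument, though the paper itself leaves that implicit.
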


Now we can prove Theorem \ref{aux operator} as follows:
\begin{proof}[Proof of Theorem \ref{aux operator}] Note that Theorem \ref{Gamma boundedness} implies the operators $\mathcal{T}_1, \mathcal{T}_1^*$ map $L^p_\sigma(D)$ to $L^p_\sigma(D)$ boundedly, which together with Proposition \ref{bounded kernel weights} establishes the result.
\end{proof}

\subsection{The Error Term}

We now proceed to deal with the ``error term" $\mathcal{T}^*-\mathcal{T}$. In light of the arguments above, we already know $\mathcal{T}^*-\mathcal{T}$ is bounded on $L^p_\sigma(D)$, but in fact this operator exhibits much better behavior.  In analogy with the approach taken in this paper for the Szeg\H{o} operator, we show that this operator is compact on $L^2_\sigma(D)$ for $\sigma \in B_2$ and improves $L^p$ spaces. We conclude by applying the Kerzman-Stein trick to deduce the boundedness of $\mathcal{B}$ from this information. 

\begin{lemma}\label{smoothing kernel size} Let $K(z,w)$ denote the kernel of the integral operator $\mathcal{T}^*-\mathcal{T}$. Then we have the size estimates:

$$|K(z,w)| \lesssim d(z,w)^{-(n+\frac{1}{2})}$$ and

$$|K(z,w)| \lesssim \min\left\{ d(z,bD)^{-(n+\frac{1}{2})},d(w,bD)^{-(n+\frac{1}{2})}\right\}.$$

\begin{proof}This is where the hypothesis that $D$ has $C^4$ boundary is of importance. It is proven in \cite[Theorem 7.6]{R} that  $|K(z,w)| \lesssim |g(w,z)|^{-(n+\frac{1}{2})}$, so using the fact, contained in the proof of Lemma \ref{size and smoothness}, that $d(z,w) \lesssim |g(w,z)|$, we deduce that $|K(z,w)| \lesssim d(z,w)^{-(n+\frac{1}{2})}$. For completeness, we sketch the argument given in \cite{R}.

First, note from \eqref{13} that we can write $N(w,z)=N_0(w,z)+N_1(w,z)$, where $N_0(w,z)= -\left((\bar\partial \eta)^{n-1}) \wedge \bar\partial_w g \wedge \eta \right)$ and $N_1(w,z)=-\left(g(\bar \partial \eta)^n\right).$ Note that $N_0(w,w)=-\left( \partial_w \bar \partial_w \rho \wedge \bar \partial_w \rho \wedge \partial_w \rho\right),$ so in particular $N_0(w,w)$ is a real-valued $(n,n)$ form. Write $N_0(w,z)= \mathcal N_0(w,z) dV(w)$ and $N_1(w,z)= \mathcal{N}_1(w,z) dV(w).$ Moreover, it is clear $\mathcal N_0(w,z)=\mathcal N_0(w,w)+\mathcal{O}(|w-z|)$ by our smoothness assumptions and the same is true of $\mathcal N_0(z,w)$.  Thus, we have, using the fact that $|g(w,z)| \approx |g(z,w)|$ and that the kernel of $\mathcal{T}_2$ is uniformly bounded by a constant $C$:

\begin{eqnarray*}
|K(z,w)| & \lesssim & \left| \frac{ \overline{\mathcal{N}_0(z,w)}}{\overline{g(z,w)}^{n+1}}+\frac{\overline{g(z,w)\mathcal{N}_1(z,w)}}{\overline{g(z,w)}^{n+1}}-\left( \frac{\mathcal{N}_0(w,z)}{g(w,z)^{n+1}}+\frac{g(w,z)\mathcal{N}_1(w,z)}{g(w,z)^{n+1}}\right)\right|+C\\
& \lesssim & \left| \frac{ \overline{\mathcal{N}_0(z,w)}}{\overline{g(z,w)}^{n+1}}- \frac{\mathcal N_0(w,z)}{g(w,z)^{n+1}}\right| +\frac{1}{|g(w,z)|^{n}}\\
& \lesssim & \left| \mathcal{N}_0(w,w)\left(\frac{1}{\overline{g(z,w)}^{n+1}}- \frac{1}{g(w,z)^{n+1}}\right)\right| +\frac{|w-z|}{|g(w,z)|^{n+1}}+\frac{1}{|g(w,z)|^{n}}.
\end{eqnarray*}

Moreover, \cite[Lemma 7.4]{R} gives that $|g(w,z)-\overline{g(z,w)}|= \mathcal{O}(|w-z|^3)$ with an argument very similar to Proposition \ref{diff of Levi}. Then proceeding as in Lemma \ref{kernel difference} and using the fact that $|w-z| \lesssim |g(w,z)|^{1/2}$ yields the desired conclusion.

 The other estimate is proven in the same way, using the fact that $d(z,bD) \lesssim |g(w,z)|$ and $d(w,bD) \lesssim |g(w,z)|$.
\end{proof}

\end{lemma}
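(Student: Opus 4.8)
The plan is to strip $\mathcal{T}^{*}-\mathcal{T}$ down to its genuinely singular part. Since $\mathcal{T}=\mathcal{T}_1+\mathcal{T}_2$ and $K_2$ is uniformly bounded while $D$ is bounded, $\mathcal{T}_2^{*}-\mathcal{T}_2$ has a bounded kernel, and $1\lesssim d(z,w)^{-(n+1/2)}$ because the quasi-metric $d$ is bounded on $\overline D\times\overline D$; so it is enough to control the kernel of $\mathcal{T}_1^{*}-\mathcal{T}_1$. Writing $N(w,z)=\mathcal N(w,z)\,dV(w)$ and using \eqref{12}, this kernel equals $\overline{\mathcal N(z,w)}\,\overline{g(z,w)}^{-(n+1)}-\mathcal N(w,z)\,g(w,z)^{-(n+1)}$, and the whole lemma reduces to the single estimate $|K(z,w)|\lesssim |g(w,z)|^{-(n+1/2)}$: the first assertion then follows from $d(z,w)\lesssim |g(w,z)|$ (established inside the proof of Lemma \ref{size and smoothness}), and the second from $|g(w,z)|\gtrsim |\rho(z)|+|\rho(w)|$ — a consequence of $\text{Re}\, g(w,z)\gtrsim -\rho(w)-\rho(z)+c|w-z|^2$ — together with $|\rho|\approx\operatorname{dist}(\cdot,bD)\approx d(\cdot,bD)$.

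To obtain $|g(w,z)|^{-(n+1/2)}$ I would exploit the splitting $N=N_0+N_1$ from \eqref{13}. The piece $N_1=-g(\bar\partial\eta)^n$ carries an extra factor of $g$, so $\mathcal N_1/g^{n+1}$ is only of size $|g(w,z)|^{-n}$, which is $\lesssim|g(w,z)|^{-(n+1/2)}$ since $|g|$ is bounded on $\overline D\times\overline D$. For $N_0=-\big((\bar\partial\eta)^{n-1}\wedge\bar\partial_w g\wedge\eta\big)$ I would use two structural facts. First, the $C^4$ regularity of $\rho$ forces the coefficient $\mathcal N_0$ to be of class $C^1$, so $\mathcal N_0(w,z)=\mathcal N_0(w,w)+O(|w-z|)$ and likewise $\overline{\mathcal N_0(z,w)}=\mathcal N_0(w,w)+O(|w-z|)$, the diagonal values agreeing. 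Second, $\mathcal N_0(w,w)$ is \emph{real}, because $N_0(w,w)=-\big(\partial_w\bar\partial_w\rho\wedge\bar\partial_w\rho\wedge\partial_w\rho\big)$ is a real $(n,n)$-form. Since $|w-z|\lesssim|g(w,z)|^{1/2}$, the $O(|w-z|)$ discrepancies contribute only $O\big(|w-z|\,|g(w,z)|^{-(n+1)}\big)=O\big(|g(w,z)|^{-(n+1/2)}\big)$, and the problem collapses to estimating $\mathcal N_0(w,w)\big(\overline{g(z,w)}^{-(n+1)}-g(w,z)^{-(n+1)}\big)$.

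The final ingredient is the Bergman analog of Proposition \ref{diff of Levi}, namely $|g(w,z)-\overline{g(z,w)}|\lesssim|w-z|^3$. Its proof is the same Taylor-expansion computation: here $w,z$ lie in the interior so $\rho$ need not vanish, but the extra summand $\rho(z)-\rho(w)$ appearing in $g(w,z)-\overline{g(z,w)}$ cancels exactly against the $\rho(w)-\rho(z)$ produced by the standard expansion $\rho(w)=\rho(z)+2\,\text{Re}\,P_z(w)+L_z(w)+O(|w-z|^3)$. Feeding this into the telescoping identity exactly as in Lemma \ref{kernel difference},
$$\left|\overline{g(z,w)}^{-(n+1)}-g(w,z)^{-(n+1)}\right|\ \lesssim\ \frac{|g(w,z)-\overline{g(z,w)}|}{|g(w,z)|^{n+2}}\ \lesssim\ \frac{|w-z|^3}{|g(w,z)|^{n+2}}\ \lesssim\ |g(w,z)|^{-(n+1/2)},$$
where the middle step uses $|g(w,z)|\approx|g(z,w)|$ and the last uses $|w-z|\lesssim|g(w,z)|^{1/2}$. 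Adding the three contributions yields $|K(z,w)|\lesssim|g(w,z)|^{-(n+1/2)}$, and the two displayed estimates in the statement follow from the comparisons recorded in the first paragraph. I expect the main obstacle to be the bookkeeping of the cancellation between the two conjugate terms: one must peel off the $C^1$ remainder of $\mathcal N_0$, invoke real-valuedness on the diagonal, and apply the cubic bound on $g-\overline g$ simultaneously, and the role of the $C^4$ hypothesis is precisely to make each of these errors of the improved order $|g(w,z)|^{-(n+1/2)}$ rather than the raw order $|g(w,z)|^{-(n+1)}$ of the kernel itself.
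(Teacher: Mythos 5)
Your proof is correct and follows essentially the same route as the paper's: split $N=N_0+N_1$ via \eqref{13}, use $C^1$ regularity and real-valuedness of $\mathcal{N}_0$ on the diagonal to reduce to the cubic bound $|g(w,z)-\overline{g(z,w)}|=\mathcal{O}(|w-z|^3)$, then telescope as in Lemma \ref{kernel difference}. The only cosmetic difference is that the paper delegates the cubic estimate to \cite[Lemma 7.4]{R}, while you spell out the $\rho(z)-\rho(w)$ cancellation explicitly; that explanation is accurate and matches what the cited lemma proves.
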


We have the following lemma concerning the behavior of $B_1$ weights when integrated against this kernel:

\begin{lemma}\label{B_1 control} Let $\sigma \in B_1$. Then we have the following bounds for all $z,w \in D$ and $\delta>0$:

$$\int_{B(z,\delta)} |K(z,w)|\sigma(w) \mathop{d V(w)} \lesssim (\delta^{1/2}+d(z,bD)^{1/2})\sigma(z)$$ and

$$\int_{B(w,\delta)} |K(z,w)|\sigma(z) \mathop{d V(z)} \lesssim (\delta^{1/2}+d(w,bD)^{1/2})\sigma(z)$$

\begin{proof}

By symmetry, it clearly suffices to prove the first assertion. Let $N$ be the largest non-negative integer so that $2^{-N}\delta>d(z,bD)$. If there is no such $N$, make the obvious modifications. We have, integrating over dyadic ``annuli"

\begin{eqnarray*}
\int_{B(z,\delta)} |K(z,w)| \sigma(w) \mathop{d V(w)} & = & \sum_{j=0}^{\infty} \int_{B(z,2^{-j}\delta)\setminus B(z,2^{-(j+1)}\delta)} |K(z,w)| \sigma(w) \mathop{d V(w)}\\
& = & \sum_{j=0}^{N} \int_{B(z,2^{-j}\delta)\setminus B(z,2^{-(j+1)}\delta)} |K(z,w)| \sigma(w) \mathop{d V(w)}\\
& + & \sum_{j=N+1}^{\infty} \int_{B(z,2^{-j}\delta)\setminus B(z,2^{-(j+1)}\delta)} |K(z,w)| \sigma(w) \mathop{d V(w)}.
\end{eqnarray*}

We deal with the first summation first. We have 

\begin{small}

\begin{eqnarray*}
\sum_{j=0}^{N} \int_{B(z,2^{-j}\delta)\setminus B(z,2^{-(j+1)}\delta)} |K(z,w)| \sigma(w) \mathop{d V(w)} & \lesssim & 
\sum_{j=0}^{N} \int_{B(z,2^{-j}\delta)\setminus B(z,2^{-(j+1)}\delta)} d(z,w)^{-(n+1/2)}\sigma(w) \mathop{d V(w)}\\
& \leq & \sum_{j=0}^{N} \int_{B(z,2^{-j}\delta)} 2^{(j+1)(n+1/2)} \delta^{-(n+1/2)}\sigma(w) \mathop{d V(w)}\\
& \lesssim & \sum_{j=0}^{N} \delta^{1/2} 2^{-j/2} \frac{1}{V(B(z,2^{-j}\delta))}\int_{B(z,2^{-j}\delta)} \sigma(w) \mathop{d V(w)}\\
& \leq & \sum_{j=0}^{N} \delta^{1/2} 2^{-j/2} \mathcal{M}(\sigma)(z)\\
& \lesssim & \delta^{1/2} \mathcal{M}(\sigma)(z)\\
& \lesssim & \delta^{1/2} \sigma(z).
\end{eqnarray*}
\end{small}

Note the implicit constant is independent of $N$. We now proceed to deal with the second summation:

\begin{eqnarray*}
\sum_{j=N+1}^{\infty} \int_{B(z,2^{-j}\delta)\setminus B(z,2^{-(j+1)}\delta)} |K(z,w)| \sigma(w) \mathop{d V(w)} & \leq &  \int_{B(z,d(z,bD))} |K(z,w)| \sigma(w) \mathop{d V(w)}\\
& \lesssim & \int_{B(z,d(z,bD))} d(z,bD)^{-(n+1/2)} \sigma(w) \mathop{d V(w)}\\
& = & \frac{d(z,bD)^{1/2}}{V(B(z,d(z,bD)))} \int_{B(z,d(z,bD))} \sigma(w) \mathop{d V(w)}\\
& \leq & d(z,bD)^{1/2} \mathcal{M}(\sigma)(z)\\
& \lesssim & d(z,bD)^{1/2} \sigma(z).
\end{eqnarray*}
This establishes the result.

\end{proof}
\end{lemma}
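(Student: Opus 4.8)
The plan is to handle the first inequality and deduce the second by symmetry, since $\mathcal{T}^*-\mathcal{T}$ has an anti-Hermitian kernel, so $|K(z,w)|=|K(w,z)|$, and the size estimates of Lemma~\ref{smoothing kernel size} are symmetric in $z$ and $w$. After that, I would run the dyadic-annulus decomposition already used in Proposition~\ref{A_1 control}, the only new feature being that the shells must be split at the scale $d(z,bD)$, because the singularity of $K$ is effectively cut off there: away from the boundary one has the decay $|K(z,w)|\lesssim d(z,w)^{-(n+\frac12)}$, while near the boundary one uses the flat bound $|K(z,w)|\lesssim d(z,bD)^{-(n+\frac12)}$, both from Lemma~\ref{smoothing kernel size}.

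Concretely, fix $z\in D$ and $\delta>0$, and write $B(z,\delta)$ as the union of the dyadic shells $B(z,2^{-j}\delta)\setminus B(z,2^{-(j+1)}\delta)$, $j\ge 0$. Let $N$ be the largest non-negative integer with $2^{-N}\delta>d(z,bD)$ (if $\delta\le d(z,bD)$ there is no such $N$ and one argues directly with the boundary estimate). For $j\le N$ one has $d(z,w)\approx 2^{-j}\delta\gtrsim d(z,bD)$ on the shell, so the interior bound applies, and using $V(B(z,r))\approx r^{n+1}$ together with the definition of $\mathcal{M}$ gives
\[
\int_{B(z,2^{-j}\delta)\setminus B(z,2^{-(j+1)}\delta)}|K(z,w)|\sigma(w)\,dV(w)\;\lesssim\;(2^{-j}\delta)^{-(n+\frac12)}\,V\big(B(z,2^{-j}\delta)\big)\,\mathcal{M}\sigma(z)\;\approx\;(2^{-j}\delta)^{1/2}\,\mathcal{M}\sigma(z),
\]
and summing the geometric series $\sum_{j\ge 0}(2^{-j}\delta)^{1/2}\lesssim\delta^{1/2}$ contributes $\delta^{1/2}\mathcal{M}\sigma(z)$. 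The remaining shells ($j>N$) all lie in $B(z,d(z,bD))\subset B(z,2d(z,bD))$, which is an admissible ball for $\mathcal{M}$ since its radius exceeds $d(z,bD)$; on this set I would instead use $|K(z,w)|\lesssim d(z,bD)^{-(n+\frac12)}$ to bound the integral by $d(z,bD)^{-(n+\frac12)}\,V(B(z,2d(z,bD)))\,\mathcal{M}\sigma(z)\approx d(z,bD)^{1/2}\mathcal{M}\sigma(z)$. Adding the two contributions and invoking $\mathcal{M}\sigma(z)\lesssim\sigma(z)$, the defining property of a $B_1$ weight, yields the claimed bound $(\delta^{1/2}+d(z,bD)^{1/2})\sigma(z)$; the second inequality is identical after swapping $z$ and $w$.

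I do not expect a genuine obstacle: this is a routine weighted Schur-type estimate in a space of homogeneous type. The only two points that require care are (i) making sure the ball used to absorb the near-boundary shells has radius exceeding the distance to $bD$, so that it is a legitimate competitor in the maximal function of Definition~\ref{maximal}, and (ii) deploying the correct size bound on $K$ on each range of scales; both size bounds are already supplied by Lemma~\ref{smoothing kernel size}, and the harder, near-boundary one ultimately rests on the $C^4$ hypothesis via \cite{R}.
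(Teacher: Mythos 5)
Your proposal is correct and follows essentially the same route as the paper: split the integral over dyadic shells, break the sum at the index $N$ where the shell radius drops below $d(z,bD)$, use the interior bound $|K(z,w)|\lesssim d(z,w)^{-(n+1/2)}$ on the far shells and the flat bound $|K(z,w)|\lesssim d(z,bD)^{-(n+1/2)}$ on the near-boundary part, convert each term into an average against $\sigma$ over an admissible ball, and finish with $\mathcal{M}\sigma(z)\lesssim\sigma(z)$. The one small refinement you add — enlarging to $B(z,2d(z,bD))$ so the radius strictly exceeds $d(z,bD)$ and the ball is a legitimate competitor in Definition~\ref{maximal} — is a tidy fix of a point the paper glosses over, but it is not a different argument.
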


Now we will engage in a series of arguments very similar to what is proven in the Szeg\H{o} section. We first note that $\mathcal{T}^*-\mathcal{T}$ improves $L^p$ spaces. The proof of this fact is basically identical to that of Proposition \ref{Lp improvement}  and stems from the fact that $\mathcal{T}^*-\mathcal{T}$ has an ``integrable kernel", so we omit it.

\begin{proposition}\label{Lp improvement2}
The operator $\mathcal{T}^*-\mathcal{T}$ maps $L^p(D)$ to $L^{p+\varepsilon}(D)$ boundedly for $p \geq 1$ and $\varepsilon \in [0, \frac{1}{2n+1}).$
\end{proposition}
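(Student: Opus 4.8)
The plan is to imitate the proof of Proposition~\ref{Lp improvement} almost verbatim, the only inputs being the size estimate on the kernel of $\mathcal{T}^*-\mathcal{T}$ from Lemma~\ref{smoothing kernel size} together with the volume growth $V(B(z,r))\approx r^{n+1}$ of the homogeneous space $(D,d,dV)$. The point is that the kernel bound $|K(z,w)|\lesssim d(z,w)^{-(n+1/2)}$ has exponent strictly less than $n+1$, so that powers $|K(z,w)|^{1+\varepsilon}$ remain locally integrable as long as $(n+\tfrac12)(1+\varepsilon)<n+1$, i.e. $\varepsilon<\tfrac{1/2}{n+1/2}=\tfrac{1}{2n+1}$.

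First I would record the analogue of Proposition~\ref{better kernel} in this setting: for $\varepsilon\in[0,\tfrac{1}{2n+1})$ one has
$$\sup_{z\in D}\int_D |K(z,w)|^{1+\varepsilon}\,dV(w)<\infty,\qquad \sup_{w\in D}\int_D |K(z,w)|^{1+\varepsilon}\,dV(z)<\infty.$$
This follows by the same dyadic-annulus argument as in Proposition~\ref{A_1 control} (with $\sigma\equiv 1$): since $D$ is bounded, pick $\delta_0$ with $D\subset B(z,\delta_0)$, decompose $B(z,\delta_0)$ into annuli $B(z,2^{-i}\delta_0)\setminus B(z,2^{-(i+1)}\delta_0)$, bound $|K|^{1+\varepsilon}$ on each annulus by $(2^{-(i+1)}\delta_0)^{-(n+1/2)(1+\varepsilon)}$, multiply by the measure of the annulus which is $\lesssim (2^{-i}\delta_0)^{n+1}$, and sum the resulting geometric series $\sum_i 2^{-i[(n+1)-(n+1/2)(1+\varepsilon)]}$, which converges precisely because the exponent is positive for $\varepsilon<\tfrac{1}{2n+1}$. (Alternatively one may just invoke the second size estimate $|K(z,w)|\lesssim d(z,bD)^{-(n+1/2)}$ to truncate near the boundary, but it is not needed here.) The symmetry of $d$ gives the second supremum for free.

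Next I would run the Minkowski/Hölder argument of Proposition~\ref{Lp improvement} unchanged. For $p=1$ and $f\in L^1(D)$, Minkowski's integral inequality gives
$$\Big(\int_D\Big|\int_D K(z,w)f(w)\,dV(w)\Big|^{1+\varepsilon}dV(z)\Big)^{\frac{1}{1+\varepsilon}}\le \int_D\Big(\int_D|K(z,w)|^{1+\varepsilon}dV(z)\Big)^{\frac{1}{1+\varepsilon}}|f(w)|\,dV(w)\lesssim \|f\|_{L^1(D)}.$$
For $p>1$ one splits $|K|=|K|^{1/p}|K|^{1/q}$, applies Hölder in $w$, uses $\int_D|K(z,w)|\,dV(w)\lesssim 1$ (the $\varepsilon=0$ case), and then applies Minkowski's integral inequality with exponent $1+\varepsilon/p$ against the measures $|f(w)|^p\,dV(w)$ and $dV(z)$, exactly as written in Proposition~\ref{Lp improvement}; the final bound is $\lesssim\|f\|_{L^p(D)}$ using the $w\mapsto z$ version of the displayed supremum with $\varepsilon/p$ in place of $\varepsilon$ (legitimate since $\varepsilon/p<\varepsilon<\tfrac{1}{2n+1}$). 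This yields $\mathcal{T}^*-\mathcal{T}:L^p(D)\to L^{p+\varepsilon}(D)$ boundedly.

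I do not expect any genuine obstacle: the only thing to be careful about is the arithmetic of exponents — checking that the threshold really is $\tfrac{1}{2n+1}$ and that the geometric series in the dyadic decomposition converges (it does, with room to spare, for $\varepsilon$ strictly below the threshold) — and the observation that boundedness of $D$ lets one take the outermost radius finite so there is no contribution from large scales. Since the paper explicitly says the proof is ``basically identical to that of Proposition~\ref{Lp improvement}'', the cleanest write-up simply cites that proof with the kernel exponent $-2n+1$ replaced by $-(n+\tfrac12)$ and the resulting admissible range of $\varepsilon$ recomputed accordingly, and one may legitimately omit the routine calculation.
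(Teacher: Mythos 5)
Your proposal is correct and follows exactly the route the paper intends: the paper omits the proof, remarking only that it is ``basically identical to that of Proposition \ref{Lp improvement}'' and rests on the integrability of the kernel of $\mathcal{T}^*-\mathcal{T}$. Your exponent bookkeeping is right — with $|K(z,w)|\lesssim d(z,w)^{-(n+1/2)}$ and $V(B(z,r))\approx r^{n+1}$, the dyadic-annulus sum converges precisely when $(n+\tfrac12)(1+\varepsilon)<n+1$, i.e.\ $\varepsilon<\tfrac1{2n+1}$ — and the Minkowski/H\"older steps transfer verbatim.
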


The exact same reasoning from Lemma \ref{spectrum} yields the following:

\begin{corollary}\label{spectrum2}
If $\sigma \in B_p$, then $1$ is not an eigenvalue of $\mathcal{T}^*-\mathcal{T}$ considered as an operator on $L^p_\sigma(D)$.
\end{corollary}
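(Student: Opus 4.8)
The plan is to imitate the proof of Lemma~\ref{spectrum} verbatim, replacing the boundary data $(bD,d,d\mu)$ by the interior data $(D,d,dV)$ and replacing the class $A_p$ by $B_p$. Suppose for contradiction that $f\in L^p_\sigma(D)$ is a nonzero eigenfunction, $(\mathcal{T}^*-\mathcal{T})f=f$. The first step is to show $f\in L^1(D)$: by H\"older with exponents $p$ and $q$,
$$\int_D |f|\,dV \le \|f\|_{L^p_\sigma(D)}\left(\int_D \sigma^{-1/(p-1)}\,dV\right)^{1/q}<\infty,$$
using that $\sigma^{-1/(p-1)}\in L^1(D)$ since $\sigma\in B_p$ (take $R$ large enough that $B(w,R)=D$). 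The second step is the bootstrapping: by Proposition~\ref{Lp improvement2}, $\mathcal{T}^*-\mathcal{T}$ maps $L^1(D)$ to $L^{1+\varepsilon}(D)$ for $\varepsilon\in[0,\tfrac{1}{2n+1})$, so $f=(\mathcal{T}^*-\mathcal{T})f\in L^{1+\varepsilon}(D)$; iterating, $f\in L^{1+k\varepsilon}(D)$ for every $k$, hence $f\in L^r(D)$ for all $r\ge 1$, in particular $f\in L^2(D)$.

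The third step is to reach the contradiction on $L^2(D)$. Here I would note that $\ii(\mathcal{T}^*-\mathcal{T})$ is self-adjoint on $L^2(D)$ (since $(\ii(\mathcal{T}^*-\mathcal{T}))^*=-\ii(\mathcal{T}-\mathcal{T}^*)=\ii(\mathcal{T}^*-\mathcal{T})$), so $\mathcal{T}^*-\mathcal{T}$ has purely imaginary spectrum on $L^2(D)$; in particular $1$ is not an eigenvalue of $\mathcal{T}^*-\mathcal{T}$ on $L^2(D)$. (Equivalently: $\langle (\mathcal{T}^*-\mathcal{T})f,f\rangle = \overline{\langle f,(\mathcal{T}^*-\mathcal{T})f\rangle}\cdot(-1)$ forces $\langle (\mathcal{T}^*-\mathcal{T})f,f\rangle$ to be purely imaginary, which is incompatible with $(\mathcal{T}^*-\mathcal{T})f=f$ and $f\neq 0$.) This contradicts $f\in L^2(D)$ being an eigenfunction with eigenvalue $1$, completing the proof.

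The one point that requires a moment's care — and the only place where this proof is not a literal transcription of Lemma~\ref{spectrum} — is the bootstrapping exponent in Proposition~\ref{Lp improvement2}, which is $\tfrac{1}{2n+1}$ rather than $\tfrac{1}{2n-1}$; but since this is a fixed positive number the iteration argument is unaffected. I do not anticipate a genuine obstacle: the argument is purely formal once Proposition~\ref{Lp improvement2} and the self-adjointness of $\ii(\mathcal{T}^*-\mathcal{T})$ are in hand, both of which are available. I would therefore write the proof as: "The proof is identical to that of Lemma~\ref{spectrum}, with Proposition~\ref{Lp improvement2} in place of Proposition~\ref{Lp improvement} and $B_p$ in place of $A_p$; the only additional observation needed is that $\ii(\mathcal{T}^*-\mathcal{T})$ is self-adjoint on $L^2(D)$, so that $1$ is not an eigenvalue there."
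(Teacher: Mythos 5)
Your proposal is correct and is exactly the approach the paper intends: the paper simply states that ``the exact same reasoning from Lemma~\ref{spectrum} yields'' Corollary~\ref{spectrum2}, which is precisely the transcription you spell out (with $L^1$ via the $B_p$ integrability of $\sigma^{-1/(p-1)}$, bootstrapping via Proposition~\ref{Lp improvement2}, and the skew-adjointness of $\mathcal{T}^*-\mathcal{T}$ on $L^2(D)$). Your observation that the improvement exponent changes from $\tfrac{1}{2n-1}$ to $\tfrac{1}{2n+1}$ but this is harmless is also correct.
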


It remains to prove that $\mathcal{T}^*-\mathcal{T}$ is compact on $L^p_\sigma(D)$ for $\sigma \in B_p$. The argument is again a reprise of the reasoning in the preceding section, namely Lemma \ref{compactness}.

\begin{lemma}\label{compactness2} The operator $\mathcal{T}^*-\mathcal{T}$ is compact on  on $L^p_\sigma(D)$ for $\sigma \in B_p$.

\begin{proof}

We first note that an integral operator with kernel $K$ bounded on $D \times D$ is automatically compact on $L^p_\sigma(D)$ for $\sigma \in B_p$; the proof follows as in Theorem \ref{szego main detailed}.

To pass to the case where $K$ is unbounded, let $\delta_j=\frac{1}{j}$ and
$$K_j(z,w)=
\begin{cases}
K(z,w) & d(z,w) \geq \delta_j, d(z,bD) \geq \delta_j \text{ or } d(w,bD) \geq \delta_j\\
0 & \text{otherwise}
\end{cases}.$$

Let $\mathcal{T}_j$ be the integral operator with kernel $K_j$. Note that $K_j$ is bounded on $D \times D$ because $|K(z,w)| \lesssim \frac{1}{|g(w,z)|^{n+1/2}}$ and $|g(w,z)| \gtrsim |\rho(w)|+|\rho(z)|+|z-w|^2$ by \cite[Proposition 2.1]{LS2}. Thus $\mathcal{T}_j$ is compact on $L^p_\sigma(D)$. To show $\mathcal{T}$ is compact, it suffices to show $\mathcal{T}_j \rightarrow \mathcal{T}$ in operator norm.

\vspace{0.25 cm}

To this end, let $f \in L^p_\sigma(D)$ with $||f||_{L^p_\sigma(D)}\leq 1.$ Note that as $\sigma \in B_p$, we can write $$\sigma=\frac{\sigma_1}{\sigma_2^{p-1}}$$ where $\sigma_1,\sigma_2 \in B_1$ by the factorization of $B_p$ weights. This factorization of $B_p$ weights holds by the arguments in \cite{Ru}; note that the adapted maximal function $\mathcal{M}$ is bounded on $L^p_\sigma(D)$ for $\sigma \in B_p$ and if $\sigma \in B_p$, $\sigma^{-\frac{1}{p-1}} \in B_q$ where $p^{-1}+q^{-1}=1$, so \cite[Theorem 2]{Ru} can be applied. It should also be noted that this factorization appears in the literature in the context of the unit disk $\mathbb{D}$; see \cite{Bori}. By H\"{o}lder's Inequality applied to the functions $$|K(z,w)-K_j(z,w)|^{1/q}\sigma_2(w)^{1/q} \quad \text{and} \quad |K(z,w)-K_j(z,w)|^{1/p}\sigma_2(w)^{-1/q}|f(w)|$$ and then applying Proposition \ref{B_1 control}, we obtain the estimate:

\begin{small}
\begin{align*}
& |(\mathcal{T}-\mathcal{T}_j)(f)(z)|\\
& \leq  \int_{bD}|K(z,w)-K_j(z,w)||f(w)| \mathop{d V(w)}\\
 & = \chi_{d(z,bD) < \delta_j}\left(\int_{B(z, \delta_j)} |K(z,w)| \sigma_2(w) \mathop{d V(w)}\right)^{\frac{1}{q}} \left(\int_{B(z,\delta_j) \cap\{d(w,bD)<\delta_j\}} |K(z,w)| (\sigma_2(w))^{1-p}|f(w)|^p \mathop{d V(w)} \right)^{\frac{1}{p}}\\
& \lesssim \delta_j^{1/2q} \sigma_2(z)^{\frac{1}{q}} \left(\int_{B(z,\delta_j)\cap\{d(w,bD)<\delta_j\}}  |K(z,w)| (\sigma_2(w))^{1-p}|f(w)|^p \mathop{d V(w)} \right)^{\frac{1}{p}}.\\
\end{align*}
\end{small}

Thus, we obtain, applying the proceeding estimate, Fubini, and Proposition \ref{B_1 control} again: 

\begin{align*}
& ||(T-T_j)f||^p_{L^p_\sigma(bD)}\\
 & \leq \int_{D} \delta_j^{\frac{p}{2q}} \sigma_2(z)^{p-1} \left(\int_{B(z,\delta_j)\cap\{d(w,bD)<\delta_j\}} |K(z,w)| (\sigma_2(w))^{1-p} |f(w)|^p \mathop{d V(w)} \right)\frac{\sigma_1(z)}{\sigma_2(z)^{p-1}} \mathop{d V(z)}\\
 & =  \delta_j^{\frac{p}{2q}} \int_{D} \int_{B(z,\delta_j)\cap\{d(w,bD)<\delta_j\}} |K(z,w)| (\sigma_2(w))^{1-p} |f(w)|^p \mathop{d V(w)} \sigma_1(z)\mathop{d V(z)}\\
& =  \delta_j^{\frac{p}{2q}} \int_{D} \chi_{d(w,bD)<\delta_j}(w) \left(\int_{B(w,\delta_j)} |K(z,w)| \sigma_1(z)\mathop{d V(z)}\right) |f(w)|^p  (\sigma_2(w))^{1-p}\mathop{d V(w)}\\
& \lesssim  \delta_j^{p/2} \int_{D} \sigma_1(w) |f(w)|^p  (\sigma_2(w))^{1-p}\mathop{d V(w)}\\
 & =  \delta_j^{p/2} ||f||^p_{L^p_\sigma(D)}\\
&  \leq  \delta_j^{p/2}.
\end{align*}

Letting $j \rightarrow \infty$, we have $\delta_j \rightarrow 0$ and thus it immediately follows that the operators $\mathcal{T}_j$ approach $\mathcal{T}$ in operator norm and hence $\mathcal{T}$ is compact.

\end{proof}

\end{lemma}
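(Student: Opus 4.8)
The plan is to mimic, in the interior setting, the proof of Lemma~\ref{compactness} from the Szeg\H{o} section, with the size estimates of Lemma~\ref{smoothing kernel size} and the weighted averaging bounds of Lemma~\ref{B_1 control} as the two analytic inputs.

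The first step is the bounded-kernel case: if $L(z,w)$ is measurable with $\|L\|_{L^\infty(D\times D)}\le M$, then for $\sigma\in B_p$ the operator with kernel $L$ is compact on $L^p_\sigma(D)$. Indeed, writing $d\sigma=\sigma\,dV$, the kernel of this operator relative to $d\sigma$ is $\widetilde L(z,w)=L(z,w)\sigma(w)^{-1}$, and the Eveson double norm
\[
\int_D\Big(\int_D |\widetilde L(z,w)|^q\,d\sigma(w)\Big)^{p/q}d\sigma(z)\ \le\ M^p\,\|\sigma\|_{L^1(D)}\,\|\sigma^{-1/(p-1)}\|_{L^1(D)}^{p-1}
\]
is finite, since applying the $B_p$ condition to a ball containing $\overline D$ forces both $\sigma$ and $\sigma^{-1/(p-1)}$ to lie in $L^1(D)$; finiteness of this double norm yields compactness on $L^p_\sigma(D)$ (cf.\ \cite{Eveson}).

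Next I would truncate the kernel $K$ of $\mathcal{T}^*-\mathcal{T}$. Set $\delta_j=1/j$ and let $K_j=K$ on the set where at least one of $d(z,w)$, $d(z,bD)$, $d(w,bD)$ exceeds $\delta_j$, and $K_j=0$ otherwise. On the support of $K_j$ the two inequalities of Lemma~\ref{smoothing kernel size} give $|K_j|\lesssim \delta_j^{-(n+1/2)}$, so $K_j$ is bounded and the associated operator $\mathcal{T}_j$ is compact by the first step. Since the compact operators form a closed subspace of the bounded operators on $L^p_\sigma(D)$, it suffices to prove $\|\mathcal{T}_j-(\mathcal{T}^*-\mathcal{T})\|_{L^p_\sigma\to L^p_\sigma}\to 0$ as $j\to\infty$. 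For this I would factor $\sigma=\sigma_1\sigma_2^{-(p-1)}$ with $\sigma_1,\sigma_2\in B_1$ (the Rubio de Francia factorization, which applies here because $\mathcal{M}$ is bounded on $L^p_\sigma(D)$ for $\sigma\in B_p$ by \cite{HWW} and $\sigma^{-1/(p-1)}\in B_q$; see \cite{Ru}). On the region where $K-K_j=K$ is nonzero one has $d(z,w)<\delta_j$, $d(z,bD)<\delta_j$, and $d(w,bD)<\delta_j$ simultaneously; applying H\"older in $w$ to the split $|K|^{1/q}\sigma_2^{1/q}\cdot|K|^{1/p}\sigma_2^{-1/q}|f|$, invoking Lemma~\ref{B_1 control} on the first factor (which, since $d(z,bD)<\delta_j$, contributes $\lesssim \delta_j^{1/(2q)}\sigma_2(z)^{1/q}$), raising to the $p$-th power, integrating against $\sigma\,dV$, applying Fubini, and then applying Lemma~\ref{B_1 control} once more in the $z$ variable against $\sigma_1$, produces $\|(\mathcal{T}_j-(\mathcal{T}^*-\mathcal{T}))f\|_{L^p_\sigma(D)}^p\lesssim \delta_j^{p/2}\|f\|_{L^p_\sigma(D)}^p$. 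Letting $j\to\infty$ completes the argument.

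I expect the main obstacle to be organizational rather than conceptual: arranging the H\"older split so that the first application of Lemma~\ref{B_1 control} can be made in $w$ and the second in $z$ after Fubini, keeping the $\sigma_1/\sigma_2$ exponents aligned, and --- crucially --- making sure the restricted domains of integration (to $B(z,\delta_j)\cap\{d(w,bD)<\delta_j\}$ and its dual) persist through Fubini so that the decay factors $\delta_j^{1/2}$ genuinely appear. The kernel size bounds themselves (Lemma~\ref{smoothing kernel size}, which is where the $C^4$ hypothesis enters via \cite{R}) and the factorization theorem are taken as given.
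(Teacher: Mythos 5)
Your proposal is correct and follows essentially the same route as the paper: the Eveson double-norm argument for bounded kernels, the same three-condition truncation $K_j$, the Rubio de Francia factorization $\sigma=\sigma_1\sigma_2^{-(p-1)}$ into $B_1$ weights, and the same H\"older/Fubini argument with Lemma \ref{B_1 control} applied once in $w$ and once in $z$ to extract the factor $\delta_j^{p/2}$. The only cosmetic difference is that you bound $K_j$ using the two size estimates of Lemma \ref{smoothing kernel size}, whereas the paper quotes $|K(z,w)|\lesssim |g(w,z)|^{-(n+1/2)}$ together with $|g(w,z)|\gtrsim|\rho(w)|+|\rho(z)|+|z-w|^2$ directly; these are equivalent.
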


\subsection{Proof of Main Theorem}

We now can finally prove Theorem \ref{bergman main detailed}, using the Kerzman-Stein operator equation trick.

\begin{proof}[Proof of Theorem \ref{bergman main detailed}]
The proof is virtually identical to that of Theorem \ref{szego main detailed}. Again, the starting point is the Kerzman-Stein equation, and the invertibility of $(I-(\mathcal{T}^*-\mathcal{T}))$ on $L^p_\sigma(D)$ is granted by Corollary \ref{spectrum2} and Lemma \ref{compactness2} using the spectral theorem. The boundedness of $\mathcal{T}$ on $L^p_\sigma(D)$ is given by Theorem \ref{aux operator}.

\end{proof}

\begin{section}{Acknowledgements}
The authors would like to thank Cody Stockdale for helpful discussions related to this work.
\end{section}


\begin{thebibliography}{99}
\bibitem {Bek1}B\'{e}koll\`{e}, David; Bonami, Aline. \emph{In\'{e}galit\'{e}s \`{a} poids pour le noyau de Bergman}. (French). C. R. Acad. Sci. Paris Sér. A-B \textbf{286} (1978), no. 18, A775-A778.

\bibitem{Bek2} B\'{e}koll\`{e}, David. In\'{e}galit\'{e} \`{a} poids pour le projecteur de Bergman dans la boule unit\'{e} de $\mathbb{C}^n$. (French). Studia Math. \textbf{71} (1981/82), no. 3, 305-323.

\bibitem{Bori} Borichev, Alexander. \emph{On the B\'{e}koll\`{e}-Bonami condition}. Math. Ann. \textbf{328} (2004), no. 3, 389-398.

\bibitem{Eveson} Eveson, S. P. \emph{Compactness criteria for integral operators in $L^\infty$ and $L^1$ spaces}. Proc. Amer. Math. Soc. \textbf{123} (1995), no. 12, 3709-3716.

\bibitem{HMW} Hunt, Richard; Muckenhoupt, Benjamin; Wheeden, Richard. \emph{Weighted norm inequalities for the conjugate function and Hilbert transform}. Trans. Amer. Math. Soc. \textbf{176} (1973), 227-251. 

\bibitem{HWW} Huo, Zhenghui; Wagner, Nathan A.; Wick, Brett D. \emph{A B\'{e}koll\`{e}-Bonami class of weights for certain pseudoconvex domains}. To Appear in Journal of Geometric Analysis. 

\bibitem{HWW2} Huo, Zhenghui; Wagner, Nathan A.; Wick, Brett D. \emph{B\'{e}koll\`{e}-Bonami estimates on some pseudoconvex domains}. (preprint). https://arxiv.org/abs/2001.07868

\bibitem{KS1} Kerzman, Norberto; Stein, Elias M. \emph{The Cauchy kernel, the Szeg\H{o} kernel, and the Riemann mapping function}, Math. Ann. \textbf{236} (1978), no. 1, 85-93. 

\bibitem{KS2} Kerzman, Norberto; Stein, Elias M. \emph{The Szeg\H{o} kernel in terms of Cauchy-Fantappi\'e Kernels}, Duke Math J \textbf{45} (1978), no. 2, 197-224. 

\bibitem{LS1}Lanzani, Loredana; Stein, Elias M. \emph{The Cauchy-Szeg\H{o} Projection for domains in $\mathbb{C}^n$ with minimal smoothness}, Duke Math J \textbf{166}(2017), no.1, 125-176. 

\bibitem{LS2}Lanzani, Loredana; Stein, Elias M. \emph{The Bergman Projection in $L^p$ for domains with minimal smoothness}, Illinois J Math \textbf{56}(2017), no.1, 127-154.

\bibitem{LS3}Lanzani, Loredana; Stein, Elias M. \emph{Hardy spaces of holomorphic functions for domains in $\mathbb{C}^n$ with minimal smoothness}. Harmonic analysis, partial differential equations, complex analysis, Banach spaces, and operator theory. Vol. 1, 179–199, Assoc. Women Math. Ser., 4, Springer, [Cham], 2016.

\bibitem{LS4} Lanzani, Loredana; Stein, Elias M. \emph{The Cauchy integral in $\mathbb{C}^n$ for domains with minimal smoothness}. Adv. Math. \textbf{264} (2014), 776–830.

\bibitem{Lig1} Ligocka, Ewa. \emph{The H\"{o}lder continuity of the Bergman projection and proper holomorphic mappings}. Studia Math. \textbf{80} (1984), no. 2, 89–107.

\bibitem{Lig2} Ligocka, Ewa. \emph{On the Forelli-Rudin construction and weighted Bergman projections}. Studia Math. \textbf{94} (1989), no. 3, 257–272. 

\bibitem{Mc1} McNeal, Jeffery D. \emph{The Bergman projection as a singular integral operator}. J. Geom. Anal. \textbf{4} (1994), no. 1, 91-103.


\bibitem{Mc2} McNeal, Jeffery D. \emph{Subelliptic estimates and scaling in the $\overline{\partial}$-Neumann problem}. Explorations in complex and Riemannian geometry, 197-217, Contemp. Math., \textbf{332}, Amer. Math. Soc., Providence, RI, 2003.

\bibitem{Nik} Nikolski, Nikolai K. \emph{Operators, functions, and systems: an easy reading. Vol. 1. Hardy, Hankel, and Toeplitz}. Translated from the French by Andreas Hartmann. Mathematical Surveys and Monographs, \textbf{92}. American Mathematical Society, Providence, RI, (2002). 


\bibitem{PS} Phong, Duong H.; Stein, Elias M. \emph{Estimates for the Bergman and Szeg\H{o} projections on strongly pseudo-convex domains}. Duke Math. J. \textbf{44} (1977), no. 3, 695-704.


\bibitem{R} Range, R. Michael. \emph{Holomorphic functions and integral representations in several
complex variables}, Springer Verlag, Berlin, (1986).

\bibitem{Ru}Rubio de Francia, Jos\'{e} L. \emph{Factorization Theory and $A_p$ Weights}, Am J Math \textbf{106}(1984), no.3, 533-547.

\bibitem{Tao}Tao, Terence. Lecture notes 8 for 247B. http://www.math.ucla.edu/tao/247b.1.07w/notes8.dvi.
\end{thebibliography}
\end{document}